\documentclass{article}
\usepackage[utf8]{inputenc}
\usepackage[algonl,boxed,norelsize,lined]{algorithm2e}
\usepackage{amscd,amsmath,amsthm,amssymb,amsfonts}
\usepackage{fullpage}
\usepackage{comment}
\usepackage[all]{xy}
\usepackage{datetime}
\usepackage{color}
\usepackage[english]{babel}
\usepackage[T1]{fontenc}
\usepackage{hyperref}
\usepackage{enumitem}
\usepackage{theoremref}

\usepackage{booktabs}

\usepackage[a4paper,top=2.5cm,bottom=2cm,left=2.5cm,right=2.5cm,marginparwidth=1.75cm]{geometry}

\usepackage{amsmath}
\usepackage{amssymb}
\usepackage{amsthm}
\usepackage{verbatim}

\usepackage{tikz}
\usetikzlibrary{decorations.pathreplacing}
\usetikzlibrary{calc}
\usetikzlibrary{patterns}

\newlength{\bibitemsep}\setlength{\bibitemsep}{.2\baselineskip plus .05\baselineskip minus .05\baselineskip}
\newlength{\bibparskip}\setlength{\bibparskip}{0pt}
\let\oldthebibliography\thebibliography
\renewcommand\thebibliography[1]{%
  \oldthebibliography{#1}%
  \setlength{\parskip}{\bibitemsep}%
  \setlength{\itemsep}{\bibparskip}%
}

\newtheorem{Theorem}{Theorem}[section]

\newtheorem{Lemma}[Theorem]{Lemma}
\newtheorem{Corollary}[Theorem]{Corollary}
\newtheorem{Proposition}[Theorem]{Proposition}
\newtheorem{Remark}[Theorem]{Remark}
\newtheorem{Definition}[Theorem]{Definition}
\newtheorem{Example}[Theorem]{Example}

\newtheorem{Conjecture}[Theorem]{Conjecture}

\numberwithin{equation}{section}

\DeclareMathOperator{\init}{in}
\DeclareMathOperator{\prin}{prin}
\DeclareMathOperator{\Gr}{Gr}
\DeclareMathOperator{\fr}{fr}
\DeclareMathOperator{\GF}{GF}
\DeclareMathOperator{\GR}{GR}
\DeclareMathOperator{\Mu}{M}
\DeclareMathOperator{\gr}{gr}

\DeclareMathOperator{\up}{up}
\DeclareMathOperator{\val}{val}
\DeclareMathOperator{\trop}{Trop}

\DeclareMathOperator{\spec}{Spec}
\DeclareMathOperator{\proj}{Proj}

\newcommand{\gv}{\mathbf{g}}

\newcommand{\RR}{\mathbb{R}}
\newcommand{\BB}{\mathbb{B}}

\DeclareMathOperator{\Flag}{\mathcal{F}\hspace{-1.6pt}\ell}

\definecolor{caribbeangreen}{rgb}{0.0, 0.8, 0.6}
\definecolor{capri}{rgb}{0.0, 0.75, 1.0}

\setcounter{tocdepth}{2}

\title{Tropical totally positive cluster varieties}
\author{Lara Bossinger\\
\small  \url{lara@im.unam.mx}\\
\small 
Universidad Nacional Autónoma de México\\
\small  Instituto de Matemáticas Unidad Oaxaca,
\small  León 2 altos, 68020 Oaxaca, México\\
}

\begin{document}

\maketitle

\begin{abstract}
We study the relation between the integer tropical points of a cluster variety (satisfying the full Fock--Goncharov conjecture) and the totally positive part of the tropicalization of an ideal presenting the corresponding cluster algebra.
Suppose we are given a presentation of the cluster algebra by a Khovanskii basis for a collection of $\gv$-vector valuations associated with several seeds related by mutations.
In presence of a full rank fully extended exchange matrix we construct the rays of a subfan of the totally positive part of the tropicalization of the ideal that coincides combinatorially with the subgraph of the exchange graph of the cluster algebra corresponding to the collection of seeds.
Moreover, geometric information about Gross--Hacking--Keel--Kontsevich's toric degenerations associated with seeds gets identified with the Gröbner toric degenerations obtained from maximal cones in the tropicalization.
    
As application we prove a conjecture about the relation between Rietsch--Williams' valuations for Grassmannians arising from plabic graphs \cite{RW17} to Kaveh--Manon's work on valuations from the tropicalization of an ideal \cite{KM16}. 
In a second application we give a partial answer to the question if the Feigin--Fourier--Littelmann--Vinberg degeneration of the full flag variety in type $\mathtt A$ is isomorphic to a degeneration obtained from the cluster structure. 
\end{abstract}

\section{Introduction}

Cluster varieties are the geometric analogues of Fomin--Zelevinyky's cluster algebras \cite{FZ02,FZ03,FZ07} that were introduced by Fock and Goncharov \cite{FG_ensem}.
For cluster algebras there is a duality between \emph{seed patterns} (consisting of cluster variables) and \emph{Y-patterns} (consisting of $y$-variables). 
Similarly cluster varieties arise in pairs: $\mathcal A$-varieties are unions of tori with a system of local coordinates given by a seed pattern, while $\mathcal X$-varieties are unions of tori with a system of local coordinates given by a $Y$-pattern.

In this paper, we are interested in projective varieties $Y$ that are minimal models of an $\mathcal A$-variety. 
We assume the corresponding seed pattern has frozen directions and no coefficients\footnote{Following \cite{BFMN} we systematically distinguish these two notions and refer to page 2 in {\it loc.cit.} for an explanation.}. 
Moreover, (up to codimension 2) the {\it boundary}, {\it i.e.} the difference between the projective variety $Y$ and the cluster variety $\mathcal A$, should correspond to the vanishing locus of the frozen variables (as in \cite[Construction B.1]{GHKK14}).
The cluster algebra is a subalgebra of the ring of regular functions on $\mathcal A$ by the Laurent phenomenon \cite{FZ02}.
In this setting there are (at least) two ways of tropicalizing $Y$:
\begin{itemize}
    \item[T1] The {\bf tropicalization of an ideal} $J$ presenting $Y$ in a given embedding, which is a distinguished polyhedral subfan of Mora--Robbiano's Gröbner fan of $J$ \cite{MoraRobbiano}, see \S~\ref{sec:trop}; for $J\subset k[x_1,\dots,x_N]$ homogeneous the Gröbner fan is a complete fan in $\mathbb R^N$ whose maximal cones correspond to monomial initial ideals obtained from $J$. The tropicalization is a subfan of dimension $\dim Y+1$.
    \item[T2] The {\bf Fock--Goncharov tropicalization} of the (compactified) cluster variety which corresponds to the rational points of $Y$ over a given semifield, for example the integers with operations addition and maximum, denoted $\mathbb Z^T$, see \S~\ref{sec:p*}.
\end{itemize}
Total positivity plays a role in both types of tropicalization: if $J$ is a homogeneous ideal in $\mathbb R[x_1,\dots,x_N]$ the tropicalization $\trop(J)$ contains a closed subfan known as its {\bf totally positive part} (it consists of those weight vectors $w\in \mathbb R^N$ for which the initial ideal of $J$ is totally positive, see Definition~\ref{def:totally pos}).
The Fock--Goncharov tropicalization of a cluster variety is a special case of tropicalization for schemes with a {\bf positive atlas} \cite{FG_ensem}, {\it i.e.} schemes glued from patches ({\it e.g.} affine spaces or algebraic tori) whose transition functions are substraction free in the local coordinates which allows to consider rational points with respect to semifields.
The aim of this paper is to shed some light on how the two notions of total positivity and tropicalization are related. 

Throughout we assume the $\mathcal A$-variety satisfies the {\it full Fock--Goncharov conjecture} \cite[\S1]{GHKK14}, so that the associated (ordinary) cluster algebra $A$ has a basis of theta functions.
Typically $A$ is the homogeneous coordinate ring of the projective variety $Y$.
Our results may be generalized to section rings of certain line bundles compatible with the boundary $Y\setminus \mathcal A$.
More details on the relevant notion of compatibility are provided in forthcoming work \cite{BCMN_NO}.
\medskip

{\bf Cluster ensemble maps.}
In their fourth paper \cite{FZ07} Fomin and Zelevinsky show that a $Y$-pattern can be obtained from a cluster pattern by considering certain monomials in the {\it cluster variables} ({\it i.e.} elements of a cluster seed), called {\it $\hat y$-variables}.
They work without {\it frozen directions}, which are typically present in geometrically interesting classes of cluster algebras.

Before proceeding we need some notation for cluster algebras.
Let $\Gamma$ denote the fixed data necessary to determine a cluster algebra $A(\Gamma)$. 
It consists of a lattice $N\cong \mathbb Z^{n+m}$ endowed with a skew-symmetric bilinear. 
Here $n$ is the number of mutable directions and $m$ is the number of frozen ones.
Let $I$ be the index set of directions, with $I_{\rm mut}\subset I$ the subset of mutable directions, so $n=|I_{\rm mut}|$.
$N^\circ\subset N$ denotes a saturated sublattice of finite index and let $M$ resp. $M^\circ$ be the dual lattices (Definition~\ref{def:fixed_data}). 
A seed $s$ is a basis $\{e_{i;s}:i\in I\}$ for $N$ that induces a basis $\{f_{i;s}:i\in I\}$ for $M^\circ$.
Let $\tilde B_s=(b_{ij})\in \mathbb Z^{(n+m)\times n}$ be the exchange matrix of $s$.
Then the $i$th $\hat y$-variable is defined as $\hat y_{i;s}=\prod_{j=1}^{n} A_{j;s}^{b_{ji}}$ for $i\le n$ mutable, where $A_{j;s}:=z^{f_{j;s}}$.
To extend to frozen directions we need to complete $\tilde B_s$ to a {\bf fully extended exchange matrix} $\widetilde{B_s}\in \mathbb Z^{(n+m)^2}$. 
The $m\times m$-submatrix corresponding to rows and columns of frozen directions may be chosen arbitrarily.
Any matrix of form $\widetilde{B_s}$ defines a cluster ensemble lattice map $p^*:N\to M^\circ$, see \S\ref{sec:p*}.
These maps are crucial to the theory as they induce morphism between the $\mathcal A$- and $\mathcal X$-varieties.
The $\mathcal A$-variety associated with $\Gamma$, denote it $\mathcal A_\Gamma$, is glued from tori $T_{N^\circ;s'}$ while the $\mathcal X$-variety $\mathcal X_\Gamma$ is obtained from several copies of $T_{M;s'}$. 
The seed $s'$ in the index indicates that there is one torus for each seed $s'$ related to $s$ by mutation (Definition~\ref{def:mut seed data}).
The lattice map $p^*:N\to M^\circ$ correspond to the pullback of a map of tori $T_{N^\circ}\to T_M$ which extends to give a morphism $p:\mathcal A_\Gamma \to \mathcal X_\Gamma$ called a {\bf cluster ensemble map}.
A seed $s$ gives local coordinates to the cluster varieties:  $A_{i;s}=z^{f_{i;s}}$ for $\mathcal A_{\Gamma}$ and $X_{i;s}:=z^{e_{i;s}}$ for $\mathcal X_\Gamma$, $i\in I$.
Fomin--Zelevisnky's $\hat y$-variables are generalized by the pullbacks of $\mathcal X$-coordinates along the cluster ensemble map $p^*(X_{i;s})=\prod_{j=1}^{n+m}A_{j;s}^{b_{ji}}$.
\medskip

If a cluster ensemble map is given by a lattice isomorphism (so $\widetilde{B_s}$ is unimodular) then we may consider the variables $(p^*)^{-1}(A_{i;s})$.
By the positivity of the Laurent phenomenon these yield a {\it positive parametrization} of $\mathcal A_\Gamma$ and $Y$\footnote{By a positive parametrization we mean a system of coordinates for $\mathcal A_\Gamma$ (and $Y$) with the property that all cluster variables can be expressed as Laurent polynomials with non-negative coefficients in the given coordinates.} that is extremely useful in applications.
One example of such an application is Speyer and Williams paper \cite{SW05} in which they use a positive parametrization for the Grassmannian previously defined by Postnikov \cite{Pos06}. 
Fixing an initial seed with good combinatorial properties they compute the expressions of Plücker coordinates in $\hat y$-variables.
The tropicalization of each expression yields a fan in $\mathbb R^n$ ($n$ being the number of mutable directions).
Let $F$ denote the common refinement of all fans obtained this way.
The main result of their paper is that for the Grassmannian of planes $F$ coincides combinatorially with the exchange graph of the cluster algebra and with the positive part of the tropicalization of the Plücker ideal, also known as the {\it tropical Grassmannian} \cite{SS04}.
The latter has a fan structure induced by the Gröbner fan of the Plücker ideal.
Beyond the Grassmannian of planes $F$ is combinatorially different from the exchange graph and from the positive part of the tropicalization.
This paper may be seen as a far generalization of Speyer and Williams work with emphasis on the geometric data given by toric degenerations that can be associated with both, cones in the tropicalization and seeds. 

\medskip
{\bf Toric degenerations.} We use cluster ensemble maps to compute rays of (the totally positive part of) the tropicalization of an ideal presenting a cluster algebra. 
More precisely, we are interested in toric degenerations of $Y$, {\it i.e.} flat morphisms $\pi:\mathcal Y\to \mathbb A^1$ with $\pi^{-1}(1)\cong Y$ the {\bf generic fibre} and $\pi^{-1}(0)$ a projective toric variety called the {\bf central fibre}.
Toric degenerations can be obtained from either type of tropicalization:
\begin{itemize}
    \item[T1] A maximal cone in $\trop(J)$ whose initial ideal is binomial and prime defines a {\bf Gröbner toric degeneration} of $Y$; The main challenge of this approach is to compute the fan $\trop(J)$, in particular its rays are generally hard to get.
    \item[T2] For every seed $s$ of $A(\Gamma)$ there exists a full rank valuation $\gv_s:A(\Gamma)\setminus\{0\}\to \mathbb Z^{n+m}$ with finitely generated value semigroup that defines a toric degeneration of $Y$ in the sense of \cite{An13,GHKK14}. The {\bf Newton--Okounkov polytope} of the valuation defines the projective toric variety that is the central fiber of the degeneration (more details in \S\ref{sec:val}).
\end{itemize} 

To be more precise, we need to consider the fixed data $\Gamma^\vee$ that is {\it Langlands dual} to $\Gamma$, see Definition~\ref{def:Langlands dual}.
If the full Fock--Goncharov conjecture holds for $\mathcal A_\Gamma$ then the Fock--Goncharov tropicalization of $\mathcal X_{\Gamma^\vee}$, denoted $\mathcal X_{\Gamma^\vee}(\mathbb Z^T)$, indexes a theta basis $\{\vartheta_q:q\in\mathcal X_{\Gamma^\vee}(\mathbb Z^T)\}$ of regular functions on $\mathcal A_\Gamma$.
Notice that $A(\Gamma)$ is contained in the ring of regular functions on $\mathcal A_\Gamma$ by the Laurent phenomenon.
Given a seed $s$ we may identify $\mathcal X_{\Gamma^\vee}(\mathbb Z^T)\equiv \mathbb Z^{n+m}$ and the association $\vartheta_q\overset{s}{\leftrightarrow} q\in \mathbb Z^{n+m}$ induces the valuation $\gv_s:A(\Gamma)\setminus \{0\}\to \mathbb Z^{n+m}$. 
The images of cluster variables under the valuation $\gv_s$ are precisely Fomin--Zelevinsky's $\gv$-vectors. 
The geometric information ({\it i.e.} the Newton--Okounkov polytope and the toric degeneration) of the valuation is captured by a {\bf Khovanskii basis}, {\it i.e.} a set of algebra generators $b_1,\dots,b_r\in A(\Gamma)$ such that $\gv_s(b_1),\dots,\gv_s(b_r)$ generate the image of $\gv_s$ as a semigroup.
Given a seed $s$ and a Khovanskii basis $\mathcal B$ for $\gv_s$ we denote by $G_{s;\mathcal B}$ the matrix whose columns are $\gv_s(b)$ for $b\in \mathcal B$.
A Khovanskii basis further determines a presentation of $A(\Gamma)\cong k[x_1,\dots,x_r]/J_{\mathcal B}$ induced by $b_i\mapsto x_i$.
Our main result can now be stated as follows.

\begin{Theorem}\label{thm:main}
Fix an initial seed $s$ and a set $\mathcal B:=\{A_1,\dots,A_N\}$ of cluster variables in $A(\Gamma)$ containing the cluster variables $A_{i;s}$ and $A_{i;\mu_k(s)}$ for all $i\in I$ and $k\in I_{\rm mut}$.
Assume  $\mathcal B$ is a Khovanskii basis for $\gv_s$ and all $\gv_{\mu_k(s)}, k\in I_{\rm mut}$ simultaneously.
Then for every full rank fully extended exchange matrix $\widetilde{B_s}$ the rows of the matrix
\[
-\widetilde{B_s}^{-T}G_{s;\mathcal B}
\]
corresponding to mutable directions 
are rays of a maximal prime cone in the totally positive part of the tropicalization $\trop^+(J_{\mathcal B})$. 
\end{Theorem}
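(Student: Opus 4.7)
My strategy is to identify the indicated rows with the ray generators, modulo lineality, of the Kaveh--Manon prime cone in $\trop(J_{\mathcal{B}})$ attached to the valuation $\gv_s$, and to establish total positivity along the way. I would first unpack the matrix: writing $u_i := -\widetilde{B_s}^{-1}e_i \in N$, the $i$th row of $-\widetilde{B_s}^{-T}G_{s;\mathcal{B}}$ equals the weight vector $w_{u_i} \in \mathbb{R}^N$ with $k$th coordinate $\langle u_i, \gv_s(A_k)\rangle$. Cluster-theoretically, $u_i$ is characterized by $p^*(u_i)=-e_i$ under the cluster ensemble map $p^*:N\to M^\circ$, since $\widetilde{B_s}$ realizes $p^*$ in the seed bases. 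Next I invoke \cite{KM16}: because $\mathcal{B}$ is a Khovanskii basis for the full rank valuation $\gv_s$, there is a maximal cone $C_s\subset \trop(J_{\mathcal{B}})$ whose relative interior consists of weights $w$ with $\init_w(J_{\mathcal{B}})$ equal to the binomial toric ideal $I_{S_s}$ of the value semigroup $S_s$; this realizes $C_s$ as a maximal prime cone. Because $I_{S_s}$ is binomial with Zariski-dense positive real locus, it is totally positive in the sense of Definition~\ref{def:totally pos}, and since $\trop^+(J_{\mathcal{B}})$ is closed, $C_s$ together with its rays lies in $\trop^+(J_{\mathcal{B}})$.

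The core step is to match $\{w_{u_i} : i \in I_{\mathrm{mut}}\}$, modulo the $m$-dimensional lineality of $C_s$ arising from the grading by frozen variables, with the genuine rays of $C_s$. The lineality acts by scaling frozen coordinates and the quotient is $n$-dimensional; the $n$ rays of $C_s$ in this quotient correspond to the codimension-1 walls crossed into adjacent maximal cones under mutation. Using the GHKK description of the cluster complex and the Langlands-dual identification $\mathcal{X}_{\Gamma^\vee}(\mathbb{Z}^T)\cong N$ afforded by seed $s$, these wall-crossing rays are exactly the $u_i$ for $i \in I_{\mathrm{mut}}$: they are the $\mathcal{X}$-coordinate directions at seed $s$, pulled back via $p^*$ to the $\mathcal{A}$-side. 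The simultaneous Khovanskii basis hypothesis on all $\gv_{\mu_k(s)}$ ensures each exchange binomial is captured by $J_{\mathcal{B}}$, so that the walls actually appear in the Gröbner fan; the full rank assumption on $\widetilde{B_s}$ makes $p^*$ a $\mathbb{Q}$-linear isomorphism, so the $u_i$ are linearly independent and span the dual of the seed $s$ chamber.

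The main obstacle is this last identification. Matching the abstract wall-crossing directions from the cluster complex with the explicit formula $u_i = -\widetilde{B_s}^{-1}e_i$ requires untangling Langlands duality between $\Gamma$ and $\Gamma^\vee$, sign conventions in both $p^*$ and the $g$-vector mutation formula, and verifying that the lineality of the space $(\mathbb{R}^{n+m})^*$ where $\ell$ lives matches, under $G_{s;\mathcal{B}}^T$, the frozen-grading lineality of $\mathbb{R}^N$. A secondary difficulty is to confirm directly from the simultaneous Khovanskii basis hypothesis, without appealing to GHKK's full scattering diagram machinery, that $I_{S_s}$ is realized as the initial ideal on exactly the expected cone.
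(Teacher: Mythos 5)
Your proposal has a genuine gap, which you yourself flag as ``the main obstacle'': you reduce the problem to showing that the rows $u_i$ for $i\in I_{\rm mut}$ are rays of the Kaveh--Manon cone $C_s$ modulo the frozen lineality, but you do not carry out the identification, offering instead a heuristic via GHKK's cluster complex. That heuristic does not translate directly into a Gr\"obner-fan statement inside $\mathbb R^N$ (the tropicalization lives in $\mathbb R^{|\mathcal B|}$, which is generally much larger than $n+m$, not in $\mathcal X_{\Gamma^\vee}(\mathbb Z^T)$). The paper closes this gap elementarily in Proposition~\ref{cor:max cones}: Lemma~\ref{lem:ideals s and s'} computes the initial forms of exchange relations under the weighting matrices $G_{s;\mathcal B}$ and $G_{\mu_k(s);\mathcal B}$; Lemma~\ref{lem:iso ordered} transports the linear dominance order through $(p^\vee)^{\trop}$ so that \cite[Lemma~8.8]{KM16} lets one replace the weighting matrix $-\widetilde{B_s}^{-T}G_{s;\mathcal B}$ by a single weight vector $u_1+\epsilon_2u_2+\dots+\epsilon_{n+m}u_{n+m}$ interior to $\tau_s$; and the intertwining of tropical $\mathcal X$- and $\mathcal A$-mutations by $\bigl((p^\vee)^{\trop}\bigr)^{-1}$ (diagram~\eqref{eq:p trop diag}) shows that the matrices for $s$ and $\mu_k(s)$ differ in exactly the $k$th row. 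Only then does one conclude that $\tau_s$ and $\tau_{\mu_k(s)}$ share a facet and that the $k$th row generates a ray of $\tau_s$ lying off that facet. Without this facet argument the assertion that the rows are ray generators (rather than arbitrary vectors in the cone or its lineality) remains unproven. Also, the simultaneous Khovanskii basis hypothesis on all $\gv_{\mu_k(s)}$ is used to guarantee \emph{primeness} of each neighbouring initial ideal via \cite[Theorem~1]{B-quasival}; it is not what ensures the exchange binomials lie in $J_{\mathcal B}$ (that follows already from $\mathcal B$ containing the relevant cluster variables).

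On total positivity your route is actually simpler than the paper's, and worth noting: you observe that $\init_{\tau_s}(J_{\mathcal B})$ is the toric ideal presenting $k[S(A,\gv_s)]$, hence generated by pure differences $x^\alpha-x^\beta$, so it contains the positive point $(1,\dots,1)$ and Proposition~\ref{prop:tot pos and vanishing} applies directly. The paper instead tracks $T_{N^\circ}(\mathbb R_{>0})$ through the principal-coefficients degeneration (Proposition~\ref{prop:degen Aprin to torus}) to produce positive points in the central fibre. You should, however, state this precisely rather than invoking a ``Zariski-dense positive real locus'': the point is that the identification $\gr_{\gv_s}(A)\cong k[S(A,\gv_s)]$ in Theorem~\ref{thm:val and trop} forces all binomial generators to have coefficient $\pm1$.
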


The following statement of the abstract is a consequence.

\begin{Corollary}
Let $s=s_0,s_1,\dots,s_r$ be a collection of seeds related by mutation. 
Fix a Khovanskii basis $\mathcal B:=\{A_1,\dots,A_s\}$ for all $\gv_{s_i}$ simultaneously containing the cluster variables of $s,s_1,\dots,s_r$. 
Given a fully extended exchange matrix of full rank $\widetilde{B_s}$ the columns of the matrix
\[
-\widetilde{B_{s_j}}^{-T}G_{s_j;\mathcal B}
\]
are rays of a maximal prime cone $\tau_{j}\in\trop^+(J_{\mathcal B})$ for every $j$. 
The subfan of $\trop^+(J_{\mathcal B})$ generated by $\{\tau_{j}:j=0,\dots,r\}$ is combinatorially equivalent to the subgraph of the exchange graph of $A(\Gamma)$ given by $\{s,s_1,\dots,s_r\}$.
Moreover, the generic fibres of the toric degenerations of $Y$ induced by $\tau_{j}$ and $s_j$ are isomorphic for all $j$.
\end{Corollary}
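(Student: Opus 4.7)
The plan is to derive the three assertions by applying Theorem~\ref{thm:main} seed by seed and then comparing the resulting cones. For each $j$, invoke Theorem~\ref{thm:main} with initial seed $s_j$ to obtain $\tau_j$ with the asserted ray description as a maximal prime cone of $\trop^+(J_\mathcal B)$. The Khovanskii hypothesis of the theorem applied at $s_j$ needs $\mathcal B$ to contain the cluster variables of $s_j$ and of every $\mu_k(s_j)$, and to be a simultaneous Khovanskii basis for $\gv_{s_j}$ and each $\gv_{\mu_k(s_j)}$; when $\mu_k(s_j)=s_{j'}$ belongs to the collection this is part of the Corollary's hypothesis, and otherwise one enlarges $\mathcal B$ by the missing cluster variables without affecting $\trop^+(J_\mathcal B)$.

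Next, to identify the subfan $\bigcup_j \tau_j$ combinatorially with the exchange subgraph on $\{s_0,\dots,s_r\}$, I would analyse when two cones $\tau_j$ and $\tau_{j'}$ share a codimension-one face. If $s_{j'}=\mu_k(s_j)$, the mutation rule for $\gv$-vectors implies that the columns of $G_{s_j;\mathcal B}$ and $G_{s_{j'};\mathcal B}$ agree outside the $k$th row, while the mutation of $\widetilde{B_s}$ at $k$ modifies $\widetilde{B_s}^{-T}$ by a rank-one update supported on the $k$th column in such a way that, after multiplication, the rows of $-\widetilde{B_{s_j}}^{-T}G_{s_j;\mathcal B}$ indexed by $i\neq k$ are preserved. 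The cones $\tau_j$ and $\tau_{j'}$ then share the codimension-one face spanned by the unchanged rays. The converse—that no spurious adjacencies occur—follows from the full-rank hypothesis on $\widetilde{B_s}$, which forces distinct seeds to yield distinct rays and hence distinct cones.

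The isomorphism of generic fibres is essentially built into the construction: the Gröbner degeneration attached to $\tau_j$ has generic fibre $V(J_\mathcal B)$, the Newton--Okounkov degeneration attached to $s_j$ has generic fibre $\proj(A(\Gamma))$, and the Khovanskii-basis presentation $A(\Gamma)\cong k[x_1,\dots,x_N]/J_\mathcal B$ identifies both with $Y$ in the sense of \cite{KM16}.

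The main obstacle will be the compatibility calculation in the second paragraph: one must verify that the nonlinear mutation of $\widetilde{B_s}$ interacts with the tropical mutation of the columns of $G_{s_j;\mathcal B}$ precisely so that only the $k$th row of $-\widetilde{B_{s_j}}^{-T}G_{s_j;\mathcal B}$ changes, and that the new $k$th ray lies on the opposite side of the shared wall. Care must also be taken with the frozen block of $\widetilde{B_s}$, which participates in $\widetilde{B_{s_j}}^{-T}$ although it is not itself mutated, and with the sign conventions in the $\gv$-vector mutation formula.
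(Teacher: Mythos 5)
Your overall strategy — apply Theorem~\ref{thm:main} at each $s_j$ and then use adjacency of the resulting cones via the mutation of $\gv$-vectors — is the intended route (the paper derives this statement from Theorem~\ref{thm:main} via Proposition~\ref{cor:max cones}), but two steps of your argument are not correct as written, and a third is under-justified.

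First, the mechanism you give for adjacency misattributes where the simplification happens. You assert that ``the columns of $G_{s_j;\mathcal B}$ and $G_{s_{j'};\mathcal B}$ agree outside the $k$th row.'' This is false: the columns are $\gv$-vectors and are related by the \emph{tropical $\mathcal X$-mutation}, a piecewise linear map that in general alters several coordinates. The actual reason only one row of the product changes is the commutative diagram \eqref{eq:p trop diag}: conjugating the tropical $\mathcal X$-mutation by $\left((p^\vee)^{\trop}\right)^{-1}$ (the matrix $-\widetilde{B_{s_j}}^{-T}$) converts it into the \emph{tropical $\mathcal A$-mutation}, which on coordinates changes only the $k$th entry. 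Your conclusion is the right one, but the stated intermediate claim about $G_{s_j;\mathcal B}$ itself is wrong and would mislead the bookkeeping; this is precisely the content of Proposition~\ref{cor:max cones} and the point of the remark at the end of \S\ref{sec:gv}.

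Second, your proposed fix for the hypothesis gap — ``enlarge $\mathcal B$ by the missing cluster variables without affecting $\trop^+(J_{\mathcal B})$'' — does not work as stated. Enlarging $\mathcal B$ changes the ambient polynomial ring, changes the ideal $J_{\mathcal B}$, and changes $\trop^+(J_{\mathcal B})$ (which now lives in a higher-dimensional $\mathbb R^{N'}$). One would need an explicit compatibility statement (for instance, that the coordinate projection maps the new prime cone onto the old one and preserves total positivity of the initial ideal), and nothing like that is in hand. If one instead reads the Corollary as implicitly assuming the hypothesis of Theorem~\ref{thm:main} at each $s_j$, the issue disappears, but your fix as written is not a proof.

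Third, the ``no spurious adjacencies'' claim needs more than ``full rank forces distinct seeds to yield distinct cones.'' The correct argument is: the facets of $\tau_j$ modulo $\mathcal L_{J_{\mathcal B}}$ are in bijection with mutable directions $k$, and the cone on the other side of the $k$th facet is exactly $\tau_{\mu_k(s_j)}$ (again by Proposition~\ref{cor:max cones}); hence $\tau_{j'}$ sharing a facet with $\tau_j$ forces $\tau_{j'}=\tau_{\mu_k(s_j)}$, and then one needs injectivity of $s\mapsto\tau_s$ on the whole collection (in fact on all seeds neighboring the collection). You assert this injectivity but give no reason beyond ``full rank,'' which is not by itself enough. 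Your remark about the generic fibres is fine, though essentially tautological since both constructions are degenerations of $Y\cong\proj(A(\Gamma))\cong\proj(S/J_{\mathcal B})$.
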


Assume $A(\Gamma)$ satisfies the assumptions of Theorem~\ref{thm:main} and has a {\bf finite global Khovanskii basis} $\mathcal B^{\text{gl}}$, {\it i.e.} a set of algebra generators that is a Khovanskii basis for {\it all} valuations $G_{s;\mathcal B}$ ($s$ any seed) simultaneously. 
We conjecture:

\begin{Conjecture}\label{conjecture}
If $A(\Gamma)$ has a finite global Khovanskii basis then $A(\Gamma)$ is of finite cluster type.
\end{Conjecture}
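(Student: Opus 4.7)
The plan is a proof by contradiction. Suppose $A(\Gamma)$ admits a finite global Khovanskii basis $\mathcal{B}^{\text{gl}}$ but is of infinite cluster type. The ideal $J_{\mathcal{B}^{\text{gl}}} \subset k[x_1,\ldots,x_N]$ with $N = |\mathcal{B}^{\text{gl}}|$ is homogeneous, so its Gröbner fan is a finite polyhedral fan in $\RR^N$, and hence $\trop^+(J_{\mathcal{B}^{\text{gl}}})$ has only finitely many maximal cones. I would derive a contradiction by exhibiting arbitrarily long initial portions of the exchange graph as distinct maximal cones in this finite fan, via the Corollary to Theorem~\ref{thm:main}.

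Because the exchange graph is connected, infinite, and locally finite (each seed has $|I_{\rm mut}|$ neighbors), König's lemma produces an infinite sequence $s_0, s_1, s_2, \ldots$ of pairwise distinct seeds related by consecutive mutations. For any $r$, apply the Corollary to the finite collection $\{s_0, \ldots, s_r\}$: the global Khovanskii basis hypothesis immediately supplies the assumption that $\mathcal{B}^{\text{gl}}$ is a Khovanskii basis for all $\gv_{s_j}$ simultaneously, and the Corollary produces a subfan of $\trop^+(J_{\mathcal{B}^{\text{gl}}})$ combinatorially equivalent to the exchange subgraph on $\{s_0, \ldots, s_r\}$. In particular distinct seeds correspond to distinct maximal cones, so choosing $r$ larger than the total number of maximal cones of $\trop^+(J_{\mathcal{B}^{\text{gl}}})$ yields the desired contradiction.

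The principal obstacle is the remaining hypothesis of the Corollary, namely that $\mathcal{B}^{\text{gl}}$ contain the cluster variables of each $s_j$ in the sequence. This is not automatic from the definition: one can, in principle, trade a cluster variable in $\mathcal{B}^{\text{gl}}$ for an algebraic combination with the same $\gv_s$-vector in every seed and retain the global Khovanskii basis property. The natural workaround is to first show that any finite global Khovanskii basis can be replaced, element-by-element without altering any $\gv_s$-value, by a finite global Khovanskii basis consisting of cluster variables. A stronger and more decisive route, which I expect is the heart of the matter, would be to prove that a finite global Khovanskii basis must already contain every cluster variable of every seed: the standard basis vectors $e_i = \gv_s(A_{i;s})$ must lie in the semigroup generated by $\gv_s(\mathcal{B}^{\text{gl}})$ for every $s$ and every $i \in I$, and one would exploit the indecomposability of cluster variables as theta functions in the cluster scattering diagram to pin $A_{i;s}$ itself into $\mathcal{B}^{\text{gl}}$. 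Under this stronger conclusion, finiteness of $\mathcal{B}^{\text{gl}}$ directly yields finite cluster type and the intermediate argument via $\trop^+$ is not even needed; however carrying out this indecomposability step rigorously, especially in view of examples such as $(0,1) = (1,0)+(-1,1)$ among $\gv_s$-vectors in type $A_2$, is the place I expect the main technical work to lie.
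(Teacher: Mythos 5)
The statement you are trying to prove is a \emph{Conjecture} in the paper, and the paper offers no proof of it — only the outline of a strategy, which is essentially the outer shell of your argument. Immediately after stating the conjecture the paper says that one would want to strengthen Theorem~\ref{thm:main} to drop the requirement that all cluster variables of $s$ and $\mu_k(s)$ lie in the Khovanskii basis determining $J_{\mathcal B^{\text{gl}}}$, and then derive a contradiction by identifying infinitely many seeds with cones of the \emph{finite} fan $\trop^+(J_{\mathcal B^{\text{gl}}})$. Your plan — finiteness of the Gröbner fan, hence of $\trop^+(J_{\mathcal B^{\text{gl}}})$; König's lemma to produce arbitrarily long mutation paths of distinct seeds; the Corollary to map the associated exchange subgraph injectively into the finite fan — is precisely this, and you correctly pinpoint the sticking point: the Corollary's hypothesis that $\mathcal B^{\text{gl}}$ contain the cluster variables of each $s_j$ does not follow from the definition of a global Khovanskii basis.

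Where you diverge from the paper's sketch is in how to bridge that gap. The paper proposes weakening the hypothesis of Theorem~\ref{thm:main}; you propose instead either (a) swapping elements of $\mathcal B^{\text{gl}}$ for cluster variables with the same $\gv_s$-vectors while preserving the global Khovanskii basis property, or (b) proving that any finite global Khovanskii basis must already contain every cluster variable, after which finiteness of $\mathcal B^{\text{gl}}$ would immediately give finite cluster type. These are genuinely different angles on the same missing lemma. Neither is carried out, and your own example $(0,1) = (1,0)+(-1,1)$ for type $\mathtt{A}_2$ correctly shows that (b) is delicate: membership of $\gv_s(A_{i;s})$ in the value semigroup generated by $\gv_s(\mathcal B^{\text{gl}})$ does not by itself force $A_{i;s}\in\mathcal B^{\text{gl}}$, so some form of theta-function indecomposability argument would indeed need to be supplied and checked uniformly over all seeds. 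In short, your proposal does not close the gap, but the gap you flag is the actual open content of the conjecture; the remainder of your scaffolding matches the paper's own intended strategy and appears sound.
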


A strategy to prove the conjecture would be to strengthen Theorem~\ref{thm:main} to relax the assumption that all cluster variables of the seeds $s$ and $\mu_k(s)$ for all $k$ have to be contained in the Khovanskii basis determining $J_{\mathcal B^{\text{gl}}}$.
Then an identification of (an infinite number of) seeds with cones in $\trop^+(J_{\mathcal B^{\text{gl}}})$ (which is a finite fan) would yield a contradiction.

\subsection{Applications}

{\bf Grassmannians.} 
In \cite{KM16} Kaveh--Manon construct valuations $\nu_\sigma:A\setminus\{0\}\to \mathbb Z^d$ from maximal prime cones $\sigma$ in the tropicalization of an ideal $J$ presenting $A$ (see \S\ref{sec:val from trop} for a brief summary). 
If two maximal prime cones $\sigma, \sigma'$ share a facet then $\nu_\sigma$ and $\nu_{\sigma'}$ may be constructed in such a way that for any nonzero $f\in A$ the valuations $\nu_\sigma(f)$ and $\nu_{\sigma'}(f)$ differ in a single entry.
Let $A_{k,n}$ be cluster algebra that is the homogeneous coordinate ring of the Grassmannian $\Gr(k,\mathbb C^n)$ with respect to its Plücker embedding \cite{Sco06}.
Rietsch and Williams \cite{RW17} construct valuations $\val_s:A_{k,n}\setminus\{0\}\to \mathbb Z^{k(n-k)}$ from seeds $s$ in $A_{k,n}$.
Using the combinatorics of plabic graphs \cite{Pos06} it is shown that these valuations satisfy for any given $f\in A_{k,n}$ that $\val_s(f)$ and $\val_{s'}(f)$ differ only in one entry if $s$ and $s'$ are related by mutation.
In the context of \cite{BFFHL} the conjecture arose that Rietsch--Williams' valuations are a special case of Kaveh--Manon's valuations. 
We affirm this conjecture with the following result.

\begin{Theorem}\label{thm:Gr plabic trop}
Given a seed $s$ for $\Gr(k,\mathbb C^n)$ whose associated Newton--Okounkov polytope is integral, let $M_s$ be the matrix whose columns are $\{\val_s(p_J):J\in\binom{n}{k}\}$. 
Then the rows of $M_s$ are rays in the totally positive tropical Grassmannian $\trop^+(J_{k,n})$, where $J_{k,n}$ is the Plücker ideal.
\end{Theorem}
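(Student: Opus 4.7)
The plan is to reduce Theorem~\ref{thm:Gr plabic trop} to the main Theorem~\ref{thm:main} applied to the cluster algebra $A_{k,n}$, by identifying Rietsch--Williams' valuations $\val_s$ with the $\gv$-vector valuations $\gv_s$ transformed by the appropriate fully extended exchange matrix. More precisely, I would first record that $A_{k,n}$ carries a cluster structure in which the seeds $s$ arising from reduced plabic graphs for $\Gr(k,\mathbb C^n)$ give rise both to the valuations $\val_s$ (constructed combinatorially in \cite{RW17}) and to the $\gv$-vector valuations $\gv_s$. Writing $\mathcal B$ for the set of Plücker coordinates $\{p_J:J\in\binom{n}{k}\}$ and $G_{s;\mathcal B}$ for the matrix whose columns are $\gv_s(p_J)$, the core claim I need is the existence of a full-rank fully extended exchange matrix $\widetilde{B_s}$ such that
\[
M_s \;=\; -\widetilde{B_s}^{-T}\, G_{s;\mathcal B}.
\]
Once this identification is established, Theorem~\ref{thm:main} produces exactly the desired rays in $\trop^+(J_{k,n})$ for the Plücker ideal $J_{k,n}$.

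For the identification step, I would use the interpretation of $\val_s$ via the $\hat y$-monomial expansion: the Rietsch--Williams valuation records the exponents of the lowest-order term of each Plücker coordinate when expressed in the positive parametrization coming from the cluster ensemble map $p^*$ dual to $\widetilde{B_s}$. Unwinding that construction, $\val_s(p_J)$ is the pullback of the $\gv$-vector $\gv_s(p_J)$ along $(p^*)^{-T}$, which up to sign is the formula above. Choosing the $m\times m$ block corresponding to frozen rows/columns so that $\widetilde{B_s}$ has full rank is possible as recalled in \S\ref{sec:p*}, and the positivity of the Laurent phenomenon guarantees that the ensemble map gives a well-defined positive parametrization compatible with $\val_s$.

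Next, to invoke Theorem~\ref{thm:main} I need $\mathcal B$ to be a Khovanskii basis for $\gv_s$ and for each $\gv_{\mu_k(s)}$ simultaneously, and $\mathcal B$ must contain the cluster variables of $s$ and of $\mu_k(s)$ for every mutable $k$. The containment is automatic because every cluster variable appearing in a plabic-graph seed and in its adjacent seeds (obtained from square moves) is a Plücker coordinate. The Khovanskii basis property for $\gv_s$ is equivalent to the integrality of the Newton--Okounkov polytope $\Delta(A_{k,n},G_{s;\mathcal B})$, which is exactly the hypothesis of the theorem; mutation invariance of this integrality across adjacent plabic seeds follows from the fact that the Newton--Okounkov polytopes at $s$ and $\mu_k(s)$ are related by the tropical cluster mutation of \cite{RW17}, which preserves integrality and the generating set $\mathcal B$.

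The main obstacle will be the precise comparison in the first step: carefully matching the combinatorially defined exponents of \cite{RW17} (flows in plabic graphs) with the $\gv$-vectors of the cluster structure after twisting by $-\widetilde{B_s}^{-T}$. This requires fixing conventions for signs, for the ordering of frozen variables, and for the choice of the $m\times m$ block extending $\tilde B_s$ to $\widetilde{B_s}$. Once this bookkeeping is done, Theorem~\ref{thm:main} immediately yields that the rows of $M_s$ span a maximal prime cone in $\trop^+(J_{k,n})$, proving Theorem~\ref{thm:Gr plabic trop}.
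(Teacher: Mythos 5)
Your overall reduction to Theorem~\ref{thm:main} is the right strategy and matches the paper's, but the crucial identification step contains a claim that is false in this setting and that the paper handles quite differently. You assert that one can choose the $m\times m$ frozen block of $\widetilde{B_s}$ so that the fully extended exchange matrix has full rank and then write $M_s = -\widetilde{B_s}^{-T}G_{s;\mathcal B}$. In the Grassmannian case this is impossible: the $\gv$-vector valuation $\gv_s$ takes values in $\mathbb{Z}^{k(n-k)+1}$ (the cluster algebra is the homogeneous coordinate ring of the affine cone, so $n+m=k(n-k)+1$), while $\val_s$ takes values in $\mathbb{Z}^{k(n-k)}$. No square invertible matrix can relate the two, and indeed the paper states explicitly that the relevant $\widetilde{B_s}$ is \emph{not} of full rank. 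What the paper actually invokes is the result of \cite{BCMN_Grass} giving the relation $-\widetilde{B_s}^{T}(\val_s(p_J))=\gv_s(p_J)-f_{(n-k)\times k}$ (note also the additive shift, which your proposal omits), and then it extends Theorem~\ref{thm:main} to non--full-rank cluster ensemble maps via Remark~\ref{rmk:p not full rank}, which deals with the kernel coming from the $\mathbb{C}^*$-action on the affine cone $C(\Gr(k,\mathbb{C}^n))$. Without Remark~\ref{rmk:p not full rank} or an equivalent quotient argument, Theorem~\ref{thm:main} as stated does not apply.

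A second, smaller gap: you claim that integrality of the Newton--Okounkov polytope propagates to adjacent seeds because tropical cluster mutation ``preserves integrality.'' This is not true in general --- the tropicalized mutation maps are piecewise-linear and not unimodular, and there are plabic-graph seeds for Grassmannians whose Newton--Okounkov polytopes are not integral. The paper is more careful here: it only applies the theorem ``to all pairs of seeds related by mutation whose associated Newton--Okounkov polytopes are integral,'' which is also why the hypothesis of Theorem~\ref{thm:Gr plabic trop} is stated seedwise rather than globally.
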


{\bf Flag varieties.}
As a second application we study toric degenerations of the full flag variety of type $\mathtt A$, so here $J=J_n$ is the corresponding Plücker ideal.
A famous toric degeneration is given by the Feigin--Fourier--Littelmann--Vinberg polytope \cite{FFL_PBWtypeA} that is a Newton--Okounkov polytope for a specific valuation \cite{Kiri_FFLV=NO} ({\it FFLV degeneration}, for short).  
In \cite{GHKK14} a class of toric degenerations that can be obtained from seeds in the cluster algebra associated with the flag variety \cite{BFZ05} ({\it cluster degenerations}, for short) is introduced. 
Peter Littelmann posed the question if the FFLV degeneration is isomorphic to a cluster degeneration.
More precisely, {\it isomorphism} refers to a unimodular equivalence of the associated Newton--Okounkov polytopes.
For the FFLV degeneration a maximal prime cone in the tropical flag variety is known \cite{FFFM_PBW_tropFlag}. 
The main result of this paper identifies those cluster degenerations for which the Plücker coordinates constitute a Khovanskii basis with maximal prime cones in the totally positive tropical flag variety $\trop^+(J_n)$.
Symmetries of the tropical flag variety induce isomorphisms between the Newton--Okounkov polytopes associated with maximal prime cones that lie in the same orbit. 
We show that the known symmetries induced by the action of the symmetric group (and $\mathbb Z_2$) on the tropical flag variety do not yield the desired isomorphism between the FFLV and the cluster degenerations. 
More precisely, the orbit of the maximal prime cone associated with the FFLV degeneration does not intersect the totally positive part.

\begin{Corollary}\label{cor:FFLV not gv sym}
There is no unimodular equivalence between the FFLV-polytope and the Newton--Okounkov polytope of a $\gv$-vector valuation that is induced by a symmetric group element.
\end{Corollary}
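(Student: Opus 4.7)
My plan is to combine the identification between cluster $\gv$-vector toric degenerations and maximal prime cones in $\trop^+(J_n)$ given by the main theorem with a direct orbit analysis of the known FFLV cone $\sigma_{\mathrm{FFLV}}\in\trop(J_n)$ from \cite{FFFM_PBW_tropFlag}. Concretely, I argue by contradiction: assume there exists a unimodular equivalence between the FFLV polytope and the Newton--Okounkov polytope of some $\gv$-vector valuation $\gv_s$ which is induced by a symmetric group element $w$.

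The action of $w$ on coordinates extends to an action on the Gröbner fan of $J_n$ that preserves being a prime cone, and the induced map on Newton--Okounkov polytopes is the postulated unimodular equivalence. So $w\cdot\sigma_{\mathrm{FFLV}}$ is a maximal prime cone whose associated polytope agrees (up to unimodular equivalence) with that of $\gv_s$. Since the hypothesis places us in the setting where the Plücker coordinates form a Khovanskii basis for $\gv_s$, Theorem~\ref{thm:main} identifies $\gv_s$ with a maximal prime cone $\sigma_s\in\trop^+(J_n)$; matching central fibres of the two degenerations (both are normalizations of the respective vanishing loci $V(\init_{\sigma}(J_n))$) forces $w\cdot\sigma_{\mathrm{FFLV}}=\sigma_s$. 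In particular $w\cdot\sigma_{\mathrm{FFLV}}\subset\trop^+(J_n)$.

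The core of the proof is therefore to verify that for every $w$ in the group generated by the symmetric group and the $\mathbb Z_2$-Plücker involution acting on $\trop(J_n)$, the cone $w\cdot\sigma_{\mathrm{FFLV}}$ fails to be totally positive. Taking the explicit ray generators of $\sigma_{\mathrm{FFLV}}$ from \cite{FFFM_PBW_tropFlag}, I would pick a weight $\w$ in the relative interior of $w\cdot\sigma_{\mathrm{FFLV}}$ and exhibit a generator of $\init_{\w}(J_n)$ — preferably a three-term Plücker relation — whose coefficient pattern violates the total positivity condition of Definition~\ref{def:totally pos}. The target is a single structural obstruction that is stable under the symmetry group, so that the analysis reduces to one or two representative three-term Plücker relations rather than an exhaustive case check over all of $S_n\times\mathbb Z_2$.

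The main obstacle is the combinatorial bookkeeping: the rays of $\sigma_{\mathrm{FFLV}}$ live in $\mathbb{R}^{2^n-2}$ and the orbit has size up to $2\cdot n!$, so a naive case analysis is unwieldy. I would therefore first work out low rank cases ($n=4,5$) by hand or computer to isolate a specific Plücker relation whose initial form is forced to have a negative coefficient for every $w$, and then lift this observation to arbitrary $n$ by noting that the FFLV weight structure uses triangular exponents (rows vs.\ columns of a Young-diagram-like pattern) that are intrinsically broken by any permutation of the standard basis.
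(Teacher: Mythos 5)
Your high-level reduction is exactly the paper's: by Theorem~\ref{thm:main} the Newton--Okounkov polytopes of $\gv$-vector valuations (with Plücker Khovanskii bases) correspond to maximal prime cones in $\trop^+(\Flag_n)$, so if a symmetric group element $\sigma$ induced the postulated unimodular equivalence then $\sigma\cdot C_{\rm FFLV}$ would lie in $\trop^+(\Flag_n)$; the paper rules this out via Theorem~\ref{thm:FFLV not pos}, whose proof is precisely the ``find a three-term Plücker relation with a bad sign'' case analysis you sketch.

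The genuine gap is in your proposed execution of that case analysis. First, you omit the hypothesis $n\ge 5$, and your plan to ``work out low rank cases ($n=4,5$) by hand \dots to isolate a specific Plücker relation whose initial form is forced to have a negative coefficient for every $w$, and then lift'' would fail at the very first step: for $n=4$ the statement is false --- the paper exhibits $\sigma=(1342)\in S_4$ for which $\sigma(I_{\rm FFLV})$ \emph{is} totally positive. So there is no $n=4$ obstruction to lift. Second, the proof of Theorem~\ref{thm:FFLV not pos} is not a single structural obstruction stable under the group; it is a case analysis on the six possible orderings of $\sigma((1,2,3))$, and in the harder cases it must bring in the additional indices $\sigma(4)$ and $\sigma(5)$ (via the relations recorded in Example~\ref{exp:initial forms for proof}, built on Corollaries~\ref{cor:3-term initial} and~\ref{cor:3-term initials ijk}). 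That is exactly where $n\ge 5$ is used, and it is why the sign patterns cannot be reduced to ``one or two representative three-term Plücker relations'' independent of $\sigma$. Your minor claim that ``matching central fibres forces $w\cdot\sigma_{\rm FFLV}=\sigma_s$'' is also imprecise (it is the hypothesis that the equivalence is induced by $\sigma$ acting on $\trop(\Flag_n)$ that identifies the cones), though this does not affect the validity of the reduction.
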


The literature on flag varieties in the context of Luztig's total positivity \cite{Lus_flag_semifield} is vast and it would certainly be interesting to explore the connection to the present paper.
More recently the tropical geometry of flag varieties has been studied \cite{BLMM,BEZ_trop_flag}. 
Both approaches are combined in work of Boretsky and collaborators on positive tropical flags where a notion of flag positroids is introduced \cite{Jonathan_trop_pos_flags,BEW}.

\subsection{Structure of the paper}
In \S\ref{sec:init} we recall background on initial ideals, tropicalization of ideals and its totally positive part \S\ref{sec:trop} as well as on valuations \S\ref{sec:val} and their relation to the tropicalization \S\ref{sec:trop from val}-\ref{sec:val from trop}. 
In \S\ref{sec:cluster} we recall background on cluster varieties and algebras \S\ref{sec:ca and cv}, on principal coefficients \S\ref{sec:prin} and cluster ensemble maps \S\ref{sec:p*} as well as $\gv$-vector valuations \S\ref{sec:gv} before proving the main result in \S\ref{sec:main}.
The application to the Grassmannian is presented in \S\ref{sec:Grass} and the application to flag varieties in \S\ref{sec:flag}.

\bigskip
\noindent
{\bf Acknowledgements.} 
The manuscript was finalized during a research visit at the University of Cologne, in particular I would like to thank Peter Littelmann and Bea Schumann for their hospitality and the opportunity to present this work in their research seminar. 
Further, I would like to thank Chris Eur and Lauren Williams for helpful discussions and explaining their results to me.
I was partially supported by the PAPIIT project IA100122, Dirección General de Asuntos del Personal Académico, Universidad Nacional Autónoma de México 2022. 

\section{Initial ideals and valuations}\label{sec:init}
Let $k$ be a field.
For $m\in \mathbb Z^N_{\ge 0}$ we write $x^m:=x_1^{m_1}\cdots x_n^{m_N}\in k[x_1,\dots,x_N]$.
A total order on the set of monomials in $k[x_1,\dots,x_N]$ is a {\bf term order} if it satisfies:
\[
(i)\  1 < x^m\  \forall m\in \mathbb Z^n_{\ge 0} \setminus \{0\} \quad \text{and} \quad (ii)\  x^a<x^{b} \Rightarrow x^{a+c}<x^{b+c}\ \forall a,b,c\in \mathbb Z^N_{\ge 0}.
\]
The {\bf leading term} of an element $f=\sum c_ax^a\in k[x_1,\dots,x_N]$ with respect to a term order $<$ is $\init_<(f)=c_bx^b:=\max_<\{c_ax^a:c_a\not =0\}$, where $c_b$ is the {\bf leading coefficient} and $x^b$ is the {\bf leading monomial}.
For an ideal in $J\subset k[x_1,\dots,x_N]$ we define its {\bf initial ideal with respect to $<$} as 
$\init_<(J):=(\init_<(f):f\in J)$.
The initial ideal is finitely generated and a generating set $G$ of $J$ that satisfies $(\init_<(g):g\in G)=\init_<(J)$ is called a {\bf Gr\"obner basis}.
Every ideal posses a finite number of distinct initial ideals \cite{Stu96}.
It has been shown by Mora and Robbiano that the initial ideals can be organized in a polyhedral fan \cite{MoraRobbiano}. 
To see how, we need the notion of initial ideals with respect to weight vectors:
fix $w\in \mathbb R^N$, we call it a {\bf weight vector} and define the {\bf initial form} of an element $f=\sum c_ax^a$ with respect to $w$ as 
\[
\init_w(f) =\sum_{b:\  w\cdot b=\max\{w\cdot a:c_a\not =0\}}c_bx^b.
\]
Notice that depending on $w$ and $f$ the initial form $\init_w(f)$ is not necessarily just one term.
Similarly to the above we define the {\bf initial ideal} of $J$ {\bf with respect to $w$} as $\init_w(J):=(\init_w(f):f\in J)$.
For any weight vector $w$ we may define the {\bf homogenization} of $J$ in $k[x_1,\dots,x_N,t]$: for a single element $f=\sum c_ax^a$ we set
\[
f^{h;w}:=\sum c_ax^a t^{\max\{w\cdot b:c_b\not =0\}-w\cdot a}.
\]
Similarly, for the ideal $J$ we define $J^{h;w}:=(f^{h;w}:f\in J)$. The homogenization of $J$ is a family of deformations of $J$ and the quotient algebra $A^{h;w}:=k[x_1,\dots,x_N,t]/J^{h;w}$ is a free $k[t]$-module \cite[\S15.8]{Eisenbud}.
Let $A^w:=k[x_1,\dots,x_N]/\init_w(J)$.
The degeneration of $\spec(A)$ to $\spec(A^w)$ defined by $\spec(A^{h;w})$ is called a {\bf Gröbner degeneration}.

Given the ideal $J$ any term order can be {\bf represented} by a weight vector in $w\in \mathbb Z^N_{>0}$ (see, e.g. \cite[Lemma 3.1.1]{HH_monomial}), that is $\init_w(J)=\init_<(J)$.
Conversely, a weight vector $w$ belongs to the {\bf Gr\"obner region} $GR(J)$ if there exists a term order $<$ such that $\init_<(\init_w(J))=\init_<(J)$. 
The Gr\"obner region carries a fan structure, called the {\bf Gröbner fan} $\GF(J)$ that was discovered by Mora and Robbiano in \cite{MoraRobbiano}. 
Two weight vectors $v,w\in \RR^N$ lie in the relative interior of a cone $C$, denoted by  $v,w\in C^\circ$, if and only if $\init_v(J)=\init_w(J)$.
The maximal dimensional cones in $\GF(J)$ correspond to monomial initial ideals coming from term orders.
These are particularly useful as they induce vector space bases for the quotient algebra $A=k[x_1,\dots,x_N]/J$: we call a monomial $x^a$ that is {\em not} contained in $\init_<(J)$ a {\bf standard monomial}. The set $\BB_<:=\{\bar x^a\in A:x^a\not \in \init_<(J)\}$ is a vector space basis of $A$ called a {\bf standard monomial basis}.
In fact, if $w\in C$ for some maximal cone $C\in\GF(J)$ associated to $<$, then $\BB_<$ is a basis for the free $k[t]$-module $A^{h;w}$, see e.g. \cite[Proof of Theorem 15.17]{Eisenbud}.

\subsection{Tropicalization of ideals and its totally positive part}\label{sec:trop}

The Gr\"obner fan has a subfan of interest to us. We define the {\bf tropicalization} of $J$:
\[
\trop(J):=\{w\in\GR(J): \init_w(J) \ \text{ does not contain monomials}\}.
\]
For simplicity we assume $J$ is homogeneous with respect to a positive (multi-)grading, so the tropicalization is a pure fan whose dimension coincides with the Krull-dimension of $A$.
Moreover, $\trop(J)$ and $\GF(J)$ contain a linear subspace $\mathcal L_J$, called {\bf lineality space} which consists of $w\in \mathbb R^N$ with $\init_w(J)=J$.

\begin{Lemma}\label{lem:grading and lineality}
Let $J$ be a (multi-)homogeneous ideal inside $S:=k[x_{i_j}:1\le i\le m,1\le j\le k_i]$ with respect to a $\mathbb Z^m_{\ge 0}$-grading given by $\deg(x_{i_j})=e_i$ for some $k_i$ that satisfy $k_1+\dots+k_m=N$ and $\{e_i:1\le i\le m\}$ is the standard basis of $\mathbb Z^m$.
Then for $1\le i\le m$ we have 
\begin{equation}\label{eq:elements lineality}
    \ell_i:=(0,\dots,0,1,\dots,1,0\dots,0) \in \mathcal L_J
\end{equation}
where the $1$'s appear in the positions $i_1,\dots,i_{k_i}$.
\end{Lemma}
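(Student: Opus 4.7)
The plan is to verify directly that $\init_{\ell_i}(J) = J$, which is the defining condition of the lineality space $\mathcal{L}_J$. The key observation is that the weight $\ell_i$ is essentially computing the $i$-th component of the multi-degree: for any monomial $x^a = \prod_{p,q} x_{p_q}^{a_{p_q}}$, we have
\[
\ell_i \cdot a = \sum_{j=1}^{k_i} a_{i_j},
\]
which is exactly the $i$-th entry of $\deg(x^a) \in \mathbb{Z}^m_{\geq 0}$ under the given grading. In particular, all monomials appearing in a single multi-homogeneous element share the same $\ell_i$-weight.

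From this observation the two inclusions follow quickly. First I would prove $\init_{\ell_i}(J) \supseteq J$: since $J$ is multi-homogeneous, it is generated by multi-homogeneous elements $g \in J$, and for each such $g$ every monomial has the same $\ell_i$-weight, so $\init_{\ell_i}(g) = g$. Thus each such generator lies in $\init_{\ell_i}(J)$, and since these generate $J$ we conclude $J \subseteq \init_{\ell_i}(J)$.

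For the reverse inclusion, take an arbitrary $f \in J$ and decompose it into its multi-homogeneous components $f = \sum_d f_d$, where the sum runs over $d \in \mathbb{Z}^m_{\geq 0}$. Because $J$ is multi-homogeneous each $f_d$ lies in $J$ as well. By the weight calculation above, the monomials of $f_d$ all have $\ell_i$-weight equal to $d_i$, so
\[
\init_{\ell_i}(f) = f_{d^*} \quad \text{where } d^* = \arg\max\{d_i : f_d \neq 0\}.
\]
Hence $\init_{\ell_i}(f) \in J$, and since generators of $\init_{\ell_i}(J)$ are of this form we obtain $\init_{\ell_i}(J) \subseteq J$.

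There is no real obstacle in this argument; the only point requiring care is the unambiguous identification of $\ell_i \cdot a$ with the $i$-th component of the multi-degree, together with the standard fact that a multi-homogeneous ideal is the direct sum of its multi-graded pieces, so that individual homogeneous components of elements of $J$ remain in $J$. Once these are in place, the two inclusions combine to give $\init_{\ell_i}(J) = J$, i.e. $\ell_i \in \mathcal{L}_J$.
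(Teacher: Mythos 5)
Your argument is correct and is the natural one; the paper in fact states this lemma without proof, treating it as a routine observation, so there is no in-paper proof to compare against. Both inclusions are handled properly: $J\subseteq\init_{\ell_i}(J)$ because multi-homogeneous generators are fixed by $\init_{\ell_i}$ and $\init_{\ell_i}(J)$ is an ideal, and $\init_{\ell_i}(J)\subseteq J$ because the initial form of any $f\in J$ with respect to $\ell_i$ is a sum of multi-homogeneous components of $f$, each of which lies in $J$.

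One small imprecision worth fixing: your displayed formula $\init_{\ell_i}(f)=f_{d^*}$ is not correct in general, since several multi-degrees $d$ can share the same $i$-th coordinate. The correct statement is
\[
\init_{\ell_i}(f)=\sum_{d:\ d_i=d_i^*} f_d, \qquad d_i^*:=\max\{d_i : f_d\neq 0\},
\]
that is, the initial form picks out the \emph{collection} of multi-homogeneous pieces whose $i$-th degree coordinate is maximal, not a single piece. Since each $f_d$ lies in $J$ by multi-homogeneity, this sum is still in $J$, so your conclusion $\init_{\ell_i}(f)\in J$ and hence $\init_{\ell_i}(J)\subseteq J$ goes through unchanged. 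With that correction the proof is complete.
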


Among the maximal cones of $\trop(J)$ we may look for {\bf prime cones} whose associated initial ideal is binomial and prime, hence toric. In particular, any Gr\"obner degeneration associated to a weight in the interior of a maximal prime cone is in fact a {\bf toric degeneration}.

\begin{Definition}\label{def:totally pos}
Let $J\subset \mathbb R[x_1,\dots,x_N]$, then $J$ is {\bf totally positive} if there does not exist any nonzero $f\in J$ with $f\in\mathbb R_{\ge 0}[x_1,\dots,x_N]$.
In this case we may define the {\bf totally positive part of $\trop(J)$} as the subfan
\[
\trop^+(J):=\{w\in \trop(I):\init_{w}(J) \text{ is totally positive}\}.
\]
\end{Definition}

This is not the original definition but equivalent by \cite[Proposition 2.2]{SW05}.
The study of tropical totally positive varieties was initiated by Speyer and Williams in \cite{SW05} where they focus in particular on the tropical totally positive Grassmannain.
\begin{Proposition}[\cite{ET_posPolynomial,H_pos_poly}]\label{prop:tot pos and vanishing}
An ideal $J\subset \mathbb R[x_1,\dots,x_N]$ is totally positive if and only if $(\mathbb R_{>0})^n\cap V(\init_w(J))\not=\varnothing$ for some $w\in \mathbb R^N$.
\end{Proposition}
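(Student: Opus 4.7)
The direction ($\Leftarrow$) is immediate. Suppose $p \in (\mathbb R_{>0})^N \cap V(\init_w(J))$ for some $w \in \mathbb R^N$. If $J$ contained a nonzero $f = \sum c_a x^a$ with all $c_a \geq 0$, then $\init_w(f)$ is the nonempty subsum over the $a$'s maximizing $w \cdot a$, still with nonnegative and at least one strictly positive coefficient. Evaluating on the positive point $p$ gives $\init_w(f)(p) > 0$, contradicting $p \in V(\init_w(f)) \supseteq V(\init_w(J))$. So $J$ contains no nonzero polynomial with nonnegative coefficients, i.e.\ $J$ is totally positive.

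For the substantive direction ($\Rightarrow$), my plan is to mirror the positive analogue of the fundamental theorem of tropical geometry. Let $K = \mathbb R\{\!\{t^{\mathbb R}\}\!\}$ denote the field of generalized Puiseux series with its valuation $\val \colon K^* \to \mathbb R$, and let $K_{>0} \subset K^*$ be the multiplicative monoid of series whose lowest-order coefficient is positive. The central input, drawn from the cited work \cite{ET_posPolynomial, H_pos_poly}, is a \emph{positive Nullstellensatz}: $J$ is totally positive if and only if $V(J_K) \cap (K_{>0})^N \neq \varnothing$, where $J_K := J \cdot K[x_1,\dots,x_N]$. The underlying mechanism is a cone-separation (Hahn--Banach / Positivstellensatz) argument between the linear subspace $J$ and the convex cone $\mathbb R_{\geq 0}[x_1,\dots,x_N]$, producing a $\mathbb R_{>0}$-valued evaluation on $\mathbb R[x_1,\dots,x_N]/J$ which is then deformed into a $K_{>0}$-zero of $J$.

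Given such a point $p = (p_1,\dots,p_N) \in V(J_K) \cap (K_{>0})^N$, the remaining step is routine tropical lifting. Set $w_i := -\val(p_i)$ to match the $\max$-convention for $\init_w$ fixed in \S\ref{sec:init}, and let $c_i \in \mathbb R_{>0}$ be the leading coefficient of $p_i$. For any $f = \sum c_a x^a \in J$, each monomial $c_a p^a$ has $t$-order $\sum a_i \val(p_i) = -w \cdot a$, and the leading (lowest-order) coefficient of $f(p) \in K$ equals
$$\sum_{a :\, w \cdot a \text{ maximal}} c_a c^a \;=\; \init_w(f)(c).$$
Since $f(p) = 0$ this leading coefficient must vanish, so $c = (c_1,\dots,c_N) \in (\mathbb R_{>0})^N \cap V(\init_w(J))$. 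Note that $w \in \trop(J)$ is automatic: any monomial in $\init_w(J)$ would be strictly positive at $c$, contradicting $c \in V(\init_w(J))$.

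The principal obstacle is the positive Nullstellensatz of the middle paragraph; everything else is careful bookkeeping of leading terms, parallel to the classical Puiseux-lifting direction of the fundamental theorem of tropical geometry.
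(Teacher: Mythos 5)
The paper provides no proof of this Proposition: it is cited verbatim from Einsiedler--Tuncel and Handelman, and the text moves on. So there is no in-paper proof to compare against, only an evaluation of your argument.

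Your $(\Leftarrow)$ direction is correct and is the standard observation: a nonzero $f\in J\cap\mathbb R_{\ge 0}[x_1,\dots,x_N]$ would have $\init_w(f)\in\init_w(J)$ with nonnegative coefficients, hence $\init_w(f)(p)>0$ for any $p\in(\mathbb R_{>0})^N$. Likewise your reduction from the Puiseux form to the initial-ideal form is sound: with $w_i=-\val(p_i)$, the lowest-order coefficient of $f(p)$ is exactly $\init_w(f)(c)$, so a positive Puiseux zero of $J$ yields a positive real zero of $\init_w(J)$ for that $w$.

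The problem is the ``central input.'' You invoke a ``positive Nullstellensatz'' over Puiseux series --- $J$ totally positive iff $V(J_K)\cap(K_{>0})^N\ne\varnothing$ --- and attribute it to the cited works. But the Einsiedler--Tuncel theorem (which is what this Proposition \emph{is}, restated) is formulated precisely in the initial-ideal form, i.e.\ in terms of $(\mathbb R_{>0})^N\cap V(\init_w(J))$; the references do not state or prove the Puiseux version you rely on. So in effect you have replaced the cited result with an equivalent but differently-phrased hard statement, declared it cited, and then deduced the original from it by a routine leading-term computation. That is not a proof of the hard direction; the genuine content of $(\Rightarrow)$ is still outsourced, and outsourced to a claim not actually found in the sources. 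The cone-separation sketch you offer to back it up does not close the gap: $\mathbb R_{\ge 0}[x_1,\dots,x_N]$ has empty interior in any reasonable topology on $\mathbb R[x_1,\dots,x_N]$, so Hahn--Banach-style separation does not directly apply, and even if one separates the subspace $J$ from the cone, it is nontrivial to upgrade the separating linear functional to a multiplicative one (a ring homomorphism, i.e.\ an actual $K_{>0}$-point of $\spec(\mathbb R[x]/J)$). As written, the $(\Rightarrow)$ direction is a reformulation plus a sketch, not an argument; citing the references for the Proposition as stated, as the paper does, is the honest move here.
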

In particular, $\trop(J)$ and $\trop^+(J)$ are closed subfans of $\GF(I)$.

\subsection{Valuations}\label{sec:val}

Let $k$ be an algebraically closed field. 
Throughout the paper we denote by $A$ a $k$-algebra and domain.
For simplicity we may assume that $A$ has a {\bf positive (multi-)grading} that is $A=\bigoplus_{w\in \mathbb Z^m_{\ge 0}}A_w$.
Let $\Gamma$ be an abelian group that is totally ordered by $<$.
By a {\bf (Krull) valuation} on $A$ we mean a map $\nu:A\setminus \{0\}\to \Gamma$ that satisfies for all $a,b\in A$ and $c\in k$
\[
\nu(ab)=\nu(a)+\nu(b), \quad \nu(a+b)\le \max\{\nu(a),\nu(b)\},\quad \nu(ca)=\nu(a).
\]
If $\nu$ only satisfies $\nu(ab)\le \nu(a)+\nu(b)$ it is called a {\bf quasivaluation}.
Notice that the image of a valuation $\nu$ carries the structure of an additive semigroup. It is therefore called the {\bf value semigroup} of $\nu$ and we denote it by $S(A,\nu)$.
The {\bf rank} of $\nu$ is defined as the rank of the group completion of its semigroup inside $\Gamma$, $\nu$ is said to have {\bf full rank} if its rank coincides with the Krull dimension of $A$.
Every valuation induces a filtration on $A$ with filtered pieces for $\gamma \in \Gamma$ defined by
\[
\mathcal F_{\nu,\gamma}:=\{a\in A:\nu(a)\ge \gamma \}\quad  \left(\text{resp. }\mathcal F_{\nu,>\gamma}:=\{a\in A:\nu(a)> \gamma\}\right).
\]
The {\bf associated graded algebra} is $\gr_\nu(A):=\bigoplus_{\gamma\in \Gamma} \mathcal F_{\nu,\gamma}/\mathcal F_{\nu,>\gamma}$.
There is a natural quotient map from $A$ to $\gr_\nu(A)$ given by sending $f\in A$ to $\mathcal F_{\nu,\nu(a)}/\mathcal F_{\nu,>\nu(a)}$, denote its image by $\hat f\in \gr_\nu(A)$.
If the quotients $\mathcal F_{\nu,\gamma}/\mathcal F_{\nu,>\gamma}$ are at most one dimensional, then we say $\nu$ has {\bf one dimensional leaves}.
This property is desirable as it gives an identification
\[
\gr_\nu(A)\to k[S(A,\nu)], \quad \text{by} \quad \hat f_\gamma \mapsto \nu(f_\gamma),
\]
where $\hat f_\gamma\in \mathcal F_{\nu,\gamma}/\mathcal F_{\nu,>\gamma}$ and $f_\gamma\in A$ lies in the preimage of $\hat f_\gamma$ under the quotient map {$\hat{}:A\to \gr_\nu(A)$}.
It is a consequence of Abhyankar's inequality that full rank valuations have one dimensional leaves.
A vector space basis $\mathbb B$ of $A$ is called {\bf adapted to $\nu$}, if $\mathbb B\cap \mathcal F_{\nu,\gamma}$ is a vector space basis for the subvector space $\mathcal F_{\nu,\gamma}$ for all $\gamma\in \Gamma$.
The {\bf Newton--Okounkov body} of the valuation is defined as
\[
\Delta(A,\nu):=\text{conv}\left(\bigcup_{j>0} \{\nu(f)/j:f\in A_j\}\right)
\]
where $A=\bigoplus_{j\in \mathbb Z_{\ge0}}A_j$ is  a $\mathbb Z$-grading on $A$ that is refined by the given multigrading. If $S(A,\nu)$ is a finitely generated semigroup then $\Delta(A,\nu)$ is a rational polytope.

An important definition is the notion of a {\bf Khovanskii basis}: a subset $\mathcal B$ of $A$ whose image in $\gr_\nu(A)$ is an algebra generating set. 
It is not hard to see that if $\mathcal B$ is a Khovanskii basis for $\nu$ then the set $\{\nu(b):b\in\mathcal B\}$ generates the value semigroup \cite[Lemma 2.10]{KM16}. 
If there exists a finite Khovanskii basis for $\nu$ and $A$ then $\nu$ is called {\bf Khovanskii-finite}.

A valuation is called {\bf homogeneous} if it respects the grading on $A$, more precisely if $f\in A$ has homogeneous presentation $\sum_{i} f_i$ then $\nu(f)=\max\{\nu(f_i)\}$.
A valuation is {\bf fully homogeneous} if $\nu(f)=(\deg(f),\nu'(f))$, that is $S(A,\nu)\subset \mathbb Z_{\ge 0}^m\times \Gamma'$.
Any homogeneous valuation is obtained from a fully homogeneous one by composing with an isomorphism of semigroups \cite[Remark 2.6]{IW20}. 
So when studying homogeneous valuation we may without loss of generality assume they are fully homogeneous.
The Newton--Okounkov body of a fully homogeneous valuation can be computed as the intersection of the closure of the cone over $S(A,\nu)$ and the hyperplane indicating degree one in $\mathbb Z^m_{\ge 0}$.

\subsection{Valuations from tropicalization}\label{sec:val from trop}
Before explaining how valuations can be obtained from cones in the tropicalization of an ideal we need a bit more background on a slight generalization of initial ideals: a \emph{higher dimensional analogue of Gr\"obner theory} (see e.g. \cite{FR_Hahn_higherRank}).

Let $S=k[x_1,\dots,x_N]$ and consider as before $f=\sum c_ax^a\in S$. 
We call a matrix $M\in \mathbb Q^{d\times N}$ a {\bf weighting matrix} and together with a linear order $\prec$ on $\mathbb Z^d$ we define the {\bf initial form} of $f$ with respect to $M$ as
\[
\init_M(f):=\init_{M,\prec}(f):=\sum_{b: \ Mb=\max_{\prec}\{Ma:c_a\not =0\}} c_bx^b.
\]
As before we define the {\bf initial ideal} of an ideal $J\subset S$ {\bf with respect to $M$ (and $\prec$)} as $\init_{M,\prec}(J):=(\init_M(f):f\in J)$.
To simplify notation we drop the linear order from the index and simply assume that we have fixed it once and for all.
The Gr\"obner region also has a higher dimensional analogue: the {\bf $d^{\text{th}}$ Gr\"obner region} is denoted $\GR^d(J)$ and defined as the set of all weighting matrices $M\in \mathbb Q^{d\times N}$ such that there exists a term order $<$ on $S$ with $\init_<(J)=\init_<(\init_M(J))$.
Given $<$ let $C^d_<\subset \GR^d(J)$ be the set of all $M$ satisfying the previous relation.
We may also define equivalence classes of weighting matrices by setting $C_M:=\{M'\in \GR^d(J):\init_M(J)=\init_{M'}(J)\}$.
In the higher dimensional case several features of Gr\"obner theory still hold, among these the existence of standard monomial bases.
For example, $\GR^d(J)$ always contains the positive orthant $\mathbb Q_{\ge 0}^{d\times N}$ and if $J$ is homogeneous we have $\GR^d(I)=\mathbb Q^{d\times N}$ (see \cite[Lemma 8.7]{KM16} but be aware that the authors are using the minimum convention which introduces a sign).

We may use weighting matrices to define quasivaluation as follows.
Consider the quotient map $\pi:S\to S/J=A$ and denote by $\bar f$ the coset of $f$ in the quotient. 
For $f=\sum c_a x^a\in S$ set $\tilde \nu_M(f):=\max_\prec\{Ma:c_a\not =0\}$.
This defines a valuation $\tilde \nu_M:S\setminus\{0\}\to \mathbb Z^d$.
By \cite[Lemma 3.2]{KM16} there exists a quasivaluation $\nu_M:A\setminus\{0\}\to (\mathbb Z^d,\prec)$ given for $\bar f\in A$ by
\[
\nu_M(\bar f)=\min_\prec\{\tilde \nu_M(f):f\in \bar f\}
\]
called the {\bf quasivaluation with weighting matrix $M$}. Its associated graded algebra, denoted $\gr_M(A)$, satisfies $\gr_M(A)\cong S/\init_M(J)$.
In particular, this isomorphism gives us standard monomial bases for $\gr_M(A)$: let $<$ be a term order with $M\in C^d_<$ then $\mathbb B_<$ is a vector space basis for $\gr_M(A)$. 
Moreover, we may use $\mathbb B_<$ to compute the values of $\nu_M$: for $\bar f\in A$ let $\bar f=\sum_{\bar x^b\in \mathbb B_<} c_b\bar x^n$ be its expression in $\mathbb B_<$. Then
\[
\nu_M(\pi( f))=\max_{\prec}\{Mb:c_b\not =0\}.
\]

The main aim of Kaveh and Manon in \cite{KM16} is to establish a connection between the toric degenerations from prime cones in a tropicalization to toric degenerations obtained from Newton--Okounkov polytopes.
It relies on the quasivaluations with weighting matrices introduced above.
Suppose there exists a maximal prime cone $\tau\in\trop(J)$ and choose a basis $r_1,\dots,r_d\in \mathbb Q^N$ for the real vector space spanned by $\tau$. 
The quotient $\tau/\mathcal L_J$ is a strongly convex cone (see e.g. \cite[Lemma 2.13]{BMN}) so we may take a maximal linearly independent set of cosets of primitive ray generators of $\tau/\mathcal L_J$. 
Together with a basis of the lineality space this will be our choice for ${\bf r}:=\{r_1,\dots,r_d\}$.
In particular, we set $r_i=\ell_i$ for $1\le i\le m$, see Lemma~\ref{lem:grading and lineality}.
Define $M_{\bf r}:=(r_{ij})_{1\le i\le d,1\le j\le N}$,
where $r_{ij}$ is the $j^{\text{th}}$ entry in $r_i$, so the $r_i$ are the rows of $M_{\bf r}$.

\begin{Proposition}[Proposition 4.2 and 4.6 in \cite{KM16}]
If $\tau$ is a maximal prime cone in $\trop(J)$ then quasivaluation with weighting matrix $M_{\bf r}$ is in fact a full rank valuation with one dimensional leaves.
Its value semigroup is generated by the images of $\bar x_1,\dots,\bar x_N$ and it only depends on $C$.
\end{Proposition}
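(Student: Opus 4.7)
The plan is to reduce everything to the primality of $\init_{M_{\bf r}}(J)$, which is exactly the hypothesis that $\tau$ is a prime cone, combined with the identification $\gr_{M_{\bf r}}(A)\cong S/\init_{M_{\bf r}}(J)$ already established above.

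First I would upgrade the quasivaluation to a valuation. Suppose for contradiction that $\nu_{M_{\bf r}}(\bar f\bar g)\prec \nu_{M_{\bf r}}(\bar f)+\nu_{M_{\bf r}}(\bar g)$ for some $\bar f,\bar g\in A\setminus\{0\}$. Then the nonzero leading classes of $\bar f$ and $\bar g$ in $\gr_{M_{\bf r}}(A)$ would have vanishing product. Since $\tau$ is prime, $\gr_{M_{\bf r}}(A)\cong S/\init_{M_{\bf r}}(J)$ is an integral domain, so this is impossible; hence $\nu_{M_{\bf r}}$ is additive on products and therefore a genuine valuation.

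For the remaining assertions, fix a term order $<$ with $M_{\bf r}\in C^d_<$ (which exists because $\tau$ sits inside $\GR^d(J)$), so that the standard monomials $\BB_<$ form a $k$-basis of both $A$ and $\gr_{M_{\bf r}}(A)$. For any $\bar f=\sum_{\bar x^a\in\BB_<}c_a\bar x^a$ the formula $\nu_{M_{\bf r}}(\bar f)=\max_\prec\{M_{\bf r} a:c_a\neq 0\}$ recorded in the higher-dimensional Gr\"obner theory above exhibits every value as a $\mathbb Z_{\ge 0}$-combination of the columns $\nu_{M_{\bf r}}(\bar x_i)$ of $M_{\bf r}$; hence the value semigroup is generated by $\{\nu_{M_{\bf r}}(\bar x_i)\}_{i=1}^N$. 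Since $J$ is homogeneous, $\trop(J)$ is pure of dimension $\dim A=d$, so $M_{\bf r}$ has rank $d$ and its column semigroup therefore has rank $d$; this gives full rank, from which one-dimensional leaves follow by the Abhyankar inequality invoked in \S\ref{sec:val}. Dependence only on the class $C_{M_{\bf r}}$ is then clear: the formula computing $\nu_{M_{\bf r}}(\bar f)$ in the basis $\BB_<$ uses $M_{\bf r}$ only through its initial ideal, which is constant along $C_{M_{\bf r}}$.

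The main obstacle is the additivity step: without primality of $\init_{M_{\bf r}}(J)$ the minimum over preimages in the definition of $\nu_M$ can drop strictly below the sum, so one genuinely needs the hypothesis that $\tau$ is prime to rule this out. Everything else is then bookkeeping in the standard monomial basis together with the Abhyankar bound.
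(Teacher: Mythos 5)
This Proposition is quoted verbatim from Kaveh--Manon (Propositions 4.2 and 4.6 of \cite{KM16}); the paper itself supplies no proof, only the citation, so there is nothing internal to compare against. Your argument reconstructs the standard route used in \emph{loc.\,cit.}: primality of $\init_{M_{\bf r}}(J)$ forces $\gr_{M_{\bf r}}(A)\cong S/\init_{M_{\bf r}}(J)$ to be a domain, which kills any strict failure of additivity; the standard--monomial formula shows the value semigroup is generated by the columns of $M_{\bf r}$; purity of $\trop(J)$ gives $\mathrm{rank}(M_{\bf r})=d$ and hence full rank; Abhyankar gives one--dimensional leaves. Two small caveats. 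First, on the ``only depends on $C$'' clause: the formula $\nu_{M_{\bf r}}(\bar f)=\max_\prec\{M_{\bf r}b:c_b\neq 0\}$ uses the matrix $M_{\bf r}$ itself and not merely $\init_{M_{\bf r}}(J)$, so the literal values do change when one replaces $M_{\bf r}$ by another representative of $C_{M_{\bf r}}$; what is invariant is the \emph{index} $b$ at which the maximum is attained (equivalently the associated graded and the value semigroup up to a linear automorphism of $\mathbb Z^d$), and your phrasing ``uses $M_{\bf r}$ only through its initial ideal'' should be sharpened to say exactly that. Second, the paper's sign conventions for quasivaluations versus the filtration $\mathcal F_{\nu,\gamma}=\{a:\nu(a)\ge\gamma\}$ are not internally consistent (a subadditive $\nu$ does not make this filtration multiplicative), which is inherited from flipping Kaveh--Manon's min convention to max; your contradiction step is the right idea, but if you write it out in detail you will want to fix the sign so that the class of $\bar f\bar g$ genuinely vanishes in the graded piece of degree $\nu(\bar f)+\nu(\bar g)$ rather than failing to live there.
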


\subsection{Tropicalization from valuations}\label{sec:trop from val}

Fix a full rank Khovanskii-finite valuation $\nu:A\setminus\{0\}\to \mathbb Z^d$ on a positively graded algebra and domain $A$.
We may assume without loss of generality that $\dim_{\text{Krull}}(A)=d$ (if this was not the case we may apply \cite[Proposition 2.17(e)]{BrunsGubeladze} as the image of $\nu$ is in fact a monoid whose only unit is $0$).
Moreover, we may assume that the underlying total order on $\mathbb Z^d$ is the lexicographic order (if this was not the case we may follow Mora and Robbiano \cite{MoraRobbiano} and represent the order by a $d\times s$ matrix $M$ such that our order coincides with the lexicographic order on $M\mathbb Z^d$).
Choose a finite generating set $a_1,\dots,a_N$ for the value semigroup $S(A,\nu)$ and let $M_\nu$ be $d\times N$ matrix whose columns are $a_1,\dots,a_N$.
Notice that
\[
\text{rank}(M_\nu)=\dim(\text{im}(M_\nu)=\dim(\text{span}_{\mathbb Z}(a_1,\dots,a_n))=\dim(\text{cone}(a_1,\dots,a_n))=\text{rank}(\nu).
\]
In particular, $M_\nu$ is of full rank. 

\begin{Lemma}
Given the generators $a_1,\dots,a_N$ of the value semigroup $S(A,\nu)$ choose $b_1,\dots,b_N\in A$ with $\nu(b_i)=a_i$.
Then the set $\{b_1,\dots,b_N\}$ is a Khovanskii basis. 
\end{Lemma}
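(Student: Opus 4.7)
The plan is to proceed directly from the definition of a Khovanskii basis as a subset of $A$ whose image in $\gr_\nu(A)$ generates the associated graded algebra, and to exploit the one-dimensional leaves property that comes for free from full rank.

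First, since $\nu$ has full rank, Abhyankar's inequality (mentioned in \S\ref{sec:val}) implies that $\nu$ has one dimensional leaves. Thus for each $\gamma\in S(A,\nu)$ the quotient $\mathcal F_{\nu,\gamma}/\mathcal F_{\nu,>\gamma}$ is either zero or a one dimensional $k$-vector space, and the map
\[
\gr_\nu(A)\longrightarrow k[S(A,\nu)],\qquad \hat f\longmapsto c_f\,z^{\nu(f)},
\]
recalled in \S\ref{sec:val}, is an isomorphism of $k$-algebras (with $c_f\in k^\times$ a choice of scalar pinning down the one dimensional leaf).

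Next I would trace what this isomorphism does to the chosen lifts. By construction $\nu(b_i)=a_i$, so the image $\hat b_i\in\mathcal F_{\nu,a_i}/\mathcal F_{\nu,>a_i}$ maps to a nonzero scalar multiple of the monomial $z^{a_i}\in k[S(A,\nu)]$. Because scalars are units, the subalgebra of $k[S(A,\nu)]$ generated by the $c_{b_i}\,z^{a_i}$ equals the subalgebra generated by $z^{a_1},\dots,z^{a_N}$. The hypothesis that $a_1,\dots,a_N$ generate $S(A,\nu)$ as a semigroup is precisely the statement that $z^{a_1},\dots,z^{a_N}$ generate $k[S(A,\nu)]$ as a $k$-algebra.

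Pulling this generation statement back through the isomorphism shows that $\hat b_1,\dots,\hat b_N$ generate $\gr_\nu(A)$ as a $k$-algebra, which is the definition of $\{b_1,\dots,b_N\}$ being a Khovanskii basis for $\nu$. I do not expect any real obstacle here: the statement is essentially the converse of the implication \cite[Lemma 2.10]{KM16} quoted in the text, and the only subtlety is to keep track of the fact that the identification $\gr_\nu(A)\cong k[S(A,\nu)]$ needs the one-dimensional-leaves property, which is why full rank is used.
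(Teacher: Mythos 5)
Your argument is correct and is essentially the paper's proof: the paper disposes of the lemma in one line by appealing to the isomorphism $k[S(A,\nu)]\cong\gr_\nu(A)$ (available because the valuation is full rank, hence has one-dimensional leaves), which is exactly the identification you invoke. Your version simply spells out the intermediate steps — tracing $\hat b_i\mapsto c_{b_i}z^{a_i}$ and noting that scalars do not affect algebra generation — which the paper leaves implicit.
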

\begin{proof}
As $k[S(A,\nu)]\cong \gr_v(A)$ the elements $a_1,\dots,a_N$ form a set of algebra generators for $\gr_v(A)$. 
\end{proof}

Notice further that for dimension reasons the Khovsankii basis $\mathcal B=\{b_1,\dots,b_N\}$ is a generating set for $A$ as $\nu$ is full rank. Hence, we may use it to obtain a presentation of $A$. Define
\[
\pi_{\mathcal B}:S:=k[x_1,\dots,x_N]\to A, \quad \text{by} \quad x_i\mapsto b_i.
\]
Notice that $b_1,\dots,b_N$ not necessarily are of degree one in $A$. 
To have a \emph{graded} presentation of $A$ we endow the polynomial ring  $S$ with a (not necessarily standard) grading given by $\deg(x_i):=\deg(b_i)$.
Then $J_{\mathcal B}:=\ker(\pi_{\mathcal B})$ is a homogeneous ideal and
$S/J_{\mathcal B}\cong A$.
Our valuation $\nu$ is intimately related with the tropicalization of the ideal $J_{\mathcal B}$. 

\begin{Theorem}[\cite{B-quasival}]\label{thm:val and trop}
Let $\nu:A\setminus \{0\}\to \mathbb Z^{d}$ be a full rank valuation with finitely generated value semigroup and let $S/J_{\mathcal B}\cong A$ be the presentation induced by a Khovanskii basis $\mathcal B=\{b_1,\dots,b_n\}$.
Then there exists a maximal prime cone $\tau\in \trop(J_{\mathcal B})$ such that $\nu=\nu_\tau$ and 
\[
k[S(A,\nu)] \cong \gr_\nu(A)\cong S/\init_\tau(J_{\mathcal B}).
\]
In particular, the toric variety associated with $\Delta(A,\nu)$ is the normalization of the central fibre of the Gröbner degeneration $\proj(S/\init_\tau(J_{\mathcal B}))$.
\end{Theorem}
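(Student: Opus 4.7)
The plan is to construct the cone $\tau$ directly from the valuation data: form a weighting matrix whose columns record the $\nu$-values of the Khovanskii basis, show that the induced quasivaluation on $A$ recovers $\nu$, and deduce that the corresponding initial ideal of $J_{\mathcal B}$ is the toric ideal of $S(A,\nu)$. The statement then reduces to applying the Kaveh--Manon dictionary recalled in \S\ref{sec:val from trop} to the cone carved out this way.

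First I would let $M_\nu \in \mathbb Z^{d\times N}$ be the integer matrix whose $i$-th column is $\nu(b_i)$; these columns generate $S(A,\nu)$ because $\mathcal B$ is a Khovanskii basis. Write $\nu_{M_\nu}:A\setminus\{0\}\to\mathbb Z^d$ for the quasivaluation with weighting matrix $M_\nu$. One inequality is automatic: for any lift $f=\sum c_a x^a \in S$ of $\bar g\in A$, the valuation axioms give $\nu(\bar g)=\nu(\sum c_a b^a)\le\max\{\nu(b^a):c_a\neq 0\}=\tilde\nu_{M_\nu}(f)$, hence $\nu(\bar g)\le \nu_{M_\nu}(\bar g)$. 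For the opposite inequality I would run a standard subtraction argument: because $\gr_\nu(A)\cong k[S(A,\nu)]$ is generated by the images $\hat b_i$, the graded piece representing $\bar g$ can be written as a polynomial in the $\hat b_i$; lifting such an expression and subtracting from $\bar g$ strictly decreases the $\nu$-value, and iterating (after reducing the total order $\prec$ to a lexicographic one so that it is a well-order on $S(A,\nu)$) yields a lift $f$ with $\tilde\nu_{M_\nu}(f)=\nu(\bar g)$. This forces $\nu=\nu_{M_\nu}$, and the structure theorem for quasivaluations from weighting matrices then identifies $\gr_\nu(A)\cong S/\init_{M_\nu}(J_{\mathcal B})$.

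From this point the rest unpacks cleanly. Since $\gr_\nu(A)\cong k[S(A,\nu)]$ is a $d$-dimensional domain and a semigroup algebra, $\init_{M_\nu}(J_{\mathcal B})$ is binomial, prime, and contains no monomials, so $M_\nu$ represents a point of $\trop(J_{\mathcal B})$ whose equivalence class $C_{M_\nu}$ is a prime cone. Extracting primitive ray generators of $C_{M_\nu}$ modulo the lineality space $\mathcal L_{J_{\mathcal B}}$ (which contains the grading vectors of Lemma~\ref{lem:grading and lineality}) and feeding them back into the Kaveh--Manon construction produces a cone $\tau$ in the ordinary Gröbner fan with $\init_\tau(J_{\mathcal B})=\init_{M_\nu}(J_{\mathcal B})$ and $\nu_\tau=\nu_{M_\nu}=\nu$; full rank of $\nu$ forces $\dim\tau=d=\dim\trop(J_{\mathcal B})$, so $\tau$ is maximal, and the isomorphism $S/\init_\tau(J_{\mathcal B})\cong k[S(A,\nu)]$ identifies $\proj(S/\init_\tau(J_{\mathcal B}))$ with the projective toric variety of the cone over $\Delta(A,\nu)$ up to normalization. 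The main obstacle throughout is making the subtraction step rigorous when $\mathbb Z^d$ is not naturally well-ordered: one must first transport $\nu$ through Mora--Robbiano's matrix representation of $\prec$ so that successive $\nu$-values of remainders descend in a well-ordered subset, and simultaneously verify that this reduction leaves both the initial ideal and the cone $\tau$ unchanged.
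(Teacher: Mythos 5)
Your proposal follows essentially the same route as the paper's proof: both form the weighting matrix $M_\nu$ from the $\nu$-values of the Khovanskii basis, identify $\nu$ with the quasivaluation $\nu_{M_\nu}$, argue that $\init_{M_\nu}(J_{\mathcal B})$ is the prime toric ideal of $S(A,\nu)$, and then pass to an ordinary weight vector $w$ lying in a maximal cone $\tau$ of $\trop(J_{\mathcal B})$. The paper discharges each of these steps by citation to \cite{B-quasival} (its Theorem~1, Lemma~3, Corollary~3, Proposition~1 and Equation~3.3), whereas you sketch the underlying arguments instead; the well-ordering concern you flag for the subtraction step is precisely the technical content handled there.
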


\begin{proof}
Notice that $M:=M_\nu$ the weighting matrix of $\nu$ is of full rank $d \le N$ as $\nu$ is of full rank.
Then by \cite[Theorem 1]{B-quasival} the initial ideal $\init_M(J_{\mathcal B})$ is prime as the value semigroup $S(A,\nu)$ is generated by $\nu(b_1),\dots,\nu(b_N)$. 
Here we use the total order on  $\mathbb Z^{d}$ given by $\nu$.
We may restrict our attention to the case of usual initial ideals as by \cite[Lemma 3]{B-quasival} there exists a weight vector $w\in \mathbb Z^{N}$ such that $\init_w(J_{\mathcal B})=\init_M(J_{\mathcal B})$.
It is left to show that
\begin{itemize}
  \setlength{\itemsep}{1pt}
  \setlength{\parskip}{0pt}
  \setlength{\parsep}{0pt}
    \item $w \in \trop(J_{\mathcal B})$;
    \item $k[S(A,\nu)] \cong S/\init_\tau(J_{\mathcal B})$, where $w \in \tau^\circ$.  
\end{itemize}
The first item follows from \cite[Corollary 3]{B-quasival}.
For the second consider the quasivaluation $\nu_M$.
By Proposition 2.1 $\nu_M$ is a valuation and by \cite[Proposition 1]{B-quasival} it satisfies $\nu=\nu_M$. 
Further, by \cite[Equation 3.3]{B-quasival} we have $S/\init_w(J_{\mathcal B}) \cong \gr_{\nu}(A) \cong k[S(A,\nu)]$.
\end{proof}

\section{Totally positive cluster varieties}\label{sec:cluster}

We assume the reader is familiar with Fomin--Zelevisnky's cluster algebras \cite{FZ02,FZ03,FZ07}. 
In particular, the (positive) Laurent phenomenen \cite{FZ02,GHKK14} and the notion of $\gv$-vectors.
We review the basic notions of cluster varieties and their tropicalization to set up our notation.
As we use both algebraic and geometric language for cluster algebras and varieties, the fact that Fomin--Zelevisnky's cluster variables are local coordinates of the corresponding $\mathcal A$-variety is important.

\subsection{Cluster algebras and varieties}\label{sec:ca and cv}

In this section we assume $k=\mathbb R$ as this is the case we need for the proof of Theorem~\ref{thm:main}.
For $n\in \mathbb Z$ let $[n]_+:=\max\{0,n\}$ and $[n]_-:=\min\{0,n\}$.

\begin{Definition}\label{def:fixed_data}
The {\bf fixed data} $\Gamma$ refers to the following information
\begin{itemize}
  \setlength{\itemsep}{1pt}
  \setlength{\parskip}{0pt}
  \setlength{\parsep}{0pt}
    \item a lattice $N$ with a skew-symmetric bilinear form $\{\cdot,\cdot\}: N\times N\to \mathbb Q$;
    \item a saturated sublattice $N_{\rm mut}\subseteq N$;
    \item two sets of indices $I_{\rm mut}\subset I$ that satisfy $|I|={\rm rank}(N)=:n+m$ and $|I_{\rm mut}|={\rm rank}(N_{\rm mut})=:n$; 
    \item positive integers $d_i$ for $i\in I$ with greatest common divisor 1;
    \item a sublattice $N^\circ\subseteq N$ of finite index such that $\{N_{\rm mut},N^\circ\}\subseteq \mathbb Z$ and $\{N,N_{\rm mut}\cap N^\circ\}\subseteq \mathbb Z$;
    \item the dual lattices  $M:={\rm Hom}_{\mathbb Z}(N,\mathbb Z)$ and $M^\circ:={\rm Hom}_{\mathbb Z}(N^\circ,\mathbb Z)$. 
    \end{itemize}
\end{Definition}
Given $\Gamma$ {\bf seed data} is a tuple $s=(e_i:i\in I)$ such that $\{e_i:i\in I\}$ is a basis of $N$, $\{e_i:i\in I_{\rm mut}\}$ is a basis of $N_{\rm mut}$ and $\{d_ie_i:i\in I\}$ is a basis of $N^\circ$. 
We define the $n\times n$-rectangular integer matrix whose entries are $\epsilon_{{\rm mut};s}:=(\{e_i,e_jd_j\})_{i,j\in I_{\rm mut}}$ and $\epsilon_{{\rm f};s}:=(\{e_i,e_jd_j\})_{i\in I_{\rm mut}, j\not\in I_{\rm mut}}$.
The transpose of $\epsilon_s:=(\epsilon_{\rm mut} \ \epsilon_{\rm f})\in \mathbb Z^{n\times(n+m)}$ is called an {\bf extended exchange matrix} and we denote it by $\tilde B_s=(b_{ij}^s)_{i\in I,j\in I_{\rm mut}}$.
A {\bf fully extended exchange matrix} is an $(n+m)$-square matrix of form
\begin{equation}\label{eq:Mp*}
\widetilde{B_s}=
\left(\begin{matrix}
\epsilon^{T}_{{\rm{uf}};s} & A \\
\epsilon^{T}_{{\rm f};s} & \ast
\end{matrix}
\right)
\end{equation}
where  $A=-
\text{diag}(d_1,\dots,d_{|I_{\rm mut}|})\ 
\epsilon_{{\rm f};s}\ 
\text{diag}(d_{|I_{\rm mut}|+1},\dots,d_{|I|})^{-1}$, and $\ast$ is an integral $(m\times m)$-matrix. 

\begin{Definition}\label{def:mut seed data}
Given the seed data $s$ and an index $k\in I_{\rm mut}$ we define the {\bf mutation in direction $k$ of $s$} to be the seed data $\mu_k(s)=(e_i':i\in I)$ where
\begin{equation}\label{eq:trop mut N}
e_i':=\left\{\begin{matrix}
e_i+[\epsilon_{ik}]_+e_k & i\not =k\\
-e_k & i=k
\end{matrix}\right.
\end{equation}
If $s'$ is obtained from $s$ by a sequence of mutations we write $s'\sim s$.
\end{Definition}

Let $\{f_{i}:i\in I\}$ be the basis of $M^\circ$ dual to the basis $\{e_id_i:i\in I\}$ of $N^\circ$ with respect to $s$ and let $\{f'_i:i\in I\}$ be the dual basis with respect to $\mu_k(s)$. Then
\begin{equation}\label{eq:trop mut M}
    f_i'=\left\{\begin{matrix} -f_k+\sum_{j\in I, j\not =k} [-\epsilon_{kj}]_+f_j & k=i \\ f_i & k\not =i\end{matrix}\right.
\end{equation}

We associate a real algebraic tori to the lattices $N^\circ$ and $M$, that is $T_{N^\circ}(\mathbb R)=N^\circ\otimes_{\mathbb Z}\mathbb R=\spec(\mathbb R[M^\circ])$ and similarly for $T_M(\mathbb R)$.
Given seed data $s$ we obtain coordinates on the tori:
\begin{eqnarray*}
    T_{N^\circ;s}&:=&T_{N^\circ;s}(\mathbb R)=\spec\left(\mathbb R[z^{\pm f_{i;s}}:i\in I]\right)\\
    T_{M;s}&:=&T_{M;s}(\mathbb R)=\spec\left(\mathbb R[z^{\pm e_{i;s}}:i\in I]\right).
\end{eqnarray*}
For every index $k\in I_{\rm mut}$ and every seed data $s$ we define birational maps $\mu_{k;\mathcal A}:T_{N^\circ;s}\dashrightarrow T_{N^\circ;\mu_k(s)}$ and
$\mu_{k;\mathcal X}:T_{M;s}\dashrightarrow T_{M;\mu_k(s)}$
by their pullbacks
\begin{eqnarray*}
    \mu_{k;\mathcal A}^*:\mathbb R(M^\circ) &\to& \mathbb R(M^\circ), \quad z^q\mapsto z^q(1+z^{v_k})^{-\langle e_k,q\rangle},\\
    \mu_{k;\mathcal X}^*:\mathbb R(N) &\to& \mathbb R(N), \quad z^p\mapsto z^p(1+z^{e_k})^{-\{ n,d_ke_k\}},
\end{eqnarray*}
where $q\in M^\circ, p\in N$ and $v_k:=\{e_k,\cdot\}\in M^\circ$ for all $k\in I_{\rm mut}$. 
More generally we write $A_{i;s}:=z^{f_{i;s}}$ where $\{f_{i;s}:i\in I\}$ is the basis of $M^\circ$ associated with the seed data $s=(e_i:i\in I)$.
In these coordinate systems the birational map $\mu_{k;\mathcal A}$ sends $(A_{1;s},\dots,A_{n+m;s})$ to $(A_{1;\mu_k(s)},\dots,A_{n+m;\mu_k(s)})$ (which reveals Fomin--Zelevinsky's original definition of mutation for cluster variables) given by $A_{j;\mu_k(s)}=A_{j;s}$ for $i\not=k$ and
\begin{eqnarray}\label{eq:exchange rel}
A_{k;\mu_k(s)}=A_{k;s}^{-1}\left(\prod_{b^s_{ik>0}}A_{i;s}^{b_{ik}^s}+\prod_{b_{ik}^s<0}A_{i;s}^{-b^s_{ik}}\right).    
\end{eqnarray}
The above relation between the local coordinate systems is called an {\bf exchange relation}. Notice that in particular the coordinates associated with indices in $I\setminus I_{\rm mut}=\{n+1,\dots,n+m\}$ are the same for all seeds. We therefore leave out the index $s$ when referring to these coordinates and simply denote them by $A_{n+1},\dots,A_{n+m}$.

$\mathcal A$- respectively $\mathcal X$-varieties are schemes glued from several copies of the tori $T_{N^\circ;s}$ respectively $T_{M;s}$-- one for every seed-- with transition maps given by $\mu_{k;\mathcal A}$ respectively $\mu_{k;\mathcal X}$.

\begin{Definition}
The {\bf real $\mathcal A$-variety} $\mathcal A_{\Gamma}(\mathbb R)$ associated with the fixed data $\Gamma$ and the seed data $s$ is the scheme glued from several copies of $T_{N^\circ}(\mathbb R)$--one for each $s'\sim s$-- whose transition functions are sequences of the birational mutation maps $\mu_k$.
\end{Definition}

Notice that the transition functions are given by substraction-free expressions in local coordinates. In particular, we may define the cluster variety over a semifield rather than a field, for example over the semifield $\mathbb R_{>0}$.

\begin{Definition}
The {\bf totally positive part $\mathcal A_{\Gamma}(\mathbb R_{>0})\subset \mathcal A_\Gamma(\mathbb R)$} of the real cluster variety $\mathcal A_{\Gamma}(\mathbb R)$ is the set obtained by gluing the totally positive part of the torus
\[
N^\circ\otimes_{\mathbb Z} \mathbb R_{>0}\subset T_N(\mathbb R)
\]
along the transition maps defined by sequences of mutations $\mu_k$.
\end{Definition}

The coordinates used to define the positivity conditions on a cluster variety are all local coordinates of the tori associated with seeds. These coordinates are called the {\bf cluster variables} and they are the generators of the {\bf cluster algebra (of geometric type)}:
\[
A(\Gamma):=\mathbb R[A_{n+1},\dots,A_{n+m}][\mathcal V]\subset \mathcal F,
\]
where $A_{n+1},\dots,A_{n+m}$ are the {\bf frozen cluster variables}, $\mathcal V$ is the set of all mutable cluster variables, i.e. $\mathcal V=\{A_{i;s}:i\in I_{\rm mut},s \text{ seed}\}$, and $\mathcal F$ is the ambient field of rational functions $\mathbb R(M^\circ)=\mathbb R(A_{i;s}:i\in I)$ for any seed $s$. 
A fundamental result about cluster algebras is the following.
\begin{Theorem}[Laurent phenomenon \cite{FZ03},\cite{GHKK14}]
Let $A_{i;s'}$ be a cluster variable in a cluster algebras of geometric type $A(\Gamma)$ and $s$ a seed. 
Then $A_{i;s'}\in \mathbb Z_{\ge 0}[A_{1;s}^{\pm 1},\dots,A_{n;s}^{\pm 1},A_{n+1},\dots,A_{n+m}]$.
\end{Theorem}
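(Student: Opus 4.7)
The plan is to establish the Laurent property (that $A_{i;s'}$ belongs to the integer Laurent ring in the coordinates of $s$) and then the positivity of the coefficients separately. For Laurentness I would induct on the mutation distance $d(s,s')$ in the exchange graph. The base case $s=s'$ is tautological. For the inductive step, writing $s' = \mu_k(s'')$ with $d(s,s'') < d(s,s')$, the only nontrivial case is $i = k$, where the exchange relation expresses $A_{k;s'}$ as $A_{k;s''}^{-1}$ times a binomial in the remaining $A_{j;s''}$. The obstruction is that the inductive hypothesis only gives $A_{k;s''}$ as a Laurent polynomial in the $A_{\cdot;s}$, not its reciprocal. The classical Fomin--Zelevinsky resolution is the \emph{caterpillar lemma}: one checks Laurentness simultaneously at a coordinated family of seeds near $s$ (including $s$ and all $\mu_k(s)$), then uses the two-term coprime structure of the exchange binomials to extract the needed cancellation.

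For positivity I would invoke the scattering diagram and theta function machinery of GHKK14, available under the standing assumption that $\mathcal A_\Gamma$ satisfies the full Fock--Goncharov conjecture. Construct the consistent scattering diagram $\mathfrak D^{\mathrm{in}}_s$ in $M^\circ_{\mathbb R}$ and, for each $q\in N$, define the theta function
\[
\vartheta_q \;=\; \sum_{\gamma} c(\gamma)\, z^{F(\gamma)},
\]
where $\gamma$ ranges over broken lines with asymptote $q$ ending in the positive chamber of $s$, $c(\gamma) \in \mathbb Z_{\ge 0}$ is the multiplicity recorded from the wall-crossing automorphisms encountered along $\gamma$, and $z^{F(\gamma)}$ is a Laurent monomial in the $A_{\cdot;s}$. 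The crucial identification is that every mutable cluster variable $A_{i;s'}$ equals a theta function $\vartheta_{q(i,s')}$ for a specific tropical index $q(i,s')\in N$ attached to the cluster chamber of $s'$; this is proved by induction on mutation distance, tracking how the wall-crossing at each mutation transforms the theta function decomposition. Positivity then follows because the coefficients of the Laurent expansion literally count broken lines with positive integer weights; frozen variables are common to all seeds and enter multiplicatively, causing no difficulty.

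The principal obstacle is constructing the scattering diagram consistently in the generality permitted here -- fixed data with frozen directions, $N^\circ\subsetneq N$, and a skew-symmetrizable rather than skew-symmetric bilinear form -- and verifying finiteness of the relevant broken-line sums. This is the technical core of GHKK14 and proceeds through a principal-coefficient construction followed by a limiting/specialization argument controlling the wall-crossing automorphisms around every codimension-two cell. Once in place, the combinatorial positivity of broken-line multiplicities yields both assertions simultaneously, in fact making the caterpillar argument for Laurentness logically redundant.
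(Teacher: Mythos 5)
This theorem is stated in the paper with the tag ``Laurent phenomenon \cite{FZ03},\cite{GHKK14}'' and is cited from those references rather than proved; there is no proof in the paper to compare against. Your outline is a reasonable roadmap of how the literature establishes the result---Fomin--Zelevinsky's caterpillar argument for Laurentness over $\mathbb Z$, then the GHKK scattering-diagram and broken-line machinery for positivity of coefficients, with the latter subsuming the former as you observe.

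Two small inaccuracies in the sketch are worth flagging. First, the theta functions that restrict to regular functions on $\mathcal A_\Gamma$ are indexed by the Fock--Goncharov tropical points of the \emph{dual} $\mathcal X$-variety, $\mathcal X_{\Gamma^\vee}(\mathbb Z^T)$, which after fixing a seed identifies with $M^\vee \cong M^\circ$, not with $N$; the asymptote $q$ of a broken line lives there. Second, positivity of the Laurent expansion of a cluster variable does \emph{not} require the full Fock--Goncharov conjecture: GHKK prove it unconditionally (for skew-symmetrizable data) by working in the $\mathcal A_{\mathrm{prin}}$ scattering diagram and specializing principal coefficients; the full FG conjecture is only needed for the stronger statement that theta functions form a \emph{basis} of the algebra, which is not part of the theorem you are proving. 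Neither slip affects the validity of the overall plan, but the second one is worth correcting because the paper invokes the Laurent phenomenon even before the FG hypothesis is imposed (it is used to embed $A(\Gamma)$ into the ring of regular functions on $\mathcal A_\Gamma$), so the theorem had better not presuppose FG.
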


\subsection{Principal coefficients}\label{sec:prin}
Given $\Gamma$ we construct another cluster variety from the following doubled data:

\begin{Definition}
Let $\Gamma$ be fixed data. The associated {\bf fixed data with principal coefficients} $\Gamma_{\prin}$ is defined by
\begin{enumerate}
  \setlength{\itemsep}{1pt}
  \setlength{\parskip}{0pt}
  \setlength{\parsep}{0pt}
    \item the lattice $\widetilde{N}:=N\oplus M^\circ$ with skew-symmetric bilinear form  $\{(n_1,m_1),(n_2,m_2)\} := \{n_1,n_2\} + \langle n_1,m_2\rangle - \langle n_2,m_1\rangle$;
    \item the sublattice $\widetilde{N}_{\rm uf}:=N_{\rm mut}\oplus \{0\}\subset\widetilde{N}$;
    \item the index sets $\widetilde{I}_{\rm uf}\subset \widetilde{I}=I\cup I'$, where $I'$ another disjoint copy of $I$ and $\widetilde{I_{\rm mut}}=I_{\rm mut}\subset I$;
    \item the sublattice $\widetilde{N}^\circ$ is $N^\circ\oplus M$
    \item the dual lattices are $\widetilde{M}:={\rm Hom}_{\mathbb Z}(\widetilde{N},\mathbb Z)=M\oplus N^\circ$ and $\widetilde{M}^\circ:={\rm Hom}_{\mathbb Z}(\widetilde{N}^\circ,\mathbb Z)=M^\circ \oplus N$.
\end{enumerate}
\end{Definition}
Given an initial seed $s=(e_i:i\in I)$ for $\Gamma$, we define a seed for $\Gamma_{\prin}$ by $\tilde s:=(\tilde e_i:i\in \widetilde{I})$, where $\tilde e_i:=\{(e_i,0) ,(0,f_j) : i\in I,j\in I' \}$.
The associated basis $\{\tilde f_i:i\in \widetilde{I}\}$ of $\widetilde{M}^\circ$ is given by $\tilde f_i=\{(f_i,0) ,(0,e_j) :i\in I, j\in I'\}$.

\begin{Definition}
The {\bf real $\mathcal A$-variety with principal coefficients} $\mathcal A_{\Gamma^{\prin}_{s}}(\mathbb R)$ is the real $\mathcal A$-variety associated with the fixed data $\Gamma_{\prin}$ and initial seed $\tilde s$.
\end{Definition}

As $\widetilde{I}_{\rm uf}=I_{\rm mut}$ mutations with respect to any index in $I'$ are not permitted. 
In particular, given $\tilde s$ the variables $z^{\tilde f_i}=z^{(0,e_i)}$ for $i\in I'$ may vanish as they never get inverted.
We partially compactify $\mathcal A_{\Gamma_{\prin}^s}(\mathbb R)\subset \bar{\mathcal A}_{\Gamma_{\prin}^s}(\mathbb R)$ 
by allowing the variables $\tilde X_i:=z^{(0,e_i)}$ to vanish, 
which corresponds to gluing patches of form $T_{N^\circ}(\mathbb R)\times \mathbb A^{n+m}_{\mathbb R}$, 
where $\mathbb A^{n+m}_{\mathbb R}$ has coordinates $X_1,\dots,X_{n+m}$ with $n+m=|I|$. 

\begin{Proposition}[Corollary 5.3 and Remark B.10 in \cite{GHKK14}]
\label{prop:degen Aprin to torus}
The map $\pi:N^\circ\oplus M\to M$ given by $(n,m)\mapsto m$ naturally extends to $\pi:\mathcal A_{\Gamma_{\prin}^s}\to T_M$ and compactifies to a toric degeneration
\[
\pi: \bar{\mathcal A}_{\Gamma_{\prin}^s}(\mathbb R)\to \mathbb A^{n+m}_{\mathbb R},
\]
where $X_i=z^{e_i}$ pulls back to $\tilde X_i=z^{(0,e_i)}$. The fibre over the origin satisfies $\pi^{-1}(0)=T_{N^\circ}(\mathbb R)$ while the generic fibre over $(1,\dots,1)\in \mathbb A^{n+m}_{\mathbb R}$ satisfies $\pi^{-1}(1,\dots,1)=\mathcal A_\Gamma(\mathbb R)$.
\end{Proposition}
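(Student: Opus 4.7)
The plan is to construct the morphism $\pi$ from the underlying lattice projection, show its pullback extends to a well-defined map on the glued variety, verify it extends across the partial compactification, and finally identify the two fibres by direct computation on coordinate rings.

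First I would set up the relevant lattice map on the $\mathcal A_\prin$ side. The projection $\pi^\circ:\widetilde N^\circ = N^\circ \oplus M \to M$, $(n,m)\mapsto m$, dualizes to $N \to \widetilde M^\circ = M^\circ \oplus N$, $n\mapsto (0,n)$, giving on character algebras the map $z^n\mapsto z^{(0,n)}=\tilde X_n$. This yields $\pi_{\tilde s}:T_{\widetilde N^\circ;\tilde s}\to T_M$ on the initial patch. The key compatibility to check is that $\pi^\circ$ is preserved by the mutation $\mu_k$ for $k\in I_{\rm mut}$: since $\tilde e_k=(e_k,0)$ has trivial $M$-component, the mutation formula~\eqref{eq:trop mut N} applied on $\widetilde N^\circ$ only alters the $N^\circ$-component of each basis vector, so $\pi^\circ\circ\mu_k=\pi^\circ$. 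Thus the $\pi_{\tilde s'}$ glue across all seeds $\tilde s'\sim \tilde s$ into a morphism $\pi:\mathcal A_{\Gamma_\prin^s}\to T_M$.

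Next I would identify the pullbacks $\tilde X_i=\pi^*(X_i)=z^{(0,e_i)}\in\mathbb R[\widetilde M^\circ]$ as mutation-invariant global regular functions on $\mathcal A_{\Gamma_\prin^s}$. By the mutation rule $\mu_{k;\mathcal A}^*(z^{\tilde q})=z^{\tilde q}(1+z^{\tilde v_k})^{-\langle\tilde e_k,\tilde q\rangle}$ with $\tilde v_k=(v_k,e_k)$, the relevant pairing $\langle (e_k,0),(0,e_i)\rangle$ vanishes because the pairing $\widetilde N^\circ\times \widetilde M^\circ\to\mathbb Z$ respects the direct sum decomposition; hence $\mu_k^*(\tilde X_i)=\tilde X_i$. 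Because $I'\cap I_{\rm mut}=\varnothing$, the functions $\tilde X_i$ never get inverted by any mutation, so the Laurent phenomenon together with invariance allows us to adjoin the locus $\tilde X_i=0$: the gluing data of $\mathcal A_{\Gamma_\prin^s}$ extends naturally to patches of the form $T_{N^\circ;s'}\times\mathbb A^{n+m}_{\mathbb R}$, and this partial compactification is exactly $\bar{\mathcal A}_{\Gamma_\prin^s}(\mathbb R)$. On each patch $\bar\pi$ is a toric morphism, hence flat, so the global $\bar\pi$ is flat onto $\mathbb A^{n+m}_{\mathbb R}$.

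Finally I would compute the two distinguished fibres. Over $(1,\dots,1)$, every $\tilde X_i$ equals $1$, so $\tilde v_k=(v_k,e_k)$ specializes to $v_k$ in the exchange binomial, recovering precisely the mutations defining $\mathcal A_\Gamma(\mathbb R)$; a direct check on the coordinate rings $\mathbb R[\widetilde M^\circ]/(z^{(0,e_i)}-1)\cong \mathbb R[M^\circ]$ compatible across patches identifies $\bar\pi^{-1}(1,\dots,1)\cong\mathcal A_\Gamma(\mathbb R)$. Over the origin, each monomial $z^{\tilde v_k}=z^{v_k}\tilde X_k$ vanishes, so $(1+z^{\tilde v_k})^{-\langle\tilde e_k,\tilde q\rangle}\equiv 1$ and every mutation $\mu_{k;\mathcal A}^*$ restricts to the identity; all patches are glued trivially to a single copy of $T_{N^\circ}(\mathbb R)$, which gives $\bar\pi^{-1}(0)=T_{N^\circ}(\mathbb R)$.

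The main technical obstacle is ensuring that the partial compactification is well-defined globally: one must verify that after adjoining $\{\tilde X_i=0\}$ on one patch, the mutation birational maps $\mu_{k;\mathcal A}$ between patches extend to regular isomorphisms on the boundary strata, not only on the open torus. This comes down to the fact that each such mutation map, when written in the coordinates $(A_{j;\tilde s'},\tilde X_1,\dots,\tilde X_{n+m})$, is a polynomial in $\tilde X_1,\dots,\tilde X_{n+m}$ with coefficients in $\mathbb R[A_{j;\tilde s'}^{\pm 1}]$ (a consequence of positivity of the Laurent phenomenon applied to $\Gamma_\prin$), so the gluing is regular across $\tilde X_i=0$. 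Once this is in place, the statement follows.
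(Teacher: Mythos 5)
This Proposition is cited in the paper from \cite{GHKK14} (Corollary~5.3 and Remark~B.10) without an in-paper proof, so there is nothing in this paper to compare against directly; your proposal faithfully reconstructs the standard GHKK-style argument and reaches the correct conclusion. Two small imprecisions are worth flagging. First, the assertion ``$\pi^\circ\circ\mu_k=\pi^\circ$'' is not quite meaningful as stated, since $\mu_k$ is a change of seed basis, not an endomorphism of the lattice $\widetilde N^\circ$; what you actually need (and what you do verify) is that the pullbacks $\tilde X_i = z^{(0,e_i)}$ are fixed by $\mu_{k;\mathcal A}^*$, i.e.\ that $\langle \tilde e_k,(0,e_i)\rangle = 0$ because the pairing on $\widetilde N\times\widetilde M^\circ$ respects the direct sum decomposition --- that pairing computation, rather than the slogan about $\pi^\circ$, is the substance of the gluing argument. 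Second, ``on each patch $\bar\pi$ is a toric morphism, hence flat'' is the wrong justification (toric morphisms need not be flat); the correct reason is that on each patch $\bar\pi$ is the projection $T_{N^\circ}\times\mathbb A^{n+m}\to\mathbb A^{n+m}$, a trivial bundle, hence flat, and flatness is local on the source. The rest is sound: the vanishing $\langle\tilde e_k,(0,e_i)\rangle=0$ gives the descent of $\pi$; positivity of the Laurent phenomenon for $\Gamma_\prin$ justifies extending across $\tilde X_i=0$; and the observation that $z^{\tilde v_k}=z^{(v_k,0)}\tilde X_k$, so that $z^{\tilde v_k}\mapsto 0$ in the quotient by the $\tilde X_i$ and the mutation pullbacks become the identity on the zero fibre, correctly identifies $\pi^{-1}(0)=T_{N^\circ}(\mathbb R)$, just as setting all $\tilde X_i=1$ specializes the mutation data of $\Gamma_\prin$ to that of $\Gamma$ and identifies $\pi^{-1}(1,\dots,1)=\mathcal A_\Gamma(\mathbb R)$.
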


By construction the positive part $\mathcal A_\Gamma(\mathbb R_{>0})$ can be traced through the above degeneration: we define $\bar{\mathcal A}_{\Gamma_{\prin}^s}(\mathbb R_{>0})$ as the cluster variety glued from patches $T_{N^\circ}(\mathbb R_{>0})\times \mathbb A^{n+m}_{\mathbb R}$ which naturally is embedded in $\bar{\mathcal A}_{\Gamma_{\prin}^s}(\mathbb R)$.
Restricting $\pi$ yields
\begin{eqnarray}\label{eq:prin coef pos}
    \pi_{+}: \bar{\mathcal A}_{\Gamma_{\prin}^s}(\mathbb R_{>0}) \to \mathbb A^{n+m}_{\mathbb R}
\end{eqnarray}
with $\pi^{-1}_+(1,\dots,1)=\mathcal A_\Gamma(\mathbb R_{>0})$ and $\pi^{-1}_+(0)=T_{N^\circ}(\mathbb R_{>0})$.
In special cases the degeneration in Proposition~\ref{prop:degen Aprin to torus} can be further compactified so that a compactification of $\mathcal A_\Gamma(\mathbb R)$ degenerates to a compactification of $T_{N^\circ}(\mathbb R)$, i.e. a toric variety. 
For the general construction we refer to \cite{CMN} and \cite[\S8]{GHKK14}.
By Proposition~\ref{prop:degen Aprin to torus} and the restriction map $\pi_+$ also the totally positive parts of the compactifications of $T_{N^\circ}(\mathbb R)$ are non-empty as they all contain $T_{N^\circ}(\mathbb R_{>0})$.

\subsection{Cluster ensemble maps}\label{sec:p*}

Recall that a cluster ensemble map $p:\mathcal A_{\Gamma} \to \mathcal X_{\Gamma}$ is induced by a lattice map, called a {\bf cluster ensemble lattice map}, $p^*:N\to M^\circ$ satisfying
\begin{itemize}
  \setlength{\itemsep}{1pt}
  \setlength{\parskip}{0pt}
  \setlength{\parsep}{0pt}
    \item $\left.p^*\right|_{N_{\rm mut}}: n \mapsto (\{n,{\cdot}\} :N^\circ \to \mathbb Z)$, 
    \item for $\pi:M^\circ \to M^\circ/N_{\rm mut}^\perp$ the projection, then $\pi \circ p^*:n \mapsto \{n,{\cdot}\}|_{N_{\rm mut}^\circ}$. 
\end{itemize}
Fix a seed $s$ for $\Gamma$ and consider the associated tori $T_{M;s}\subset \mathcal X_{\Gamma}$ and $T_{N^\circ;s}\subset \mathcal A_{\Gamma}$.
Then a cluster ensemble lattice map restricted to the seed tori $p\vert_{s}:T_{N^\circ;s}\to T_{M;s}$ is a monomial map expressing the local coordinates on $\mathcal A_\Gamma$ given by the cluster variables of $s$ in terms of local coordinates of $\mathcal X_\Gamma$ (extending Fomin--Zelevinsky's notion of $\hat y$ variables). 
More precisely, given a seed $s$ a cluster ensemble lattice map $p^*:N\to  M^\circ$ is determined by a fully extended exchange matrix. 
We have identifications $k[N]=k[T_{M;s}]\equiv k[X_{1;s}^{\pm 1},\dots,X_{n+m;s}^{\pm 1}]$ and $k[M^\circ]=k[T_{N^\circ;s}]\equiv k[A_{1;s}^{\pm 1},\dots,A_{n+m;s}^{\pm 1}]$ and so the cluster ensemble lattice map induces
\[
p^*:k[N]\to k[M^\circ], \quad X_{i;s}\mapsto \prod_{j=1}^{n+m} A_{j;s}^{b_{ji}}.
\]
Although for our use we only require $p^*$ to be of full rank in general it is desirable to have an invertible cluster ensemble map, so $p^*$ should be given by a lattice isomorphism.
In this case, combining with the positive Laurent phenomenon (see Proposition~\ref{prop:LP Cor6.3} for a summary) we get a positive parametrization of the cluster variety $\mathcal A_\Gamma$ and its compactification $Y$. 
Moreover, the $\mathcal A_\Gamma$ and $\mathcal X_\Gamma$ cluster varieties are isomorphic in this case.

\begin{Example}\label{exp:A2+frozen}
Consider the cluster algebra of type $\mathtt A_2$ with one frozen variable with skew-symmetric bilinear form is given in the initial seed $s$ by the quiver $1\to 2 \leftarrow \small{\boxed{3}}$.
We choose a fully extended exchange matrix
\[
\widetilde{B_s}=\left(\begin{matrix}
0&1&0\\-1&0&-1\\0&1&1
\end{matrix}\right)
\]
Let $A_1,A_2,A_3$ and $X_1,X_2,X_3$ denote the cluster variables of $s$. Then $p^*$ maps
$X_1\mapsto A_2^{-1}, X_2\mapsto A_1A_3$ and  $X_3\mapsto A_2^{-1}A_3$.
Its inverse is given by $A_1\mapsto X_1X_2X_3^{-1},A_2\mapsto X_1^{-1}$ and $A_3\mapsto X_1^{-1}X_3$.
\end{Example}

It is not hard to verify that for any cluster ensemble map the diagram  on the left hand side below commutes for all $k\in I_{\rm mut}$
\begin{eqnarray}\label{eq:p trop diag}
    \xymatrix{
    \mathcal A_\Gamma \ar[d]_{\mu_{k;\mathcal A}} \ar[r]^{p} & \mathcal X_\Gamma \ar[d]^{\mu_{k;\mathcal X}} \\
    \mathcal A_\Gamma \ar[r]_{p}& \mathcal X_\Gamma
    }\quad \quad \quad 
    \xymatrix{
    \mathcal A_\Gamma(\mathbb Z^T) \ar[d]_{\mu_{k;\mathcal A}^{\trop}} \ar[r]^{p^{\trop}} & \mathcal X_\Gamma(\mathbb Z^T) \ar[d]^{\mu_{k;\mathcal X}^{\trop} } \\
    \mathcal A_\Gamma(\mathbb Z^T) \ar[r]_{p^{\trop}}& \mathcal X_\Gamma(\mathbb Z^T)
    }\quad \quad \quad 
    \xymatrix{
    N_s^\circ \ar[d]_{\mu_{k;\mathcal A}^{\trop}} \ar[r]^{p^{\trop}} & M_s \ar[d]^{\mu_{k;\mathcal X}^{\trop} } \\
    N_{s'}^\circ \ar[r]_{p^{\trop}}& M_{s'}
    }
\end{eqnarray}
The middle diagram in \eqref{eq:p trop diag} is obtained from the left one by passing to the semifield $\mathbb Z^T$ also known as the {\bf Fock--Goncharov tropicaliaztion} (see e.g. \cite[\S2]{GHKK14}). 
Given a seed $s$ the {\bf integer tropical points} of a cluster variety, in this case $\mathcal A_\Gamma(\mathbb Z^T)$ or $\mathcal X_\Gamma(\mathbb Z^T)$, can be identified non-canonically with the cocharacter lattice of the corresponding torus chart.
Fixing another seed $s'$ obtained from $s$ by mutation in direction $k$ we obtain the diagram on the right hand side of \eqref{eq:p trop diag} consisting of lattices and piecewise linear maps.
We proceed by describing the linear regions of these maps.
The linear regions in $N_s^\circ$ resp. $M_s$ are separated by the hyperplane $H_{p^*(e_k)}:=\{p\in N:\langle p^*(e_k),p\rangle=0\}$ resp. $H_{e_k}:=\{q\in M:\langle e_k,q\rangle=0\}$.
Fix the bases $\{e_{i;s}:i\in I\}$ and  $\{e_{i,s'}:i\in I\}$ on $N$ and the bases $\{f_{i;s}:i\in I\}$ and $\{f_{i;s'}:i\in I\}$ on $M^\circ$. 
Then the rightmost diagram in \eqref{eq:p trop diag} splits into two  diagrams of linear maps
\[
\xymatrix{
H^+_{p^*(e_k)} \ar[r]^{\widetilde{B_s}^T}\ar[d]_{\Mu_{k;\mathcal A}^+} & H^+_{e_k} \ar[d]^{\Mu_{k;\mathcal X}^+} \\
N_{s'}^\circ \ar[r]_{\widetilde{B_{s'}}^T} & M_{s'}
}
\quad \quad \text{and} \quad \quad
\xymatrix{
H^-_{p^*(e_k)} \ar[r]^{\widetilde{B_s}^T}\ar[d]_{\Mu_{k;\mathcal A}^-} & H^-_{e_k} \ar[d]^{\Mu_{k;\mathcal X}^-} \\
N_{s'}^\circ \ar[r]_{\widetilde{B_{s'}}^T} & M_{s'}
}
\]
where $H_{p^*(e_k)}^+:=\{p\in N:\langle p^*(e_k),p\rangle\ge 0\}$ etc.
Moreover, $\widetilde{B_s}$ resp. $\widetilde{B_{s'}}$ are  fully extended exchange matrices defining $p^*:M^\circ \to N$ with respect to the bases given by $s$ resp. $s'$.
Further,
\[
\Mu_{k;\mathcal A}^\pm  =
\left(\begin{smallmatrix}
1 \ 0 &  &\cdots & &0\\
0 & \ddots & & &\\
[\pm b_{1k}]_+& \cdots &-1 & \cdots & [\pm b_{n+m,k}]_+ \\
 & & & \ddots & \ 0 \\
0 & & \cdots &  & 0\ 1
\end{smallmatrix}\right) 
\text{ in } H^\pm_{p^*(e_k)},\quad \text{ resp. }  \quad
\Mu_{k;\mathcal X}^\pm  =
\left(\begin{smallmatrix}
1 & 0\ \cdots & [\mp b_{k1}]_+ & \cdots & 0 \\
0 &\ddots & \vdots & & \vdots \\
 & & -1 & & \\
\vdots & & &\ddots & \vdots \\
0 & \cdots & [\mp b_{k,n+m}]_+ & \cdots & 1
\end{smallmatrix}
\right)  \text{ in } H^\pm_{e_k}.
\]
Notice that 
$\Mu_{k;\mathcal A}^\pm=\left(\Mu_{k;\mathcal A}^\pm\right)^{-1}$ and $\Mu_{k;\mathcal X}^\pm=\left(\Mu_{k;\mathcal X}^\pm\right)^{-1}$. 
Moreover, these maps encode {\it matrix mutation}, {\it i.e.} as in \cite[Equation (2.5)]{FWZ} we have
\begin{eqnarray}\label{eq:matrix mut}
    \widetilde{B_{s'}}^T =  \Mu^\pm_{k;\mathcal X} \widetilde{B_s}^T  \Mu^\pm_{k;\mathcal A}.
\end{eqnarray}

\subsection{Valuations of g-vectors}\label{sec:gv}

Initiated in \cite[Proof of Proposition 4.3]{Labardini_et_al_CC-alg} and further elaborated by Fan Qin  \cite[Definition 3.1.1]{Qin17} we recall the dominance order on the lattice $M^\circ$.

\begin{Definition}
\label{def:dominance order}
Given a seed $s=(e_i:i\in I)$ and a cluster ensemble lattice map $p^*:N\to M^\circ$ we define the relation $\prec_{s}$ on ${M}^{\circ}$ for $m_1,m_2\in M^\circ$ by
\[
m_1 \prec_{s} m_2 \ \Leftrightarrow \ \lambda m_2= \lambda m_1 + {p}^{\ast}_1(n) \text{ for some }n\in {N}^+_{s} \text{ and }\lambda\in \mathbb Z_{>0},
\]
where $N^+_s$ is the set of lattice points in the cone spanned by the elements of $s$.
Since ${p}_1^{\ast}$ is injective we have that $({M}^{\circ}, \prec_{s})$ is a partially ordered group.
We call $\prec_{s} $ {\bf the dominance order associated to $s$}. 
A linear order on ${M}^{\circ}$ refining $\prec_{s}$ will be called a {\bf linear dominance order} and we denote it by $<_{s}$. More precisely, for any linear order $\lessdot$ on $M^\circ$ the {\bf linear refinement of $\prec_s$ by $\lessdot$ 
} is given by
\[
m_1<_s m_2 \ \Leftrightarrow \ m_1\prec_s m_2, \ \text{ or } \ m_1\not\prec_s m_2 \text{ and } m_1\lessdot m_2.
\]
\end{Definition}
Linear refinements of the dominance order exist as the dominance order is normal \cite[Theorem 1]{Fuchs50}, for more details we refer to \cite{BCMN_NO}.

\begin{Lemma}\label{lem:iso ordered}
Let $<_s$ be a linear refinement of $\prec_s$ by a linear order $\lessdot$ that satisfies $0\lessdot m$ for all $m\in p^*(N_s^+)$.
If additionally $p^*:N\to M^\circ$ is of full rank, it extends to an isomorphism of linearly ordered abelian groups $p^*:(N_{\mathbb Q},\lessdot)\to (M^\circ_{\mathbb Q},<_s)$.
\end{Lemma}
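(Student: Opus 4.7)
The plan is to first exploit the full-rank hypothesis to turn $p^*$ into a $\mathbb{Q}$-linear isomorphism, and then verify that $\lessdot$ and $<_s$ agree on $M^\circ_{\mathbb{Q}}$. Since $N$ and $M^\circ$ both have rank $n+m$ and $p^*$ is of full rank, $p^*$ is injective with image of finite index, so after tensoring with $\mathbb{Q}$ it becomes a $\mathbb{Q}$-linear isomorphism $p^*:N_{\mathbb{Q}}\to M^\circ_{\mathbb{Q}}$. With the convention that the order $\lessdot$ on $N_{\mathbb{Q}}$ is the pullback via $p^*$ of the order $\lessdot$ on $M^\circ_{\mathbb{Q}}$ used to refine $\prec_s$, the lemma reduces to showing that $\lessdot$ and $<_s$ define the same total order on $M^\circ_{\mathbb{Q}}$. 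Both orders are translation-invariant, so it suffices to check $0\lessdot m \Leftrightarrow 0<_s m$ for every $m\in M^\circ_{\mathbb{Q}}$.

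Next I would carry out the two implications. If $0<_s m$ then by the definition of the refinement either (i) $0\prec_s m$, or (ii) $0\not\prec_s m$ and $0\lessdot m$; case (ii) gives the conclusion immediately. In case (i) there exist $\lambda\in\mathbb{Z}_{>0}$ and $n\in N_s^+$ with $\lambda m = p^*(n)$, and the standing hypothesis gives $0\lessdot p^*(n)=\lambda m$; using that $\lessdot$ is a translation-invariant linear order on the torsion-free group $M^\circ_{\mathbb{Q}}$ (hence compatible with positive integer scaling), one concludes $0\lessdot m$. The converse is dual: if $0\lessdot m$ and $0\prec_s m$ one gets $0<_s m$ from clause (i) of the refinement, while if $0\lessdot m$ but $0\not\prec_s m$ then clause (ii) yields $0<_s m$ directly.

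The step I expect to be the main obstacle is the scaling in case (i). It hinges on $\lessdot$ being a genuine ordered-group structure on the rationalization $M^\circ_{\mathbb{Q}}$, not merely a linear order on the lattice $M^\circ$. This is automatic for any translation-invariant order on a torsion-free abelian group, but it is precisely where the positivity hypothesis $0\lessdot m$ for $m\in p^*(N_s^+)$ does its work: it is what lets us conclude $0\lessdot m$ from $0\lessdot \lambda m = p^*(n)$ with $n\in N_s^+$. The full-rank assumption on $p^*$ enters only once, to upgrade the injection $N\hookrightarrow M^\circ$ to an actual isomorphism after tensoring with $\mathbb{Q}$; without it one would obtain instead an order-preserving embedding onto a finite-index subgroup of $M^\circ_{\mathbb{Q}}$.
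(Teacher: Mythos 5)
Your proof is correct and follows essentially the same strategy as the paper's: reduce to the observation that the positivity hypothesis $0\lessdot m$ for $m\in p^*(N_s^+)$ forces $\prec_s$ to already be a subrelation of $\lessdot$, so the refinement $<_s$ trivializes to $\lessdot$, and then the full-rank hypothesis upgrades $p^*$ to a $\mathbb{Q}$-linear isomorphism. The only cosmetic difference is that you check the equality of orders on $M^\circ_{\mathbb{Q}}$ directly whereas the paper pulls both orders back to $N_{\mathbb{Q}}$, and you make explicit the (correct) use of positive-integer scaling in a torsion-free ordered group, which the paper leaves implicit when passing from $N_s^+$ to $(N_s^+)_{\mathbb{Q}}$.
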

\begin{proof}
By a little abuse of notation we denote the linear order on $N_{\mathbb Q}$ induced from $\lessdot$ by pullback along $p^*$ also by $\lessdot$. So $0\lessdot n$ holds for all $n\in N_s^+$.
Let $<$ denote the order on $N_{\mathbb Q}$ induced by pullback of $<_s$ along $p^*$. For $n_1,n_2\in N$ we have
\begin{eqnarray*}
n_1<n_2 \ &\Leftrightarrow& \ p^*(n_1)<_s p^*(n_2) \\
&\Leftrightarrow& \ p^*(n_1)\prec_s p^*(n_2), \text{ or } p^*(n_1)\not\prec_s p^*(n_2) \text{ and } p^*(n_1)\lessdot p^*(n_2) \\
&\Leftrightarrow& \ n_2=n_1+n \text{ for some } n\in (N_s^+)_{\mathbb Q}, \text{ or } \not\exists\ n\in (N_s^+)_{\mathbb Q}: n_2=n_1+n \text{ and } n_1\lessdot n_2
\end{eqnarray*}
However, if $n_2=n_1+n$ for some $n\in (N_s^+)_{\mathbb Q}$ we have $0\lessdot n$ and so $n_1 = n_1+0 \lessdot n_1+n = n_2$. So $n_1 <n_2$ holds if and only if $n_1\lessdot n_2$.
\end{proof}

\begin{Definition}\label{def:Langlands dual}
We define the {\bf Langlands dual data} $\Gamma^\vee$ of $\Gamma$ as
\begin{itemize}
  \setlength{\itemsep}{1pt}
  \setlength{\parskip}{0pt}
  \setlength{\parsep}{0pt}
    \item $N^\vee:=N^\circ$ with saturated subalttice $d\cdot N=:(N^\vee)^\circ$ where $d:=\text{lcm}(d_1,\dots,d_{n+m})$ is the least common multiple;
    \item the skew-symmetric bilinear form is $\{\cdot,\cdot\}^\vee:N^\vee\times N^\vee\to \mathbb Q$, where $\{\cdot,\cdot\}^\vee:=d^{-1}\{\cdot,\cdot\}$;
    \item $I^\vee:=I$ and $I^\vee_{\rm mut}:=I_{\rm mut}$;
    \item for all $i\in I^\vee$ we define $d_i^\vee:=d_i^{-1}d$;
    \item the dual lattices are $M^\vee=M^\circ$ y $(M^\vee)^\circ=d^{-1}M$.
\end{itemize}
Note that if $d_i=1$ for all $i\in I$ then $\Gamma^\vee=\Gamma$.
Given the seed data $s=(e_i:i\in I)$ for $\Gamma$ we define {\bf Langlands dual seed data} for $\Gamma^\vee$ by
$s^\vee:=(e_i^\vee:i\in I)$, where $e_i^\vee:=d_ie_i$.
\end{Definition}    

Given a cluster ensemble lattice map $p^*:N\to M^\circ$ for $\Gamma$, we obtain a natural cluster ensemble lattice map for $\Gamma^\vee$, $(p^\vee)^*:N^\vee\to (M^\vee)^\circ$ called the {\bf Langlands dual cluster ensemble lattice map}. 
In Table~\ref{tab:p maps and matrices} we summarize the lattice maps obtained from a choice of cluster ensemble map and the matrices representing them with respect to a choice of seed $s$ for $\Gamma$.

\begin{table}[]
    \centering
    \begin{tabular}{c|c|c|c}
       lattice map  & basis of domain & basis of codomain & matrix \\\hline
        $p^*:N\to M^\circ$ & $s=\{e_i:i\in I\}$ & $\{f_i:i\in I\}$ & $\widetilde{B_s}$ \\
        $p^{\trop}:N^\circ\to M$ & $\{d_ie_i:i\in I\}$ & $\{d_if_i:i\in I\}$ & $\widetilde{B_s}^T$\\
        $(p^\vee)^*:N^\vee\to (M^\vee)^\circ$ & $s^\vee=\{e_i^\vee:i\in I\}$ & $\{f_i^\vee:i\in I\}$ & $-\widetilde{B_s}^T$ \\ 
        $(p^\vee)^{\trop}:(N^\vee)^\circ\to M^\vee$ & $\{d_i^\vee e_i^\vee:i\in I\}$ & $\{d_i^\vee f_i^\vee:i\in I\}$ & $-\widetilde{B_s}$
    \end{tabular}
    \caption{Lattice maps given by a cluster ensemble map and the matrices representing them with respect to the bases induced by a choice of seed $s$.}
    \label{tab:p maps and matrices}
\end{table}

\medskip

\begin{Theorem}[\cite{GHKK14}]\label{thm:theta basis}
Assuming that the full Fock--Goncharov conjecture holds for $\mathcal A_\Gamma$, the cluster algebra $A(\Gamma)$ has a basis of {\bf $\vartheta$-functions} $\{\vartheta_q:q\in \mathcal X_{\Gamma^\vee}(\mathbb Z^T)\}$ indexed by the integer tropical points of the cluster dual $\mathcal X_{\Gamma^\vee}$. 
In particular, the basis contains all cluster variables and cluster monomials whose indices in $\mathcal X_{\Gamma^\vee}(\mathbb Z^T)$ coincide with Fomin--Zeleviskys {$\gv$-vectors}.
\end{Theorem}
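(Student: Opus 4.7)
The statement is the main consequence of the Gross--Hacking--Keel--Kontsevich construction, so my plan essentially amounts to recalling their scattering diagram machinery and assembling the pieces that are relevant here. First, I would attach to the seed $s$ (or rather to its Langlands dual $s^\vee$) the \emph{cluster scattering diagram} $\mathfrak{D}^{s^\vee}$ in $M_{\mathbb R}^\circ$: a collection of codimension-one walls, each carrying a wall-crossing automorphism, built inductively from the incoming walls $(e_k^\perp,1+z^{v_k})$ for $k\in I_{\rm mut}$ so that the composition of wall-crossings along any loop is trivial. The key input is the consistency theorem of Kontsevich--Soibelman / GHKK, which gives existence and uniqueness of $\mathfrak{D}^{s^\vee}$ up to equivalence. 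This diagram lives naturally on $\mathcal{X}_{\Gamma^\vee}(\mathbb R^T)$ and its integer points $\mathcal{X}_{\Gamma^\vee}(\mathbb Z^T)$ parametrize the would-be basis.

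Second, for each $q\in \mathcal{X}_{\Gamma^\vee}(\mathbb Z^T)$ I would define the theta function $\vartheta_q$ by summing monomials attached to broken lines with asymptotic direction $q$ and endpoint in a generic chamber. The needed inputs are: (i) broken lines are finite at each endpoint (requires positivity of the scattering diagram and some Noetherian argument on the support), (ii) the result $\vartheta_q$ is independent of the endpoint inside a chamber (a direct consequence of consistency of $\mathfrak{D}^{s^\vee}$), and (iii) the transformation of $\vartheta_q$ between adjacent chambers is governed by the corresponding wall-crossing, so $\vartheta_q$ glues to a regular function on a formal neighborhood of the union of tori defining $\mathcal{A}_\Gamma$. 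At this stage the $\vartheta_q$ are only formal Laurent series in each cluster chart.

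Third, I would invoke the \emph{full Fock--Goncharov conjecture} as the hypothesis of the theorem: it guarantees exactly that the formal theta series $\vartheta_q$ are in fact finite Laurent polynomials (so they are bona fide global regular functions on $\mathcal{A}_\Gamma$) and that they span the upper cluster algebra, which under the hypothesis coincides with $A(\Gamma)$. Linear independence is immediate from the fact that $\vartheta_q$ has a distinguished leading monomial $z^q$ in the chamber containing $q$, hence distinct $q$'s give distinct leading terms. Together this yields the claimed basis of $A(\Gamma)$ indexed by $\mathcal{X}_{\Gamma^\vee}(\mathbb Z^T)$.

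Finally, to recover cluster variables and cluster monomials inside this basis I would identify, for each seed $s'$ reachable from $s$, a cluster chamber $\sigma_{s'}\subset \mathcal{X}_{\Gamma^\vee}(\mathbb R^T)$ whose integer points are in bijection with cluster monomials in $s'$: on $\sigma_{s'}$ the scattering diagram is trivial, so the only contributing broken line is the straight one and $\vartheta_q=z^q$ in the $s'$-chart. Tracing back through the identification $\mathcal{X}_{\Gamma^\vee}(\mathbb Z^T)\equiv \mathbb Z^{n+m}$ induced by $s'$ recovers precisely the Fomin--Zelevinsky $\mathbf{g}$-vectors, so cluster monomials in $s'$ appear as $\{\vartheta_q : q\in \sigma_{s'}\cap \mathcal{X}_{\Gamma^\vee}(\mathbb Z^T)\}$, which gives the last assertion of the theorem. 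The main obstacle throughout is the consistency/finiteness of the scattering diagram and broken line counts; the full Fock--Goncharov hypothesis is exactly what sidesteps the most delicate finiteness issue in the final step.
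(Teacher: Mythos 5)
The paper does not prove this statement; it is quoted verbatim from Gross--Hacking--Keel--Kontsevich \cite{GHKK14}, and your sketch faithfully recapitulates exactly the GHKK machinery (cluster scattering diagram from consistency, theta functions via broken lines, the full Fock--Goncharov hypothesis to upgrade formal sums to genuine regular functions spanning $A(\Gamma)$, and the cluster chambers giving $\gv$-vectors). Since there is no independent proof in the paper to compare against, your proposal matches the cited source's approach, including the correct identification of where the full Fock--Goncharov hypothesis enters as the finiteness/spanning input.
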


The $\gv$-vectors associated with cluster variables in the cluster algebra $A(\Gamma)$ naturally live in $\mathcal X_{\Gamma^\vee}(\mathbb Z^T)$, which can be identified (non-canonically) with $M^\vee$ for every choice of seed $s^\vee$.
The $\gv$-vectors play a key role for us due to the following result that holds true more generally for certain middle cluster algebras.

\begin{Lemma}[\cite{BCMN_NO}, see also \cite{FO20}]\label{g_valuation}
Let $\gv_s: A(\Gamma) \setminus \{ 0\} \to (M^\vee ,<_s)$ be the function defined by $\gv_s(f):= \min{}_{<_s}\{m_1, \dots , m_t\}$,
where $f=c_1\vartheta_{m_1} + \dots + c_t\vartheta_{m_t}$ with $c_i\not=0$ for all $i=1,\dots,t$ is the expression of $f$ in the theta basis of $A(\Gamma)$. 
Then $\gv_s$ is a valuation with one dimensional leaves and the theta basis $\{ \vartheta_m\in A(\Gamma) : m\in\mathcal X_{\Gamma^\vee}(\mathbb Z^T)\}$ is adapted for $\gv_s$.
\end{Lemma}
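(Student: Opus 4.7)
The plan is to verify the valuation axioms one by one, with the crux being multiplicativity, which reduces to a structural property of theta functions established in \cite{GHKK14}. Well-definedness of $\gv_s$ is immediate from Theorem~\ref{thm:theta basis}, since every nonzero $f\in A(\Gamma)$ has a unique expansion in the theta basis, and scalar invariance $\gv_s(cf)=\gv_s(f)$ for $c\in \mathbb R\setminus\{0\}$ is equally immediate. For the ultrametric-type inequality under addition, any index appearing in the theta-basis expansion of $f+g$ must also appear in the expansion of $f$ or of $g$, so the $<_s$-minimum of the support of $f+g$ cannot lie strictly below $\min_{<_s}\{\gv_s(f),\gv_s(g)\}$; cancellation in a matching coefficient at the common minimum can only push the valuation strictly upward.

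The multiplicative property $\gv_s(fg)=\gv_s(f)+\gv_s(g)$ is the main obstacle and the step where the full Fock--Goncharov hypothesis really enters. The key input is the triangular product rule for theta functions: for all $m_1,m_2\in M^\vee$ one has
\[
\vartheta_{m_1}\vartheta_{m_2}=\vartheta_{m_1+m_2}+\sum_{m\,\succ_s\, m_1+m_2}\alpha^{m}_{m_1,m_2}\vartheta_{m},
\]
where all nontrivial corrections are strictly above $m_1+m_2$ in the dominance order $\prec_s$, and a fortiori in the linear refinement $<_s$. This atomicity property of the theta basis is a consequence of the structure constant description in \cite{GHKK14}; compare Qin's formulation via the dominance order in \cite{Qin17}. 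Writing $f=\sum c_i\vartheta_{m_i}$ and $g=\sum d_j\vartheta_{n_j}$ with $m_1=\gv_s(f)$ and $n_1=\gv_s(g)$ the $<_s$-minima of the supports, the compatibility of $<_s$ with the group law on $M^\vee$ (guaranteed by Fuchs' theorem, as invoked before Definition~\ref{def:Langlands dual}) implies that $m_1+n_1$ is the strict $<_s$-minimum of the set $\{m_i+n_j\}$. Expanding $fg$ in the theta basis via the product rule, the only contribution to the coefficient of $\vartheta_{m_1+n_1}$ comes from the $(i,j)=(1,1)$ term and equals $c_1 d_1\neq 0$. Hence $\gv_s(fg)=m_1+n_1=\gv_s(f)+\gv_s(g)$.

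For the remaining claims, observe that the filtered piece of $A(\Gamma)$ at level $\gamma\in M^\vee$ consists precisely of those $f$ whose theta-basis support lies at or above $\gamma$ in $<_s$, so it is spanned by $\{\vartheta_m:m\ge_s\gamma\}$. Intersecting with the theta basis $\mathbb B=\{\vartheta_m\}$ therefore recovers a basis of this subspace, which is the defining property of an adapted basis. The quotient of two successive filtered pieces is spanned by the single class of $\vartheta_\gamma$, giving one-dimensional leaves.
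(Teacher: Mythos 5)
The paper does not actually prove this lemma; it is imported from \cite{BCMN_NO} (with a pointer to \cite{FO20}), so there is no in-paper proof to compare against. I will therefore assess your argument on its own terms.

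Your reconstruction is correct and follows what is essentially the only natural route: well-definedness and scalar invariance from the existence of the theta basis (Theorem~\ref{thm:theta basis}); the ultrametric inequality from support containment; multiplicativity from the triangularity of the theta-function structure constants with respect to the dominance order (i.e.\ $\alpha(m_1,m_2;r)\neq 0$ forces $r=m_1+m_2+p^*(n)$ for some $n\in N_s^+$, with $\alpha(m_1,m_2;m_1+m_2)=1$); and then the adaptedness and one-dimensional-leaves statements by inspecting the filtered pieces, which are visibly coordinate subspaces in the theta basis. The strict-minimality argument using translation invariance of $<_s$ (as a linear order on the abelian group, guaranteed by Fuchs' extension theorem) is exactly the step that makes multiplicativity go through, and you have it right; it is confirmed by the paper's own use of the valuation in Lemma~\ref{lem:ideals s and s'}, where the exchange relation $A_kA_k'=\prod A_i^{[b_{ik}]_+}+\prod A_i^{-[b_{ik}]_-}$ splits as $\vartheta_{f_k}\vartheta_{\gv_s(A_k')}=\vartheta_{f_k+\gv_s(A_k')}+\vartheta_{f_k+\gv_s(A_k')+p^*(e_k)}$, matching your product rule with $\alpha=1$.

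Two small points worth making explicit if this were written up as a full proof. First, the triangular product rule is cited rather than derived; it is a genuine theorem (it follows from the broken-line description of theta-function products in \cite{GHKK14} together with the positivity/atomicity of the scattering diagram, and is stated in Qin's dominance-order language in \cite{Qin17}), and one should record the precise reference since it is the crux. Second, there is a sign-convention wrinkle, already present in the paper: Definition of valuation in \S\ref{sec:val} states $\nu(a+b)\le\max\{\nu(a),\nu(b)\}$, but the filtration $\mathcal F_{\nu,\gamma}=\{a:\nu(a)\ge\gamma\}$ (and everything downstream) is consistent only with the opposite convention $\nu(a+b)\ge\min\{\nu(a),\nu(b)\}$. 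Your proof correctly uses the min convention, which is the one actually in force throughout the paper; you might note this explicitly so the reader is not confused when comparing to the stated valuation axioms.
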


More precisely, when $A(\Gamma)$ is the homogeneous coordinate ring of $Y$ (rather then the regular functions on $\mathcal A_\Gamma$) then $\mathcal X_\Gamma(\mathbb Z^T)$ contains a {\it positive set} indexing those theta functions on $\mathcal A_\Gamma$ that lie in $A(\Gamma)$, see \cite[\S8.5]{GHKK14} and \cite{CMN}. 
After identifying $\mathcal X_{\Gamma^\vee}(\mathbb Z^T)$ with $M^\vee$ the positive set consists of the integral points in a polyhedral cone.
For more details on how to compute the cone we refer to forthcoming work \cite{BCMN_NO}.

\medskip
Assume we have a cluster ensemble lattice map $p^*:M^\circ\to N$ that is of full rank. 
Then the Langlands dual map and the tropicalizations of $p$ and $p^\vee$ are also full rank and a $\gv$-vector in $M^\vee$ has a unique preimage in $(N^\vee)^\circ_{\mathbb Q}$.
When we change from one initial seed to another the g-vectors change according to the tropical $\mathcal X$-mutation and by the above their preimages under $(p^\vee)^{\trop}$ change according to the tropical $\mathcal A$-mutation.
Computationally, the tropical $\mathcal A$-mutation is simpler than the the tropical $\mathcal X$-mutation, as in coordinates only a single entry changes.

\begin{Proposition}[Laurent phenomenon and Corollary~6.3 in \cite{FZ07}]\label{prop:LP Cor6.3}
The cluster algebras $A(\Gamma)$ and $A(\Gamma_{\prin}^s)$ are of the same cluster type for all seeds $s$ of $A(\Gamma)$. 
Moreover, any cluster variable $A$ of $A(\Gamma)$ is a Laurent polynomial in the cluster variables $\{\tilde A_{1;s},\dots,\tilde A_{n+m;s}\}$ with positive integer coefficients for every seed $s$.
The Laurent polynomial for $\tilde A$ can be obtained from the Laurent polynomial for $\tilde A\in A(\Gamma_{\prin}^s)$ by evaluating all principal coefficient variables to one.
\end{Proposition}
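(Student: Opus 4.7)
The plan is to prove the three assertions separately, each reducing to a well-documented statement in the cluster literature.

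First, I would establish the equality of cluster types. By the definition of $\Gamma_{\prin}$, the mutable index set $\widetilde{I}_{\rm uf}$ coincides with $I_{\rm mut}$, the mutable sublattice $\widetilde{N}_{\rm uf} = N_{\rm mut} \oplus \{0\}$ is naturally identified with $N_{\rm mut}$, and the restriction of the new skew-symmetric form to $\widetilde{N}_{\rm uf}$ agrees with $\{\cdot,\cdot\}$ on $N_{\rm mut}$. Hence the $n \times n$ principal block $\epsilon_{{\rm mut};s}$ of the exchange matrix is the same for the initial seeds $s$ and $\tilde s$. Since matrix mutation \eqref{eq:matrix mut} in a direction $k \in I_{\rm mut}$ acts on the principal block by a rule that involves only the principal block itself, the mutation class of $\epsilon_{{\rm mut};s}$, and hence the cluster type, is preserved.

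Second, for the positive Laurent phenomenon I would invoke the standing Fock--Goncharov hypothesis: by Theorem~\ref{thm:theta basis} the algebra $A(\Gamma_{\prin}^s)$ admits a theta basis containing every cluster variable, and by the GHKK construction every theta function expands with nonnegative integer coefficients as a Laurent polynomial in the cluster variables of any seed. Specializing to cluster variables yields the desired positive Laurent expansion of every $\tilde A \in A(\Gamma_{\prin}^s)$ in the variables $\{\tilde A_{1;s},\dots,\tilde A_{n+m;s}\}$.

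Third, the specialization statement follows from Fomin--Zelevinsky's separation formula: every cluster variable $\tilde A \in A(\Gamma_{\prin}^s)$ can be written as $\prod_j \tilde A_{j;s}^{g_j}\cdot F(\hat y_1,\dots,\hat y_n)$, where $F$ is the $F$-polynomial (with positive integer coefficients) and the $\hat y_i = \prod_j \tilde A_{j;s}^{b_{ji}}\tilde X_i$ involve the principal coefficient variables $\tilde X_i$. Substituting $\tilde X_i = 1$ and $\tilde A_{j;s} = A_{j;s}$ in this formula transforms the Laurent polynomial for $\tilde A$ into the Laurent polynomial for the corresponding cluster variable $A \in A(\Gamma)$ in the seed $s$.

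The main obstacle is translating faithfully between Fomin--Zelevinsky's combinatorial formalism (in which $F$-polynomials and $\hat y$-variables were originally defined without frozen directions) and the geometric setup of this paper with frozen directions and the additional freedom to choose the $\ast$-block of the fully extended exchange matrix \eqref{eq:Mp*}. Because the $\ast$-block plays no role in the principal part of matrix mutation in any direction $k \in I_{\rm mut}$, the cluster type statement is independent of this choice; the separation formula extends verbatim once the $\hat y$-variables are redefined using the $\widetilde{B_s}$-entries indexed by all of $I$, and the specialization at $\tilde X_i = 1$ remains valid.
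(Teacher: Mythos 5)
The paper provides no proof for this Proposition; it is stated as a citation of the Laurent phenomenon and Corollary~6.3 of Fomin--Zelevinsky~\cite{FZ07} (with positivity supplied by~\cite{GHKK14}). Your reconstruction correctly identifies all three ingredients: that the principal $n\times n$ block of the exchange matrix is preserved when passing to $\Gamma_{\prin}$ and that matrix mutation in mutable directions only depends on this block (hence identical cluster type), the positive Laurent phenomenon, and the separation formula giving the specialization at $\tilde X_i = 1$. One small remark: invoking the theta basis (and hence the full Fock--Goncharov conjecture for $\mathcal A_{\Gamma_{\prin}^s}$) for part two is more machinery than needed --- GHKK's positivity of Laurent expansions follows directly from scattering-diagram consistency and does not require the theta basis statement --- but the conclusion is of course the same. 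You were also right to flag the frozen-directions-versus-coefficients translation issue, which is exactly the distinction the paper highlights in its opening footnote; the separation formula carries over once the $\hat y$-variables are formed from the rows of $\widetilde{B_s}$ indexed by all of $I$ rather than just $I_{\rm mut}$, as you note.
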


\subsection{Proof of the main result}\label{sec:main}
Before proving the main result we explain it in an example continuing Example~\ref{exp:A2+frozen}.

\begin{Example}
As the cluster algebra $A(\Gamma)$ defined by $1\to 2 \leftarrow \small{\boxed{3}}$ is of finite type we may choose the set of all cluster variables as a Khovanskii basis $\mathcal B=\{A_1,A_2,A_3,A_4,A_5,A_6\}$.
Here $A_4=\mu_1^*(A_1), A_5=\mu_2^*\mu_1^*(A_2)$ and $A_6=\mu_1^*\mu_2^*\mu_1^*(A_1)$.
We compute the $\gv$-vectors of all cluster variables with respect to the seeds $s$ and $s'=\mu_1(s)$ which are the columns of the matrices
\begin{align*}
G_{s;\mathcal B}= 
\left(\begin{smallmatrix} 
1&0&0&-1&-1&0\\0&1&0&1&0&-1\\0&0&1&0&0&0
\end{smallmatrix}\right)\quad  \text{and} \quad
G_{s';\mathcal B}=
\left(\begin{smallmatrix} 
-1&0&0&1&1&0\\0&1&0&0&-1&-1\\0&0&1&0&0&0
\end{smallmatrix}\right)
\end{align*}
Applying the tropicalized Langlands dual cluster ensemble map reduces to multiplying  $G_{s;\mathcal B}$ respectively $G_{s';\mathcal B}$ by 
\[
-\widetilde{B_s}^{-T}=\left(\begin{smallmatrix} -1&-1&1\\1&0&0\\1&0&-1 \end{smallmatrix}\right) \quad \text{respectively} \quad -\widetilde{B_{s'}}^{-T}=\left(\begin{smallmatrix} -1&1&-1\\-1&0&0\\-1&0&-1 \end{smallmatrix}\right).
\]
Then by Theorem~\ref{thm:main} the rows of the following matrices span adjacent maximal prime cones in $\trop^+(J_{\mathcal B})$:
\[
\tau_s:\left(\begin{smallmatrix} -1&-1&1&0&1&1\\1&0&0&-1&-1&0\\1&0&-1&-1&-1&-1&0 \end{smallmatrix}\right)
\quad \text{and} \quad
\tau_{s'}:\left(\begin{smallmatrix} 1&1&-1&-1&0&-1\\1&0&0&-1&-1&0\\1&0&-1&-1&-1&-1&0
\end{smallmatrix}\right).
\]
To verify the statement we first compute $J_{\mathcal B}$: in this case it is the ideal generated by the exchange relations (in general saturation is necessary)
\[
J_{\mathcal B}=(A_1A_4-1-A_2,A_2A_5-A_3-A_4,A_6A_4-A_3-A_5,A_5A_1-A_6-1,A_6A_2-A_3A_1-1).
\]
The initial ideals corresponding to $\tau_s$ and $\tau_{s'}$ are
\begin{eqnarray*}
\init_{\tau_s}(J_{\mathcal B}) &=& (A_4A_6 - A_5, A_2A_6 - 1, A_1A_5 - 1, A_2A_5 - A_4, A_1A_4 - A_2),\quad \text{and}\\
\init_{\tau_{s'}}(J_{\mathcal B}) &=& (A_2A_6 - 1, A_1A_4 - 1, A_1A_5 - A_6, A_2A_5 - A_4, A_4A_6 - A_5).
\end{eqnarray*}
\end{Example}

The proof Theorem~\ref{thm:main} relies on a Lemma about initial forms of exchange relations with respect to weighting matrices defined by $\gv$-vectors.
For our cluster algebra $A(\Gamma)$ we consider two seeds $s$ and $s'$ that are related by mutation in direction $k$.
Let $\mathcal B=\{A_1,\dots,A_N\}$ be a Khovanskii basis for $\gv_s$ and $\gv_{s'}$ containing all cluster variables of $s=(A_1,\dots,A_{n+m})$ and $s'=s-\{A_k\}\cup \{A_{k}'\}$.
The exchange relation between $s$ and $s'$ is of form
\[
A_kA_k'=\prod_{i\in I}A_i^{[b_{ik}]_+} + \prod_{i\in I}A_i^{-[b_{ik}]_-}
\]
which corresponds to an element $f=x_kx_k'-\prod_{i\in I}x_i^{[b_{ik}]_+} - \prod_{i\in I}x_i^{-[b_{ik}]_-}\in J_{\mathcal B}$.
Here $\tilde B_s=(b_{ij})_{i\in[n+m],j\in [n]}$.
Fix a linear dominance order on $M^\circ$ and consider the matrices $G_{s;\mathcal B}$ and $G_{s';\mathcal B}$ whose columns are $\gv$-vectors of elements in $\mathcal B$ with respect to $\gv_s$ and $\gv_{s'}$.
We consider $G_{s;\mathcal B}$ and $G_{s';\mathcal B}$ as weighting matrices for $k[x_1,\dots,x_N]$ with respect to a linear refinement of the dominance order that we fix once and for all.

\begin{Lemma}\label{lem:ideals s and s'}
We have $\init_{G_{s;\mathcal B}}\left(f\right) = x_kx_k'- \prod_{i\in I}x_i^{-[b_{ik}]_-}$ and $ \init_{G_{s';\mathcal B}}\left(f\right) = x_kx_k'- \prod_{i\in I}x_i^{[b_{ik}]_+}$.
\end{Lemma}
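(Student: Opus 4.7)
The idea is to compute the $G_{s;\mathcal B}$-weights of the three monomials in $f$ and to identify which ones achieve the extremal weight with respect to the linear dominance order $<_s$. The weight of a monomial $x^a$ under $G_{s;\mathcal B}$ equals $\gv_s(\prod_i A_i^{a_i})$, so by multiplicativity of $\gv_s$ the only nontrivial weight to determine is that of $x_k x_k'$: the two ``pure'' monomials are cluster monomials in $s$ and hence their weights are just the exponent vectors $\sum_i [b_{ik}]_+ f_{i;s}$ and $\sum_i [-b_{ik}]_+ f_{i;s}$.

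To compute $\gv_s(A_k')$ I would apply $\gv_s$ to the exchange relation $A_k A_k' = \prod_i A_i^{[b_{ik}]_+} + \prod_i A_i^{-[b_{ik}]_-}$. By multiplicativity on the left and the min-formula of Lemma~\ref{g_valuation} on the right (the two summands are cluster monomials, hence single theta functions whose indices differ by the nonzero vector $\sum_i b_{ik} f_{i;s}$),
\[
\gv_s(A_k) + \gv_s(A_k') = \min{}_{<_s}\Big\{\sum_i [b_{ik}]_+ f_{i;s},\ \sum_i [-b_{ik}]_+ f_{i;s}\Big\}.
\]
The difference of the two candidates equals $\sum_i b_{ik} f_{i;s} = p^*(e_{k;s})$, which lies in $p^*(N_s^+)$ because $e_{k;s}\in N_s^+$. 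By Definition~\ref{def:dominance order} this forces $\sum_i [-b_{ik}]_+ f_{i;s} \prec_s \sum_i [b_{ik}]_+ f_{i;s}$, and hence the same strict inequality in $<_s$. So the minimum is $\sum_i [-b_{ik}]_+ f_{i;s}$, and together with $\gv_s(A_k) = f_{k;s}$ this gives $\gv_s(A_k') = \sum_i [-b_{ik}]_+ f_{i;s} - f_{k;s}$. The weights of the three monomials are therefore $\sum_i [-b_{ik}]_+ f_{i;s}$, $\sum_i [b_{ik}]_+ f_{i;s}$, and $\sum_i [-b_{ik}]_+ f_{i;s}$; the first and third coincide and are strictly $<_s$-smaller than the second, so the initial form consists of exactly these two monomials, proving the first formula.

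For the second formula I would rerun the same argument at the seed $s'$. The crucial observation is that the $k$-th column of $\tilde B$ negates under mutation (see \eqref{eq:matrix mut}), so $p^*(e_{k;s'}) = -\sum_i b^{s}_{ik} f_{i;s'}$. Consequently the relative order in $<_{s'}$ of the two candidate exponent vectors flips compared to $<_s$: now $\sum_i [b_{ik}]_+ f_{i;s'} \prec_{s'} \sum_i [-b_{ik}]_+ f_{i;s'}$. Running the analogous computation of $\gv_{s'}(A_k)$ then shows that $x_k x_k'$ and $\prod_i x_i^{[b_{ik}]_+}$ share the smaller weight $\sum_i [b_{ik}]_+ f_{i;s'}$, which yields $\init_{G_{s';\mathcal B}}(f) = x_k x_k' - \prod_i x_i^{[b_{ik}]_+}$.

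The main technical point is the bookkeeping around sign conventions---the min convention for $\gv_s$ on the theta basis, the direction of $\prec_s$ encoded by $p^*(N_s^+)$, and the sign flip of the $k$-th column of $\tilde B$ under mutation---which is precisely what makes the two seeds $s$ and $s'$ produce complementary initial forms (respectively the ``negative'' and the ``positive'' monomial of the exchange relation) from one and the same element $f$.
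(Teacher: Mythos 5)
Your proof is correct and arrives at exactly the same three weights and the same conclusion as the paper's, but you take a genuinely different route to the key input $\gv_s(A_k')$. The paper simply invokes the $\gv$-vector mutation rule of \cite[Proposition~6.6]{FZ07} to write down $\gv_s(A_k') = -f_k - \sum_i [b_{ik}]_- f_i$ directly, then reads off the weights of all three monomials of $f$ and observes that two of them coincide. You instead \emph{derive} $\gv_s(A_k')$ from the exchange relation itself: applying the valuation $\gv_s$ to both sides, using multiplicativity on the left and the min-formula of Lemma~\ref{g_valuation} on the right (both summands being cluster monomials, hence single theta functions), and then using the fact that the difference of the two candidate exponents equals $p^*(e_{k;s}) \in p^*(N_s^+)$ so that $\prec_s$ decides the minimum. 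In effect you re-prove the relevant special case of the FZ mutation formula from the theta-basis axioms and the definition of the dominance order. The paper's argument is shorter; yours is more self-contained and makes visible \emph{why} the dominance order $\prec_s$ is the correct refinement to fix here --- it is exactly what forces the initial form of the exchange relation to be a binomial rather than a trinomial. For the $s'$ statement both proofs invoke the same mechanism (the $k$-th column of $\tilde B$ negates under matrix mutation, flipping the relative order of the two exponents), the paper just compresses this into one sentence.
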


\begin{proof}
The $\gv$-vectors with respect to $s$ are $\gv_s(A_i)=f_i$. 
By \cite[Proposition 6.6]{FZ07} we have
\begin{eqnarray*}
    \gv_s(A_k')=-f_k+\sum_{i\in I}[b_{ik}]_+f_i-\sum_{i\in I}b_{ik}f_i 
    = -f_k-\sum_{i\in I}[b_{ik}]_-f_i.
\end{eqnarray*}
For the monomials in the exchange relation we have
\[
\gv_s\left(\prod_{i\in I}A_i^{[b_{ik}]_+}\right) = \sum_{i\in I} [b_{ik}]_+f_i, \quad \gv_s\left(\prod_{i\in I}A_i^{-[b_{ik}]_-}\right) = -\sum_{i\in I} [b_{ik}]_-f_i
\]
which  implies the claim for $\init_{G_{s;\mathcal B}}(f)$. 
Reversing the roles of $s$ and $s'$ changes the sign of the matrix entries $b_{ik}$ which yields the claim for $\init_{G_{s';\mathcal B}}(f)$.
\end{proof}

\begin{Proposition}\label{cor:max cones}
Let $s,s'$ and $J_{\mathcal B}$ be as above.
Fix a full rank fully extended exchange matrix $\widetilde{B_s}$ for $s$ and let $\widetilde{B_{s'}}$ be the matrix obtained from by matrix mutation \eqref{eq:matrix mut}.
Then the columns of $-\widetilde{B_s}^{-T}G_{s;\mathcal B}$  \text{and} $-\widetilde{B_{s'}}^{-T}G_{s';\mathcal B}$ span two adjacent $(n+m)$-dimensional cones in $\trop(J_{\mathcal B})\subset\mathbb R^{N}$, denote them by $\tau_s$ and $\tau_{s'}$ that satisfy
\[
\init_{\tau_s}(J_{\mathcal B})=\init_{G_{s;\mathcal B}}(J_{\mathcal B}) \quad \text{and} \quad \init_{\tau_{s'}}(J_{\mathcal B})=\init_{G_{s';\mathcal B}}(J_{\mathcal B}).
\]
Moreover, $\tau_s$ and $\tau_{s'}$ are two adjacent maximal prime cones in $\trop(J_{\mathcal B})$.
\end{Proposition}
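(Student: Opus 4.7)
The plan is to apply Theorem~\ref{thm:val and trop} to each of $\gv_s$ and $\gv_{s'}$ to produce the two cones as maximal prime cones in $\trop(J_{\mathcal B})$, then to identify their rays with the rows of $\tilde G_s := -\widetilde{B_s}^{-T} G_{s;\mathcal B}$ and $\tilde G_{s'} := -\widetilde{B_{s'}}^{-T} G_{s';\mathcal B}$ via the Langlands dual cluster ensemble map, and finally to deduce adjacency from the matrix mutation identity~\eqref{eq:matrix mut}.

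\emph{Step 1 (existence of the cones).} By Lemma~\ref{g_valuation}, $\gv_s$ and $\gv_{s'}$ are full rank valuations with one dimensional leaves, and by hypothesis $\mathcal B$ is a Khovanskii basis for each. Theorem~\ref{thm:val and trop} then furnishes maximal prime cones $\tau_s,\tau_{s'} \in \trop(J_{\mathcal B})$ satisfying
\[
\init_{\tau_s}(J_{\mathcal B}) = \init_{G_{s;\mathcal B}}(J_{\mathcal B}), \qquad \init_{\tau_{s'}}(J_{\mathcal B}) = \init_{G_{s';\mathcal B}}(J_{\mathcal B}),
\]
with respect to the linear dominance orders $<_s$ and $<_{s'}$. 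Primality of each cone is recorded by the isomorphism $\init_{\tau_s}(J_{\mathcal B}) \cong k[S(A(\Gamma),\gv_s)]$ with a semigroup algebra, and maximality is forced by the rank equality $n+m = \dim_{\mathrm{Krull}}A(\Gamma)$.

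\emph{Step 2 (identification of rays).} The full rank hypothesis on $\widetilde{B_s}$ makes $-\widetilde{B_s}^{-T}$ invertible, and by Table~\ref{tab:p maps and matrices} it represents $((p^\vee)^*)^{-1}:(M^\vee)^\circ_{\mathbb Q}\to N^\vee_{\mathbb Q}$ in the bases induced by $s$ and $s^\vee$. Applying Lemma~\ref{lem:iso ordered} to the Langlands dual cluster ensemble map, this inverse transports $<_s$ to a linear order $\lessdot$ on $N^\vee_{\mathbb Q}$ in which the positive cone $N_s^+$ is positive. Consequently $\tilde G_s$, viewed as a weighting matrix on $k[x_1,\dots,x_N]$ equipped with $\lessdot$, produces the same initial ideal as $G_{s;\mathcal B}$ did with $<_s$, and \cite[Lemma 3]{B-quasival} lets us realise a weight $w_s\in\tau_s^\circ$ as a positive combination $w_s = \sum_{i=1}^{n+m}\lambda_i (\tilde G_s)_i$ of the rows of $\tilde G_s$ with $\lambda_1 \gg \cdots \gg \lambda_{n+m} > 0$. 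Varying the $\lambda_i$ sweeps out the cone generated by these rows; combined with the lineality space of Lemma~\ref{lem:grading and lineality} this cone attains full dimension $n+m$ and therefore coincides with $\tau_s$. The symmetric argument identifies $\tau_{s'}$ with the cone generated by the rows of $\tilde G_{s'}$.

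\emph{Step 3 (adjacency).} Combining the piecewise linear action of the tropicalized $\mathcal X$-mutation on the columns of $G_{s;\mathcal B}$ with the involutivity of $\Mu^\pm_{k;\mathcal X}$ and the matrix mutation identity~\eqref{eq:matrix mut}, a direct calculation yields $\tilde G_{s'} = \Mu^\pm_{k;\mathcal A}\, \tilde G_s$ column by column, the sign being determined by which chamber of the tropicalized $\mathcal A$-mutation contains the column. Since $\Mu^\pm_{k;\mathcal A}$ differs from the identity only in its $k$-th row, $\tilde G_s$ and $\tilde G_{s'}$ coincide on every row except row $k$, so their $n+m-1$ common rows together with the lineality span a common facet of $\tau_s$ and $\tau_{s'}$. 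Finally Lemma~\ref{lem:ideals s and s'} shows that the initial forms of the exchange relation with respect to $G_{s;\mathcal B}$ and $G_{s';\mathcal B}$ differ by one term, so $\init_{\tau_s}(J_{\mathcal B}) \neq \init_{\tau_{s'}}(J_{\mathcal B})$ and hence the cones are distinct and adjacent. The principal obstacle is the order-theoretic compatibility in Step~2: Lemma~\ref{lem:iso ordered} together with the full rank of $\widetilde{B_s}$ is essential to guarantee that $\tilde G_s$ functions as a legitimate weighting matrix representing the valuation $\gv_s$.
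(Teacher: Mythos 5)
Your proposal is correct and takes essentially the same route as the paper's proof. The paper assembles the argument from the same ingredients: primality of $\init_{G_{s;\mathcal B}}(J_{\mathcal B})$ via \cite[Theorem 1]{B-quasival} (which you invoke through its packaged form, Theorem~\ref{thm:val and trop}), the transport of the dominance order across the Langlands dual cluster ensemble map via Lemma~\ref{lem:iso ordered}, the reduction of a weighting matrix to a lexicographically perturbed weight vector in the interior of a maximal prime cone (the paper uses \cite[Lemma 8.8]{KM16} where you use \cite[Lemma 3]{B-quasival}; these play the same role), the observation that the tropical $\mathcal A$-mutation touches only the $k$-th row so the two matrices share $n+m-1$ rows, and Lemma~\ref{lem:ideals s and s'} to separate the two cones. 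The only genuine difference is organizational: the paper verifies the polyhedral statement first and the primality second, while you build the cones from the valuation theorem and then match the rays; neither order changes the substance of the argument.
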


\begin{proof}
We first prove the polyhedral statement:
the set of cluster variables $\{A_1,\dots,A_N\}$ contains all $A_{i;s}$ and $A_{i;s'}$, so both matrices $G_{s;\mathcal B}$ and $G_{s',\mathcal B}$ contain an $(n+m)$-square identity matrix and are of full rank.
As $\widetilde{B_s}$ is of full rank (and the rank is an invariant under matrix mutation) we deduce that the columns $u_1,\dots,u_{n+m}$ of $-\widetilde{B_s}^{-T}G_{s;\mathcal B}$ and $u'_1,\dots,u'_{n+m}$ of $-\widetilde{B_{s'}}^{-T}G_{s';\mathcal B}$ span two $(n+m)$-dimensional cones in $\mathbb R^N$.
Notice that $-\widetilde{B_s}^{-T}$ is the matrix defining $((p^\vee)^{\trop})^{-1}$ with respect to $s$ (see Table~\ref{tab:p maps and matrices}) so by the right most diagram in \eqref{eq:p trop diag} the matrices
$-\widetilde{B_s}^{-T}G_{s;\mathcal B}$ and $-\widetilde{B_s'}^{-T}G_{s';\mathcal B}$
are related by the tropical $\mathcal A$-mutation. Hence, they differ in only one row, namely the one corresponding to the mutation direction $k$. So, $u_i=u_i'$ unless $i=k$ which implies that the cones share a facet.

For the second part of the statement let $<_{s^\vee}$ be the linear refinement of $\prec_{s^\vee}$ by some linear order $\lessdot$ on $(M^\vee)^\circ\supset M^\vee$.
Notice that $(p^\vee)^{\trop}$ induces an isomorphism between the rational vector spaces $d\cdot N_{\mathbb Q}\to M^\circ_{\mathbb Q}$.
Hence, by Lemma~\ref{lem:iso ordered} $\lessdot$ induces a linear order on $d\cdot N_{\mathbb Q}$, which without loss of generality we may assume to be the standard lexicographic order.
So by \cite[Lemma 8.8]{KM16} we have
\[
\init_{\left((p^\vee)^{\trop}\right)^{-1}(G_{s;\mathcal B}),\lessdot}(J_{\mathcal B}) = \init_{u_{n+m}}(\cdots (\init_{u_1}(J_{\mathcal B}))\cdots) = \init_{u_{1}+\epsilon_2u_2+\dots+\epsilon_{n+m}u_{n+m}}(J_{\mathcal B}), 
\]
where $\epsilon_2,\dots,\epsilon_{n+m}>0$ are chosen carefully according to \cite[Proposition 1.13]{Stu96}.
Moreover, by Lemma~\ref{lem:iso ordered}
\[
\init_{G_{s;\mathcal B},{<_{s^\vee}}}(J_{\mathcal B})= \init_{\left((p^\vee)^{\trop}\right)^{-1}(G_{s;\mathcal B}),\lessdot}(J_{\mathcal B}).
\]
Now by \cite[Theorem 1]{B-quasival} the initial ideal $\init_{G_{s;\mathcal B},{<_{s^\vee}}}(J_{\mathcal B})$ is prime as $\{A_1,\dots,A_N\}$ is a Khovanskii basis for $\gv_{s}$. 
In particular, $u_1,\dots,u_{n+m}\in \trop(J_{\mathcal B})$ and $u_{1}+\epsilon_2u_2+\dots+\epsilon_{n+m}u_{n+m}$ is contained in a maximal prime cone $\tau_s$ of $\trop(J_{\mathcal B})$. 
Notice that $\tau_s$ intersects the linear span $\langle u_1,\dots,u_{n+m}\rangle$ non-trivially as $\init_{\tau_s}(J)=\init_{u_{1}+\epsilon_2u_2+\dots+\epsilon_{n+m}u_{n+m}}(J)$.

Repeating the argument for $s'$ we find a maximal prime cone $\tau_{s'}$ intersecting the linear span of $\langle u_1,\dots,\hat u_k,u_k',\dots,u_{n+m}\rangle$.
Moreover, by Lemma~\ref{lem:ideals s and s'} $\tau_s$ and $\tau_{s'}$ intersect in a common facet.
In particular, $u_k$ generates a ray that lies in $\tau_s \setminus (\tau_s\cap \tau_{s'})$. 
\end{proof}

\begin{proof}[Proof of Theorem~\ref{thm:main}]
Let $s'=\mu_k(s)$ for some $k\in I_{\rm mut}$. 
Given Proposition~\ref{cor:max cones} the only claim left to prove is that the initial ideals $\init_{\tau_s}(J_{\mathcal B})$ and $\init_{\tau_{s'}}(J_{\mathcal B})$ are totally positive. 
To see this we consider the degeneration of the cluster variety given by the positive part of the real $\mathcal A$-variety with principal coefficients \eqref{eq:prin coef pos}. 
First, note that the ring of regular functions on the $\mathcal A$-variety is the \emph{upper cluster algebra}
\[
\up(\mathcal A_\Gamma):=H^0(\mathcal A_\Gamma,\mathcal O_{\mathcal A_\Gamma})=\bigcap_{s\text{ seed}} k[A_{i;s}^{\pm 1}:i\in I]
\]
By the Laurent phenomenon, we have $A(\Gamma)\subset \up(\mathcal A_\Gamma)$. In particular, $\spec(A(\Gamma))$ is a partial compactification of $\mathcal A_{\Gamma}$ and we have $\proj(A(\Gamma))$ is a compactification of $\mathcal A_\Gamma$. 
We further compactify $\bar{\mathcal A}_{\Gamma^{\prin};s}$ by letting the frozen variables vanish \cite[Construction B.9]{GHKK14}, denote this by $\overline{\mathcal A}_{\Gamma^{\prin;s}}^{\fr}$.
In this case \cite[Theorem 8.30]{GHKK14} applies and gives a toric degeneration of $\proj(A(\Gamma))$ (by principal coefficients) whose central fibre is a projective toric variety defined by the Newton--Okounkov polytope of the valuation $\gv_s$.
In particular, the central fiber of this degeneration is the normalization of the central fiber of the Gröbner degeneration associated with $G_{s;\mathcal B}$ by Theorem~\ref{thm:val and trop}. 

As the central fibre of the principal coefficient degeneration contains $\mathbb R_{>0}^{n+m}$ (Proposition~\ref{prop:degen Aprin to torus}), so does the central fibre of the Gröbner degeneration $\proj(k[x_1,\dots,x_N]/\init_{\tau_s}(J_{\mathcal B}))$.
The coordinates of the totally positive torus $\mathbb R_{>0}^{n+m}$ correspond to the variables of the seed $s$.
Consider the lifts of the cluster variables $A_1,\dots,A_N\in A(\Gamma)$ to $\tilde A_1,\dots,\tilde A_N\in A(\Gamma_{\prin}^s)$. 
They are Laurent polynomials with positive coefficients in the variables $\tilde A_{1;s},\dots,\tilde A_{n+m;s}$ that depend on the deformation parameters $X_{1;s},\dots,X_{n+m;s}$.
In the central fibre of $\mathcal A_{\Gamma_{\prin}^s}(\mathbb R)$ the coordinates $\tilde A_1,\dots,\tilde A_N$ are Laurent monomials in $\tilde A_{1;s},\dots,\tilde A_{n+m;s}$.
In particular, if $\tilde A_{1;s},\dots,\tilde A_{n+m;s}$ take positive real values, then so do  $\tilde A_1,\dots,\tilde A_N$.
Hence, $V(\init_{\tau_s}(J_{\mathcal B}))\cap \mathbb R^{n+m}_{\ge 0}\not =\varnothing$ and so by Proposition~\ref{prop:tot pos and vanishing} the ideal $\init_{\tau_s}(J_{\mathcal B})$ is totally positive.
\end{proof}

\begin{Corollary}
With the assumptions as in Theorem~\ref{thm:main}, the columns of $-\widetilde{B_s}^{-T}G_{s;\mathcal B}$ corresponding to frozen directions lie in the lineality space $\mathcal L_{J_{\mathcal B}}$.
\end{Corollary}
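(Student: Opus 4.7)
The plan is to realize the frozen rows of $-\widetilde{B_s}^{-T} G_{s;\mathcal B}$ as weight vectors induced by gradings of the cluster algebra $A(\Gamma)$, so that they automatically lie in $\mathcal L_{J_{\mathcal B}}$.

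\emph{Step 1 (gradings yield lineality vectors).} $A(\Gamma) \cong k[x_1,\ldots,x_N]/J_{\mathcal B}$ carries a natural $\mathbb Z^m$-multigrading: a row vector $D\in\mathbb R^{n+m}$ defines a grading $\deg(A_{i;s}) := D_i$ making every exchange relation homogeneous if and only if $D\tilde B_s = 0$. Since every cluster variable is a theta function $\vartheta_{\gv_s(A)}$, induction on mutation sequences (using $D\tilde B_s = 0$) gives $\deg(A_j) = D\cdot\gv_s(A_j)$ for all $A_j\in\mathcal B$. Consequently the weight vector $w := D\cdot G_{s;\mathcal B}\in\mathbb R^N$ makes every generator of $J_{\mathcal B}$ $w$-homogeneous, so $\init_w(J_{\mathcal B}) = J_{\mathcal B}$ and $w\in\mathcal L_{J_{\mathcal B}}$ (this is the natural extension of Lemma~\ref{lem:grading and lineality} from standard-basis gradings to arbitrary $\mathbb Z$-gradings).

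\emph{Step 2 (frozen rows are gradings).} I would show that each of the $m$ frozen rows $D^{(k)}$ of $-\widetilde{B_s}^{-T}$ (for $k\in I\setminus I_{\rm mut}$) satisfies $D^{(k)}\tilde B_s = 0$, equivalently that the frozen-mutable block of $\widetilde{B_s}^{-T}\widetilde{B_s}$ vanishes. The key is to factor $\widetilde{B_s} = \Delta\cdot M$ with $\Delta = \mathrm{diag}(d_1,\ldots,d_{n+m})$, obtaining
\[
M = \begin{pmatrix} -\gamma & -\alpha \\ \alpha^T & D_f^{-1}W \end{pmatrix},
\]
where $\gamma$ is the skew-symmetric restriction of $\{\cdot,\cdot\}$ to $I_{\rm mut}\times I_{\rm mut}$, $\alpha$ is the mutable-frozen block, $D_f = \mathrm{diag}(d_i:i\in I\setminus I_{\rm mut})$, and $W$ is the arbitrary frozen-frozen block of $\widetilde{B_s}$. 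A direct computation shows that the first $n$ columns of $M+M^T$ vanish, i.e.\ $M^TP = -MP$ for $P = \bigl(\begin{smallmatrix}I_n\\0\end{smallmatrix}\bigr)$. Left-multiplying by $M^{-T}$ gives $M^{-T}MP = -P$, and since $\widetilde{B_s}^{-T}\widetilde{B_s} = M^{-T}M$ (as $\Delta$ is diagonal), the mutable columns of $\widetilde{B_s}^{-T}\widetilde{B_s}$ are exactly $-P$; in particular the frozen-mutable block vanishes.

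Combining Steps 1 and 2 yields the corollary: each frozen row $D^{(k)}$ of $-\widetilde{B_s}^{-T}$ is a grading vector for $A(\Gamma)$, so the corresponding row $D^{(k)}G_{s;\mathcal B}$ of $-\widetilde{B_s}^{-T}G_{s;\mathcal B}$ lies in $\mathcal L_{J_{\mathcal B}}$. Since $-\widetilde{B_s}^{-T}$ is invertible, the $m$ frozen rows are linearly independent and hence span the full $m$-dimensional space of gradings arising from the cluster multigrading. The main technical obstacle is the identity $(M+M^T)P = 0$, but this is immediate from the skew-symmetry of $\gamma$ together with the observation that the arbitrary block $W$ contributes only to the frozen-frozen entries of $M+M^T$, which do not appear in the mutable columns.
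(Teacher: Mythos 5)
Your Step 2 has a genuine gap in the identity that carries all the weight. After factoring $\widetilde{B_s}=\Delta M$ with $\Delta=\mathrm{diag}(d_1,\dots,d_{n+m})$, you correctly establish $M^TP=-MP$ (hence $M^{-T}MP=-P$), but then assert ``$\widetilde{B_s}^{-T}\widetilde{B_s}=M^{-T}M$ (as $\Delta$ is diagonal).'' That is false unless $\Delta$ is a scalar matrix. Computing carefully, $\widetilde{B_s}^{-T}=(\,(\Delta M)^T)^{-1}=(M^T\Delta)^{-1}=\Delta^{-1}M^{-T}$, so
\[
\widetilde{B_s}^{-T}\widetilde{B_s}=\Delta^{-1}M^{-T}\Delta M\,,
\]
and $\Delta$ need not commute with $M^{-T}$. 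Diagonality alone does not give you commutativity; you would need all the $d_i$ to be equal. In particular the stronger intermediate claim that ``the mutable columns of $\widetilde{B_s}^{-T}\widetilde{B_s}$ are exactly $-P$'' is simply wrong when the $d_i$ are not all equal: already in a rank-two example with $d=(1,2)$ one finds $\widetilde{B_s}^{-T}\widetilde{B_s}P$ has entries $-d_i^{\pm 1}$, not $-1$, on the diagonal. Worse, the weaker conclusion you actually need (the frozen--mutable block of $\widetilde{B_s}^{-T}\tilde B_s$ vanishes, i.e.\ $D^{(k)}\tilde B_s=0$ for $k$ frozen) also fails for generic skew-symmetrizable data: taking $n=m=2$, $d=(1,2,1,2)$, $\{e_1,e_2\}=\{e_1,e_3\}=\{e_2,e_4\}=1$, $\{e_1,e_4\}=\{e_2,e_3\}=\{e_3,e_4\}=0$, and the frozen--frozen block of $\widetilde{B_s}$ equal to zero gives a full-rank fully extended exchange matrix for which the frozen rows of $\widetilde{B_s}^{-T}\tilde B_s$ are $(0,1)$ and $(1/2,0)$, not zero. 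So your argument is valid precisely in the skew-symmetric situation $\Delta=I$ (which covers the running Example~\ref{exp:A2+frozen} in the paper), but does not establish the claim in the stated generality.

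It is also worth noting that your route differs from the paper's. Your Step 1 reduces the corollary to an algebraic condition on weight vectors ($D\tilde B_s=0$) and then tries to verify it directly for the frozen rows of $-\widetilde{B_s}^{-T}$; the paper instead invokes the torus action $T_{N^\circ/N^\circ_{\mathrm{mut}}}\curvearrowright\bar{\mathcal A}_\Gamma$ together with the principal-coefficients degeneration and then cites \cite[Lemma 2.10]{BMN} to transport the resulting $\mathbb Z^m$-grading into the lineality space. The paper's geometric argument sidesteps the explicit computation of $\widetilde{B_s}^{-T}\tilde B_s$; your computational argument is more transparent but, as shown above, the key matrix identity does not survive the passage from skew-symmetric to merely skew-symmetrizable exchange data. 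Before you could make the linear-algebra approach rigorous in general you would have to either restrict to the skew-symmetric case or find a substitute for the identity $\widetilde{B_s}^{-T}\widetilde{B_s}=M^{-T}M$ that genuinely holds in the presence of a nontrivial $\Delta$.
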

\begin{proof}
Recall that by our assumptions $Y$ contains the (partially compactified) cluster variety $\bar{\mathcal A}_\Gamma=\bigcup_{s\text{ seed}} T_{N_{\rm uf;s}^\circ}\times \mathbb A^m$, where the coordinates of the affine space are given by the frozen cluster variables.
In particular, the torus $T_{N^\circ/N_{\rm mut}^\circ}\subset \mathbb A^m$ acts on $\bar{\mathcal A}_\Gamma$ and the action extends to $Y$.
The torus action on the central fibre of the degeneration by principal coefficients extends this action and $\gv$-vectors are the torus weights.
Consider $J_{\mathcal B}$ with $X=V(J_{\mathcal B})\subset\mathbb P^{N-1}$ and let $x_1,\dots,x_N$ be the coordinates of $\mathbb P^{N-1}$.
Then $J_{\mathcal B}$ homogeneous with respect to a $\mathbb Z^m$-grading defined by $\deg(x_i)=(\gv_s(A_i)_j)_{j\in I-I_{\rm mut}}$ (only those entries of the $\gv$-vector corresponding to frozen directions). 
Then $J_{\mathcal B}$ is also homogeneous with respect to the grading given by the preimages under $-\widetilde{B_s}^T$.
The rest follows by \cite[Lemma 2.10]{BMN}.
\end{proof}

\begin{Remark}\label{rmk:p not full rank}
The full rank assumption on the fully extended exchange matrix may be loosened. In some examples a cluster ensemble lattice map admitting a kernel encodes interesting information about torus actions on $\mathcal A$ related to the Picard group of $Y$.
For example, in case of the Grassmannian $\Gr(k,\mathbb C^n)$ the cluster variety $\mathcal A_\Gamma$ is contained in the affine cone $C(\Gr(k,\mathbb C^n))$ rather than $\Gr(k,\mathbb C^n)$ itself.
Considering a quotient by a one dimensional torus $T$ we obtain a cluster variety $\mathcal A_\Gamma/T\subset \Gr(k,\mathbb C^n)$.
If a cluster ensemble lattice map associated to $p:\mathcal A_\Gamma\to \mathcal X_\Gamma$ admits a kernel $K$ that satisfies $T_{K^*}=T$, then $p$ descents to an isomorphism $\bar p:\mathcal A_{\Gamma}/T \to \mathcal X_{\Gamma/T}\subset \mathcal X_\Gamma$.
The cluster varieties $\mathcal A_{\Gamma}/T$ and $\mathcal X_{\Gamma/T}$ can also be obtained from fixed data $\Gamma/T$ obtained from $\Gamma$ by deleting a frozen direction.
Theorem~\ref{thm:main} can be extended to this case.
\end{Remark}

\section{Application I: Grassmannians and plabic graphs}\label{sec:Grass}

We keep this section rather brief and refer to the rich literature that already exists on the subject such as \cite{Akihiro_comb-mut-plabic,B-birat,B-quasival}.
\medskip

In \cite{RW17} Rietsch and Williams construct Newton--Okounkov polytopes for Grassmannians from their cluster structure.
The Plücker coordinates of the Grassmannian are cluster variables \cite{Sco06}.
If a given seed $s$ consists only of Plücker coordinates one can associated a {\it plabic graphs} $\mathcal G_s$ to it \cite{MarshScott}.
The valuation determining the Newton--Okounkov polytope of a plabic graph seed can be computed using the combinatorics of \emph{flows} on the plabic graph \cite[Definition 8.1]{RW17}.
We denote this {\it flow valuation} by $\val_{s}:A_{k,n}\setminus\{0\}\to \mathbb Z^{k(n-k)}$, where $A_{k,n}=A(\Gamma)$ is the homogeneous coordinate ring of $\Gr(k,n)$ with respect to its Plücker embedding and the cluster algebra of interest.
The lattice $\mathbb Z^{k(n-k)}$ can be identified with $M^\vee$ in the Langlands dual fixed data $\Gamma^\vee$.
By \cite{SW_cyclic_sieving} the full Fock--Goncharov conjecture is satisfied in this case and more precisely, the conditions of Theorem~\ref{thm:theta basis} (see also \cite{BCMN_Grass}).

From the combinatorics of plabic graphs it follows directly that when two plabic graph seeds $s$ and $s'$ are related by mutation in direction $k$ then only the $k^{\text{th}}$ entry of $\val_s(p_J)$ and $\val_{s'}(p_J)$ differ for every Plücker coordinate $p_J$.
The previous statement holds more generally for arbitrary seeds.
In the context of \cite{BFFHL} Theorem~\ref{thm:Gr plabic trop} was a conjecture posed during discussions among Xin Fang, Ghislain Fourier,  Milena Hering, Kiumars Kaveh,  Martina Lanini, Christopher Manon and myself.
We are now prepared to prove it.

\begin{proof}[Proof of Theorem~\ref{thm:Gr plabic trop}]
Let $s$ be a seed satisfying the assumption that the Newton--Okounkov polytope of $\val_s$ is integral, so the Plücker coordinates form a Khovanskii basis. 
It has been shown in \cite{BCMN_Grass} that for every $J\in \binom{n}{k}$ the flow valuation $\val_s(p_J)$ and the $\gv$-vector valuation $\gv_s(p_J)$ are related by a fully extended exchange matrix $\widetilde{B_s}$:
\[
-\widetilde{B_s}^T(\val_s(p_J))=\gv_s(p_J)-f_{(n-k)\times k},
\]
where $f_{(n-k)\times k}\in M^\circ$. 
Although $\widetilde{B_s}$ is not of full rank, we can extend Theorem~\ref{thm:main} to this case using Remark~\ref{rmk:p not full rank}.
Then it applies to all pairs of seeds related by mutation whose associated Newton--Okounkov polytopes are integral. 
Hence, the rows of $M_s$ are rays in the (positive part of) tropical Grassmannian $\trop^+(J_{k,n})$.
\end{proof}

\section{Application II: Flag varieties and FFLV degenerations}\label{sec:flag}

We consider $\Flag_n:=\left\{0\in V_1\subset\dots\subset V_{n-1}\subset \mathbb R^n: \dim V_k=k\right\}$, the projective variety of full flags of vector subspaces of $\mathbb R^n$.
We embed $\Flag_n$ into the product of Grassmannians and
combine with the Pl\"ucker embeddings for every $k$. 
So $\Flag_n\hookrightarrow \mathbb P^{\binom{n}{1}-1}\times \dots \times \mathbb P^{\binom{n}{n-1}-1}$ is the vanishing set of an ideal in the 
multigraded polynomial ring $S:=\mathbb R[p_J: J\subset [n]]$ with grading given by $\deg(p_J):=\omega_{\vert J\vert}\in \mathbb Z^{n-1}$ (where $\{\omega_\ell: 1\le \ell\le n-1\}$ denotes to standard basis).
The variables $p_J$ are called Pl\"ucker variables and
the (multi-)homogeneous ideal $J_n$ defining $\Flag_n$ is generated by Pl\"ucker relations (see e.g. \cite[\S1.2]{Feigin_degFlag} for their precise form).

\begin{Lemma}\thlabel{lem:sign in 3-term plückers}
Consider $J\subset[n]$ of cardinality less or equal to $n-4$.
Choose $i,j,k\not \in J$ and $1\le i<j<k\le n$. 
Then the following is a {\bf three term Pl\"ucker relation}
\begin{eqnarray}\label{eq:3 term rel}
    R_{i,j,k}^J:= p_{\{i\}\cup J}p_{\{j,k\}\cup J} - p_{\{j\}\cup J}p_{\{i,k\}\cup J} + p_{\{k\}\cup J}p_{\{i,j\}\cup J}\in I_n.
\end{eqnarray}
\end{Lemma}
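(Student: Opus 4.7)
The plan is to verify the polynomial identity $R_{i,j,k}^J = 0$ on a dense open subset of $\Flag_n$ and then conclude $R_{i,j,k}^J \in J_n$ by irreducibility. Specifically, I would parametrize a Zariski open subset of $\Flag_n$ by full rank matrices $M \in \mathrm{GL}_n(\mathbb{R})$ with the first $\ell$ rows spanning $V_\ell$; under the Plücker embeddings, each $p_S$ becomes the minor $\det M[1,\dots,|S|;\,S]$ formed by the first $|S|$ rows and columns indexed by $S$.

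In the base case $|J|=0$, the identity $p_i p_{jk} - p_j p_{ik} + p_k p_{ij} = 0$ follows by a direct expansion: writing $p_\ell = m_{1\ell}$ and $p_{ab} = m_{1a}m_{2b} - m_{1b}m_{2a}$, the sum equals the cofactor expansion along the first row of the $3\times 3$ determinant
\[
\det\begin{pmatrix} m_{1i} & m_{1j} & m_{1k} \\ m_{1i} & m_{1j} & m_{1k} \\ m_{2i} & m_{2j} & m_{2k} \end{pmatrix},
\]
which vanishes because the first two rows coincide; the ordering $i<j<k$ produces exactly the sign pattern $(+,-,+)$ via the cofactor signs $(-1)^{1+1},(-1)^{1+2},(-1)^{1+3}$.

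For general $J$, set $a := |J|+1$ and consider the $(a+1)\times(a+2)$ submatrix of $M$ on rows $1,\dots,a+1$ and columns $J\cup\{i,j,k\}$ (with $J$-columns ordered first). I would expand each of the three $(a+1)\times(a+1)$ minors $p_{\{j,k\}\cup J}$, $p_{\{i,k\}\cup J}$, $p_{\{i,j\}\cup J}$ along its last row; upon reassembly, $R_{i,j,k}^J$ identifies with the Laplace expansion, along a pair of equal rows, of the $(a+2)\times(a+2)$ matrix obtained by bordering the above submatrix with a duplicate of the $(a+1)$-st row. This determinant vanishes identically, giving the claim. The main obstacle will be sign bookkeeping: one must verify that the Laplace signs contributed by the distinguished columns $\{i,j,k\}$ collapse to the pattern $(+,-,+)$ of \eqref{eq:3 term rel}, while the mixed contributions involving $J$-columns cancel in pairs by antisymmetry of determinants. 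Once the signs are tracked, the statement reduces cleanly to the base case's determinantal identity, applied uniformly across the common $J$-columns by multilinearity.
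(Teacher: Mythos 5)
The paper states this lemma without proof, treating it as a standard fact about Plücker incidence relations; there is no argument in the text to compare against. Evaluating your proposal on its own merits: the overall strategy (reduce to a determinantal identity in the matrix entries of a parametrization and then invoke that the Plücker ideal is the full vanishing ideal of the irreducible variety $\Flag_n$) is legitimate, and your base case $|J|=0$ is correct and cleanly argued.

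The general case, however, contains two genuine gaps. First, the Laplace expansion of the $(a+2)\times(a+2)$ matrix you describe, taken along the duplicated pair of rows, is identically zero term-by-term, because every $2\times 2$ minor supported on two equal rows vanishes; it is a trivial identity $0=0$ and does not reproduce $R_{i,j,k}^J$. Second, and more seriously, the ``mixed contributions involving $J$-columns'' do \emph{not} cancel in pairs by antisymmetry. When you expand each degree-$(a+1)$ Plücker coordinate along its last row and collect the coefficient of $m_{a+1,c}$ for $c\in J$, you obtain
\[
\pm\Bigl(p_{\{i\}\cup J}\,p_{(J\setminus\{c\})\cup\{j,k\}} - p_{\{j\}\cup J}\,p_{(J\setminus\{c\})\cup\{i,k\}} + p_{\{k\}\cup J}\,p_{(J\setminus\{c\})\cup\{i,j\}}\Bigr),
\]
a three-term \emph{Grassmannian} Plücker relation in degree $a$. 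This is not zero by pairwise antisymmetric cancellation; it vanishes because it is itself a Plücker relation on $\Gr(a,n)$, which must be established separately (by induction, or by citing the Grassmannian case). One concrete check: for $J=\{2\}$, $(i,j,k)=(1,3,4)$, the $c=2$ coefficient is exactly $p_{12}p_{34}-p_{13}p_{24}+p_{14}p_{23}$.

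A correct determinantal argument replaces the duplicated row by a different bordering. Let $A:=M[1,\dots,a{+}1;\,J\cup\{i,j,k\}]$ and $B:=M[1,\dots,a;\,J\cup\{i,j,k\}]$, and let $r\in\mathbb R^{a+2}$ have entry $0$ in every $J$-column and entry $\pm p_{\{c\}\cup J}$ (with the appropriate Laplace sign) in column $c\in\{i,j,k\}$. Then the $(a+2)\times(a+2)$ matrix $N:=\bigl(\begin{smallmatrix}A\\ r\end{smallmatrix}\bigr)$ satisfies $\det N = \pm R_{i,j,k}^J$ (expand along the last row). To see $\det N=0$, note that $r$ lies in the row space of $B$: for any $v\in\ker B$ one has $\langle r,v\rangle=\det\bigl(B[J]\ \big|\ B[i]v_i+B[j]v_j+B[k]v_k\bigr)=0$, since the last column equals $-\sum_{c\in J}v_cB[c]$, a combination of the preceding columns. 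With $B$ generically of full rank $a$, this shows $r\in(\ker B)^\perp=\operatorname{rowspace}(B)\subset\operatorname{rowspace}(A)$, so $N$ is singular. This single argument handles both the $\{i,j,k\}$ and $J$ contributions uniformly and avoids the incorrect appeal to antisymmetry.
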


\begin{Lemma}
The Plücker ideal $J_n$ is invariant under the action of the symmetric group on $\mathbb R[p_J:j\subset[n]]$ given for $\sigma \in S_n$ and $J=\{j_1,\dots,j_k\}\subset [n]$ by
\[
\sigma(p_{\{j_1,\dots,j_k\}})=(-1)^{\text{sgn}(J,\sigma)}p_{\{\sigma(j_1),\dots,\sigma(j_k)\}},
\]
where $\text{sgn}(J,\sigma)$ is the number of transpositions necessary to order $\sigma(j_1),\dots,\sigma(j_k)$ increasingly.
\end{Lemma}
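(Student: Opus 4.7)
The plan is geometric: I would realize the ring automorphism described in the statement as the pullback to the homogeneous coordinate ring of the natural permutation action of $S_n$ on $\Flag_n$, and then invoke the fact that $J_n$ is the defining ideal of $\Flag_n$.

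First I would introduce the linear action of $S_n$ on $\mathbb R^n$ by $\sigma\cdot e_i := e_{\sigma(i)}$ on the standard basis. Extending to subspaces this yields an action on $\Flag_n$ via $\sigma\cdot(V_1\subset \dots\subset V_{n-1}) = (\sigma V_1 \subset \dots \subset \sigma V_{n-1})$. Since $\dim(\sigma V_k)=\dim V_k$ and the chain condition is preserved, $\Flag_n$ is stable set-theoretically under this action, hence so is $\Flag_n \hookrightarrow \prod_k \mathbb P^{\binom{n}{k}-1}$ under the product of Plücker embeddings.

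Next, I would compute the corresponding pullback on Plücker coordinates. For $V_k=\mathrm{span}(v_1,\dots,v_k)$ the Plücker coordinate $p_J$ with $J=\{j_1<\dots<j_k\}$ is the coefficient of $e_{j_1}\wedge\cdots\wedge e_{j_k}$ in $v_1\wedge\cdots\wedge v_k$. Expanding
\[
(\sigma v_1)\wedge\cdots\wedge(\sigma v_k) = \sum_{l_1,\dots,l_k} a_{l_1 1}\cdots a_{l_k k}\, e_{\sigma(l_1)}\wedge\cdots\wedge e_{\sigma(l_k)}
\]
(where $v_i=\sum_l a_{li}e_l$), the coefficient of $e_{j_1}\wedge\cdots\wedge e_{j_k}$ collects those tuples with $\{l_1,\dots,l_k\}=\sigma^{-1}(J)$, and reordering the wedge $e_{\sigma(j_1)}\wedge\cdots\wedge e_{\sigma(j_k)}$ into increasing index order produces exactly the sign $(-1)^{\mathrm{sgn}(J,\sigma)}$. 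A short bookkeeping then identifies the pullback action $\sigma\mapsto \sigma^*$ with the formula in the statement.

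Finally, since $\sigma$ preserves $\Flag_n$ and acts on its homogeneous coordinate ring by the formula of the lemma, the vanishing ideal $J_n=I(\Flag_n)$ is stable. The main technical point is the sign bookkeeping in the middle step: one must verify that the sign arising from reordering $e_{\sigma(j_1)}\wedge\cdots\wedge e_{\sigma(j_k)}$ into increasing order matches the combinatorial quantity $\mathrm{sgn}(J,\sigma)$ exactly. A purely algebraic alternative would extend $p_{(i_1,\dots,i_k)}$ antisymmetrically to arbitrary sequences (zero on repetitions) and verify directly that the three-term Plücker generators \eqref{eq:3 term rel} (together with the higher Plücker relations when $k\ge n-3$ fails) are carried to elements of $J_n$; but this would reduce to the same sign analysis carried out locally on each relation.
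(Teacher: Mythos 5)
The paper states this lemma without proof, so there is no ``paper approach'' to compare against; the fact is evidently being treated as standard. Your geometric argument is a correct and natural way to establish it: the linear $S_n$-action on $\mathbb R^n$ preserves the flag variety inside the product of projective spaces, the induced ring automorphism on $\mathbb R[p_J]$ is (a sign-twisted) permutation of Plücker variables, and therefore the vanishing ideal $J_n=I(\Flag_n)$ is stable.

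One small point worth tightening. What your computation actually produces is the pullback
\[
\sigma^*(p_J)=(-1)^{\operatorname{sgn}(\sigma^{-1}(J),\,\sigma)}\,p_{\sigma^{-1}(J)},
\]
i.e.\ the wedge you should be re-ordering is $e_{\sigma(l_1)}\wedge\cdots\wedge e_{\sigma(l_k)}$ with $\{l_1<\cdots<l_k\}=\sigma^{-1}(J)$, not $e_{\sigma(j_1)}\wedge\cdots\wedge e_{\sigma(j_k)}$ as written. This is harmless: since the permutation $i\mapsto\sigma(l_i)$ of $J$ and the permutation $i\mapsto\sigma^{-1}(j_i)$ of $\sigma^{-1}(J)$ are mutually inverse, they have the same parity, so $\operatorname{sgn}(\sigma^{-1}(J),\sigma)=\operatorname{sgn}(J,\sigma^{-1})$, and $\sigma^*$ coincides with the ring map of the lemma applied to $\sigma^{-1}$. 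Invariance of $J_n$ under all such $\sigma^{-1}$ is equivalent to invariance under all $\sigma$, so the conclusion stands. If you prefer the algebraic route you sketch at the end, note that the three-term relations \eqref{eq:3 term rel} alone do not generate $J_n$ for the flag variety; one also needs the incidence (two-step) Plücker relations, but the same antisymmetric-extension sign check applies to each of those.
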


\begin{Definition}
Let $J_n\subset \mathbb R[p_J:j\subset [n]]$ be the Plücker ideal of the flag variety. The {\bf tropical totally positive flag variety} is $\trop^+(\Flag_n):=\trop^+(J_n)$.
\end{Definition}

\begin{Example}
Consider $\Flag_3$ defined by $J_3=(p_{1}p_{23}-p_2p_{13}+p_3p_{12})\subset\mathbb R[p_1,p_2,p_3,p_{12},p_{13},p_{23}]$.
Its tropicalization is a one dimensional fan in $\mathbb R^6/\mathcal L_{J_3}$ whose maximal cones are generated by the cosets of $w_1=(0,1,1,0,0), w_2=(1,0,1,0,0,0), w_3=(1,1,0,0,0,0)$. The corresponding initial ideals are
$\init_{w_1}(J_3)=(-p_{2}p_{13}+p_{3}p_{12})$,  $\init_{w_2}(J_3)=(p_{1}p_{23}+p_{3}p_{12})$, and $\init_{w_3}(J_3)=(p_{1}p_{23}-p_{2}p_{13})$.
Notice that $\init_{w_2}(J_3)$ is not totally positive, hence $\trop^+(J_3)$ consists of only two one dimensional cones in $\mathbb R^6/\mathcal L_{J_3}$ corresponding to $w_1$ and $w_3$.
\end{Example}

\subsection{The tropical totally positive flag variety for $n=4$}
In this section we consider the tropical flag variety $\trop(\Flag_4)$ with two fan structures: one induced by the Gröbner fan of the Plücker ideal $J_4$, the other induced by the Gröbner fan of an extended ideal generated by the exchange relations of the corresponding cluster algebra, see \S\ref{sec:extended F4}.
A more extensive discussion about $\trop^+(\Flag_4)$ and its different fan structures can be found here \cite{BEW_flag-positroids}

In \cite{BLMM} the authors computed the tropicalization of the Plücker ideals $J_4$ and $J_5$.

\begin{Theorem}[\cite{BLMM}]
The tropical flag variety $\trop(\Flag_4)$ with its fan structure induced by the Gröbner fan of the Plücker ideal $J_4$ is a six dimensional fan $\trop(J_4)\subset \mathbb R^{14}/\mathcal L_{J_4}$  with 78 maximal cones. 
The initial ideals of 72 maximal cones are binomial and prime and split into four orbits with respect to the action of the symmetric group and $\mathbb Z_2$.
\end{Theorem}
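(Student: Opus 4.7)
The plan is to treat this as a concrete (but large) computation: the statement records the output of running tropicalization software on an explicit ideal together with a symmetry analysis afterward. First I would write down the generators of $J_4$ explicitly, namely the three-term Plücker relations $R^J_{i,j,k}$ from \thref{lem:sign in 3-text plückers} (the only non-trivial ones for $n=4$ are those with $|J|\in\{0,1,2\}$), giving a small finite set of quadratic generators in the $14$ Plücker variables. The grading by $(\deg p_J)=\omega_{|J|}\in\mathbb Z^{3}$ is positive, so $J_4$ is multi-homogeneous and $\trop(J_4)\subset\mathbb R^{14}/\mathcal L_{J_4}$ is a pure fan, whose expected dimension equals the Krull dimension of $\mathbb R[p_J]/J_4$ minus the rank of the lineality. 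A dimension check from $\dim \Flag_4=6$ plus the multi-homogeneous grading yields the claimed ambient dimension.

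Next I would feed the generators of $J_4$ to a tropical Gröbner fan computation (\texttt{gfan\_tropicaltraverse} or its \texttt{Macaulay2} wrapper) starting from a known maximal cone; the symmetry option reduces the traversal considerably. This produces the list of maximal cones of $\trop(J_4)$, their ray generators, and their adjacency. I would then verify the count of $78$ maximal cones of dimension $6$ and check that the fan is connected through codimension one (which follows automatically from the traversal, provided $J_4$ is prime, which it is).

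The third step is to test, for each maximal cone $\tau$, whether $\init_\tau(J_4)$ is binomial and prime. Binomiality is a direct inspection of the output Gröbner basis. Primality can be certified by exhibiting a parametrization of $V(\init_\tau(J_4))$ as a subtorus of the ambient torus, or automatically via standard procedures (computing the radical and checking equality, or using \texttt{binomials.m2} to decompose binomial ideals). This is the step I expect to require the most care: checking primality for $78$ toric candidates is delicate, and in fact one anticipates that exactly $6$ maximal cones will fail to produce a prime initial ideal (consistent with the $78-72=6$ count).

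Finally I would implement the action of $S_4$ by permuting indices with the sign convention of the Lemma, together with the $\mathbb Z_2$ duality $p_J\mapsto \pm p_{[n]\setminus J}$ coming from the isomorphism $\Flag_4\cong \Flag_4$ that sends a flag to its dual flag. Both actions preserve $J_4$, hence act on $\trop(J_4)$ and on its set of maximal cones. I would then compute orbits on the $72$ prime cones using the ray-level action, and check that there are exactly four orbits; representative cones from each orbit, together with their initial ideals, would be recorded to make the statement verifiable and to set up the comparison with the FFLV maximal cone performed later in the paper.
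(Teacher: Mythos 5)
Your proposal accurately describes the computational approach used in the cited reference \cite{BLMM}, from which this theorem is quoted in the present paper rather than proved anew (the paper follows the theorem with ``We complement their findings\ldots'' and adds ray descriptions, but does not re-derive the $78/72/\text{four-orbit}$ count). The steps you outline — explicitly listing the quadratic Plücker generators, traversing the Gröbner fan with \texttt{gfan} using the $S_4\times\mathbb Z_2$ symmetry, certifying binomial primality of initial ideals for each maximal cone, and computing the orbit decomposition of the 72 prime cones — are exactly the method of {\it loc.\ cit.}
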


We complement their findings by the following observations regarding the totally positive part of the tropical flag variety.
Let 
$\{e_1,e_2,e_3,e_4,e_{12},e_{13},e_{14},e_{23},e_{24},e_{34},e_{123},e_{124},e_{134},e_{234}\}$ 
denote the standard basis for $\mathbb R^{14}$.
The 20 rays of $\trop(J_4)/\mathcal L_{J_4}$ are generated by the standard basis elements and
\[\begin{split}
r_{12}:=e_{12}+e_{123}+e_{124},\quad r_{13}:=e_{13}+e_{123}+e_{134},\quad r_{14}:=e_{14}+e_{124}+e_{134},\\ r_{23}:=e_{23}+e_{123}+e_{234},\quad r_{24}:=e_{24}+e_{124}+e_{234},\quad r_{34}:=e_{34}+e_{134}+e_{234}.    
\end{split}
\]
$\trop^+(J_4)$ has nine rays $e_1, e_{12}, e_{123}, e_4, e_{34}, e_{234}, r_{12},r_{23}, r_{34}$ and 14 maximal cones that form a polyhedral fan dual to the associahedron, see Figure~\ref{fig:trop+F4} on the left (here, for example, the top vertex corresponds to the maximal cone whose rays are $e_1,e_{34},e_{123}$). 
\begin{figure}
    \centering
 \begin{tikzpicture}[scale=.7]
\draw(0,6) -- (2.5,4.5) -- (1.5,2) -- (-1.5,2) -- (-2.5,4.5) -- (0,6);
\node at (-.5,5) {$e_{34}$};
\draw(0,0) -- (2.5,.5) -- (1.5,2) -- (-1.5,2) -- (-2.5,.5) -- (0,0);
\node at (.5,1) {$e_{12}$};
\draw[dashed,fill,blue,opacity=.1] (0,6) -- (2.5,4.5) -- (3.5,2.5) -- (2,3) -- (0,4.5) -- (0,6);
\node[blue] at (1.5,4) {$e_{123}$};
\draw[dashed] (0,6) -- (2.5,4.5) -- (3.5,2.5) -- (2,3) -- (0,4.5) -- (0,6);
\draw[dashed,fill,red,opacity=.1] (0,6) -- (-2.5,4.5) -- (-3.5,2.5) -- (-2,3) -- (0,4.5) -- (0,6);
\draw[dashed] (0,6) -- (-2.5,4.5) -- (-3.5,2.5) -- (-2,3) -- (0,4.5) -- (0,6);
\node[red] at (-1.5,4) {$e_{1}$};
\draw[dashed,fill,green,opacity=.1] (0,0) -- (2.5,.5) -- (3.5,2.5) -- (2,3) -- (0,1.5) -- (0,0);
\draw[dashed] (0,0) -- (2.5,.5) -- (3.5,2.5) -- (2,3) -- (0,1.5) -- (0,0);
\node[green] at (1.5,1.5) {$e_{4}$};
\draw[dashed,fill,yellow,opacity=.2] (0,0) -- (-2.5,.5) -- (-3.5,2.5) -- (-2,3) -- (0,1.5) -- (0,0);
\draw[dashed] (0,0) -- (-2.5,.5) -- (-3.5,2.5) -- (-2,3) -- (0,1.5) -- (0,0);
\node[brown] at (-1.25,1.5) {$e_{234}$};
\draw (-2.5,.5) -- (-1.5,2) -- (-2.5,4.5) -- (-3.5,2.5) -- (-2.5,.5);
\node at (-2.5,2) {$r_{34}$};
\draw (2.5,.5) -- (1.5,2) -- (2.5,4.5) -- (3.5,2.5) -- (2.5,.5);
\node at (2.5,2) {$r_{12}$};
\draw[dashed,fill,gray,opacity=.1] (0,4.5) -- (2,3) -- (0,1.5) -- (-2,3) -- (0,4.5);
\node[gray] at (0,3) {$r_{23}$};
\draw[dashed] (0,4.5) -- (2,3) -- (0,1.5) -- (-2,3) -- (0,4.5);

\draw[fill,white] (1.5,2) circle (.1cm);
\draw[fill,white] (-1.5,2) circle (.1cm);
\draw (1.5,2) circle (.1cm);
\draw (-1.5,2) circle (.1cm);

\begin{scope}[xshift=10cm]
\draw(0,6) -- (2.5,4.5) -- (0,3) -- (-2.5,4.5) -- (0,6);
\node at (-.5,5) {$e_{34}$};
\draw(0,0) -- (2.5,.5) -- (0,1.245) -- (-2.5,.5) -- (0,0);
\node at (.5,.5) {$e_{12}$};
\draw[dashed,fill,blue,opacity=.1] (0,6) -- (2.5,4.5) -- (3.5,2.5) -- (2,3) -- (0,4.5) -- (0,6);
\node[blue] at (1,4.5) {$e_{123}$};
\draw[dashed] (0,6) -- (2.5,4.5) -- (3.5,2.5) -- (2,3) -- (0,4.5) -- (0,6);
\draw[dashed,fill,red,opacity=.1] (0,6) -- (-2.5,4.5) -- (-3.5,2.5) -- (-2,3) -- (0,4.5) -- (0,6);
\draw[dashed] (0,6) -- (-2.5,4.5) -- (-3.5,2.5) -- (-2,3) -- (0,4.5) -- (0,6);
\node[red] at (-2,3.5) {$e_{1}$};
\draw[dashed,fill,green,opacity=.1] (0,0) -- (2.5,.5) -- (3.5,2.5) -- (2,3) -- (0,1.5) -- (0,0);
\draw[dashed] (0,0) -- (2.5,.5) -- (3.5,2.5) -- (2,3) -- (0,1.5) -- (0,0);
\node[green] at (1.5,1.5) {$e_{4}$};
\draw[dashed,fill,yellow,opacity=.2] (0,0) -- (-2.5,.5) -- (-3.5,2.5) -- (-2,3) -- (0,1.5) -- (0,0);
\draw[dashed] (0,0) -- (-2.5,.5) -- (-3.5,2.5) -- (-2,3) -- (0,1.5) -- (0,0);
\node[brown] at (-1.5,1.5) {$e_{234}$};
\draw (-2.5,.5) -- (0,1.245) -- (2.5,.5) -- (3.5,2.5) -- (2.5,4.5)-- (0,3) -- (-2.5,4.5) -- (-3.5,2.5) -- (-2.5,.5);
\draw (0,1.245) -- (0,3);
\node at (-2.5,2) {$r_{34}$};
\node at (2.5,2) {$r_{12}$};
\draw[dashed,fill,gray,opacity=.1] (0,4.5) -- (2,3) -- (0,1.5) -- (-2,3) -- (0,4.5);
\node[gray] at (0,3.5) {$r_{23}$};
\draw[dashed] (0,4.5) -- (2,3) -- (0,1.5) -- (-2,3) -- (0,4.5);
\end{scope}

(1.5,2)
\end{tikzpicture}
    \caption{The fan structure of $\trop^+(\Flag_4)$ induced by the Gröbner fan of the Plücker ideal is dual to the associahedron on the left. The fan structure induced by the Gröbner fan of the extended ideal is dual to the associahedron on the right. Colored faces are on the \emph{back}.}
    \label{fig:trop+F4}
\end{figure}
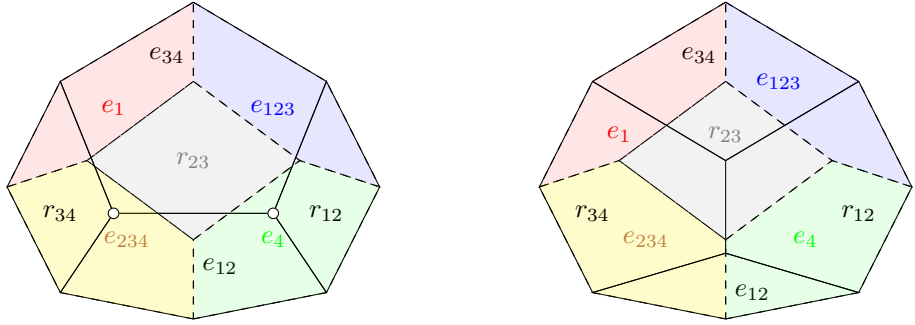
A complete list of all cones in terms of there rays generators is provided below:
\[
\begin{split}
\{e_1, r_{34}, e_{34}\}= C_0,  
\{e_1, r_{34}, e_{234}\}=C_1, 
\{e_1, e_{234}, r_{23}\}=C_3, 
\{e_1, e_{34}, e_{123}\}=C_8, 
\{e_1, r_{23}, e_{123}\}=C_{14}, \\
\{r_{34}, e_{34}, e_{12}\}=C_{17}^\dagger,  
\{r_{34}, e_{234}, e_{12}\}=C_{18},
\{e_{234}, r_{23}, e_4\}=C_{24},
\{e_{34}, e_{123}, r_{12}\}=C_{36}, 
\{r_{23}, e_{123}, e_4\}=C_{44}, \\
\{e_{34}, e_{12}, r_{12}\}=C_{51}^\dagger, 
\{e_{234}, e_{12}, e_4\}=C_{53}, 
\{e_{123}, r_{12}, e_4\}=C_{71},    
\{e_{12}, r_{12}, e_4\}=C_{77}. 
\end{split}
\]
The $C_i$'s are the notation from \cite{BLMM} as they can be found on the homepage \url{https://www.matem.unam.mx/~lara/tropflag/}.
All initial ideals associated with the maximal cones are generated by binomials, all but two are prime. The $\dagger$ in the above list indicated the non-prime cones.
In the associahedron the corresponding vertices are labeled by white circles.

\subsubsection{Changing the embedding}\label{sec:extended F4}
In \cite{BLMM} an algorithmic procedure is suggested to increment the number of maximal prime cones in the tropicalization of an ideal. 
The basic idea is to add generators to the polynomial ring with well-chosen relations that will provide the "missing elements" in non-prime initial ideals.
We apply their procedure and add a single generator $x$ to the polynomial ring $\mathbb R[p_J:J\subset [4]]$.
It corresponds to the only non-Plücker cluster variable of the cluster algebra.
The (exchange) relations between $x$ and the Plücker variables are
\[
\begin{split}
xp_{23}-p_{12}p_{234}p_{3}-p_{2}p_{34}p_{123},\
xp_{24}-p_{4}p_{12}p_{234}-p_{2}p_{34}p_{124}, \
xp_{13}-p_{3}p_{12}p_{134}-p_{1}p_{34}p_{123},\\
xp_{14}-p_{4}p_{12}p_{134}-p_{1}p_{34}p_{124}, \
p_{124}p_{3}-x-p_{4}p_{123},\
p_{2}p_{134}-x-p_{1}p_{234}.
\end{split}
\]
Let $I^{\rm ex}\subset \mathbb R[x,p_J:J\subset [4]]$ be the ideal generated by the Plücker relations together with the additional six relations above. 
We fix $\deg(x)=2,\deg(p_J)=1$ so that $I^{\rm ex}$ is homogeneous with respect to an $\mathbb N$-grading and verify that it is saturated.
In fact, only the relations of degree two are necessary generators of the ideal.
As $\mathbb R[x,p_J:J\subset[4]]/I^{\rm ex}\cong \mathbb R[p_J:J\subset[4]]/J_4$ the positive part of the tropicalization $\trop(I^{\rm ex})$ also induces a fan structure on $\trop^+(\Flag_4)$.
This Gröbner fan structure on $\trop(I^{\rm ex})$ is simplicial, contains a six dimensional linear subspace $\mathcal L_{I^{\rm ex}}$ and $\trop(I^{\rm ex})/\mathcal L_{I^{\rm ex}}$  has f-vector $(1, 25, 105, 105)$.
The natural projection $\mathbb R^{15}\to \mathbb R^{14}$ maps $\trop(I^{\rm ex})$ to $\trop(J_4)$.
We are interested in $\trop^+(I^{\rm ex})$ and its image under the projection.
Let $e_x,e_1,\dots,e_{234}$ denote the standard basis of $\mathbb R^{15}$.
Then the rays of $\trop^+(I^{\rm ex})$ are 
$e_1,\ e_{12},\ e_{123},\ e_4,\ e_{34},\ e_{234},\ r_{12}+e_x,\ r_{23},\ r_{34}+e_x$, where we slightly abuse notation and identifying $r_{ij}$ with their corresponding linear combination in the standard basis.
The projection from $\trop^+(I^{\rm ex})$ to $\trop^+(\Flag_4)$ however is \emph{not} a map of fans: while $\{r_{12},e_{12},e_{34}\}$ and $\{r_{34},e_{12},e_{34}\}$ are maximal cones in $\trop^+(J_4)$, in $\trop^+(I^{\rm ex})$ we find that the corresponding sets $\{r_{12},e_{12},r_{34}\}$ and $\{r_{34},e_{12},r_{34}\}$ generate maximal cones.
The complete list of cones is as follows
\[
\begin{split}
\{e_1, r_{34}, e_{34}\}= C_0,  
\{e_1, r_{34}, e_{234}\}=C_1, 
\{e_1, e_{234}, r_{23}\}=C_3, 
\{e_1, e_{34}, e_{123}\}=C_8, 
\{e_1, r_{23}, e_{123}\}=C_{14}, \\
\{r_{34}, e_{34}, r_{12}\}=\mathcal C_{17}^\dagger,  
\{r_{34}, e_{234}, e_{12}\}=C_{18},
\{e_{234}, r_{23}, e_4\}=C_{24},
\{e_{34}, e_{123}, r_{12}\}=C_{36}, 
\{r_{23}, e_{123}, e_4\}=C_{44}, \\
\{r_{34}, e_{12}, r_{12}\}=\mathcal C_{51}^\dagger, 
\{e_{234}, e_{12}, e_4\}=C_{53}, 
\{e_{123}, r_{12}, e_4\}=C_{71},    
\{e_{12}, r_{12}, e_4\}=C_{77}. 
\end{split}
\]
The rays of $\trop^+(I^{\rm ex})$ generate a unique maximal cone in $\GF(I^{\rm ex})$ whose initial monomial ideal is the Stanley--Reisner ideal of  $\trop^+(I^{\rm ex})$.
This observation fits the findings of \cite{BMN} where the authors study the interaction of the cluster structure of $\Gr(2,n)$ and $\Gr(3,6)$ and its relation to the Gröbner theory of defining ideals.
As in {\it loc.cit.} we obtain a bijection between rays of $\trop^+(I^{\rm ex})$ and mutable cluster variables:
$(p_{13},r_{23}),(p_{24},r_{34}),(x,r_{12}),(p_2,e_{12}),(p_{134},e_{234}),(p_{23},e_{123}),(p_{124},e_{34}),(p_3,e_4),(p_{14},e_1)$.
\subsection{FFLV degeneration}

We summarize the construction of the toric degeneration of the flag variety into the toric variety associated with the FFLV polytope.
Instead of relying on the original reference \cite{FFL_PBWtypeA}, we chose the language of valuations as introduced in \cite{FFL_birat}.

Consider the Lie algebra $\mathfrak{sl}_{n}$ and a Cartan subalgebra $\mathfrak h$ inducing a decomposition $\mathfrak{sl}_{n}=\mathfrak{n}\oplus \mathfrak h\oplus \mathfrak n^-$.
Let $\omega_1,\dots,\omega_{n-1}$ denote the fundamental weights and $R^+:=\{\epsilon_i-\epsilon_j: 1\le i<j\le n\}$ be the set of positive roots.
For a dominant integral weight $\lambda$ let $V_\lambda$ be the corresponding irreducible highest weight representation of $\mathfrak{sl}_{n}$.
Fix a sequence $\mathcal S=(\epsilon_{i_1}-\epsilon_{j_1},\dots,\epsilon_{i_N}-\epsilon_{j_N})$ containing all positive roots such that $\epsilon_i-\epsilon_j$ appears before $\epsilon_{i'}-\epsilon_{j'}$ whenever $j-i>j'-i'$.
Recall, that for every $\epsilon_i-\epsilon_j\in R^+$ we have an element $f_{i,j}\in \mathfrak n^-$.
A Pl\"ucker coordinate $p_J$ with $J=\{j_1,\dots,j_\ell\}$ is the basis element of $V_{\omega_\ell}^*=(\bigwedge^\ell k^n)^*$ dual to $e_{j_1}\wedge\dots\wedge e_{j_\ell}\in \bigwedge^\ell k^n$ (here $\{e_j\}_j$ denoted the standard basis of $k^n$).
The $f_{a,b}$ operate on $\bigwedge^\ell k^n$.
Let $\prec$ be the homogeneous right lexicographic order in $\mathbb Z^N$, where $N=\dim_k\Flag_n$.
To an element $a\in \mathbb Z^N$ we associate a monomial ${\bf f}_{\mathcal S}^{a}:=f_{i_1,j_1}^{a_1}\cdots f_{i_N,j_N}^{a_N}\in U(\mathfrak n^-)$ in the universal enveloping algebra of $\mathfrak n^-$.
For every Pl\"ucker coordinate $p_J$ with $J=\{j_1,\dots,j_\ell\}$ we define
\[
m_J:=\text{min}{}_{\prec}\{ a\in \mathbb Z^N : {\bf f}_S^a(e_1\wedge\dots \wedge e_\ell)=e_{j_1}\wedge\dots\wedge e_{j_\ell} \}.
\]
By \cite[\S13]{FFL_birat} this extends to a valuation $\hat\val_{\mathcal S}:k[SL_n/U]\to \mathbb Z^N\times \Lambda$ with $\hat\val_{\mathcal S}(p_J)=(m_J,\omega_\ell)$, where $U\subset SL_{n}$ is a maximal unipotent subgroup whose Lie algebra is $\mathfrak n$.
The Pl\"ucker coordinates form a Khovanskii basis for this valuation (see e.g. \cite{B-quasival}) and by \cite[Theorem 13.3]{FFL_birat} it induced a toric degeneration of the flag variety into the projective toric variety associated with the {\bf FFLV polytope}:
\[
P:=\text{conv}(m_J: J\subset[n]).
\]
The toric variety of $P$ can also be described as the vanishing set of an initial ideal of $I_n$. 
Set $K:=\binom{n}{1}+\dots +\binom{n}{n-1}$.
We define $M\in \mathbb Z^{N\times K}$ be the matrix with columns $\val(p_I), I\subset [n]$.
By \cite[Theorem 1]{B-quasival} we have $\init_M(I_n)=( \init_M(R^k_{L,J}): \forall k,J,L )=:I_{\rm FFLV}$
and the toric variety defined by $P$ is the normalization of $V(\init_M(I_n))$.
Let $\{e_{1},\dots,e_N\}$ be the standard basis of $\mathbb Z^N$ where $e_p$ corresponds to $\epsilon_{i_p}-\epsilon_{j_p}$ in the sequence $\mathcal S$.
Fang, Fourier and Reineke define a {\bf degree map} $\deg:\mathbb Z^N\to \mathbb Z$ by $e_p\mapsto (j_p-i_p+1)(n-j_p+i_p)$ \cite{FFR15}.
This map can be used to project $M$ to a weight vector in $\mathbb Z^N$ by applying $\deg$ to its columns in the sense of \cite[Lemma 3]{B-quasival}.
Therefore, $I_{\rm FFLV}$ is the initial ideal with respect to a unique maximal prime cone in $\trop(\Flag_n)$ which we call $C_{\rm FFLV}$. 
Explicit inequalities for $C_{\rm FFLV}$ are available in \cite{FFFM_PBW_tropFlag}.

\begin{Lemma}\label{lem:val FFLV pluecker}
Let $L=\{l_1,\dots,l_k\}\subset [n]$ and assume $l_1<\dots<l_s\le k< l_{s+1}<\dots <l_k$. 
Further, let $\{p_{s+1}<\dots <p_{k}\}=[k]\setminus \{l_1,\dots,l_s\}$ and $S=(\epsilon_{i_1}-\epsilon_{j_1},\dots,\epsilon_{i_N}-\epsilon_{j_N})$ be a sequence with good ordering. Then for $1\le r\le N$ and $s+1\le t\le k$ we have
\[
(m_L)_{i_r,j_r}=\left\{\begin{matrix}
1 &\text{ if } (i_r,j_r)=(p_t,l_t) \text{ for some } s+1\le t\le k\\
0 &\text{ otherwise}. 
\end{matrix}\right.
\]
\end{Lemma}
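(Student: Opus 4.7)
The plan is to exhibit the candidate exponent $a^* \in \mathbb Z_{\ge 0}^N$ with $a^*_r = 1$ precisely when $(i_r, j_r) = (p_t, l_t)$ for some $t \in \{s+1, \dots, k\}$, and then to verify both that it realizes the required identity and that it is $\prec$-minimal.

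For validity, observe that for distinct $t, t' \in \{s+1, \dots, k\}$ the operators $f_{p_t, l_t}$ and $f_{p_{t'}, l_{t'}}$ commute on $\bigwedge^k \mathbb{R}^n$: their upper indices lie in $[k]$ while their lower indices exceed $k$, so no chain of substitutions can occur. Thus $\mathbf{f}_\mathcal{S}^{a^*}$ acts on $e_1 \wedge \dots \wedge e_k$ independently of the ordering imposed by $\mathcal S$, simultaneously replacing each $e_{p_t}$ by $e_{l_t}$. Since $L \cap [k] = \{l_1, \dots, l_s\}$ stays fixed and $[k] \setminus L = \{p_{s+1}, \dots, p_k\}$ is mapped onto $L \setminus [k] = \{l_{s+1}, \dots, l_k\}$, the resulting wedge has factor set $\{l_1, \dots, l_k\}$, and a direct permutation-sign computation yields $\mathbf{f}_\mathcal{S}^{a^*}(e_1 \wedge \dots \wedge e_k) = e_{l_1} \wedge \dots \wedge e_{l_k}$.

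For minimality, I first compare total degrees. Taking $\mathfrak h$-weights on both sides of $\mathbf{f}^{a}(e_1 \wedge \dots \wedge e_k) = e_{l_1} \wedge \dots \wedge e_{l_k}$ forces $\sum_r a_r(\epsilon_{j_r} - \epsilon_{i_r}) = \sum_{t=s+1}^k (\epsilon_{l_t} - \epsilon_{p_t})$, a flow identity on $[n]$ with sources $\{p_{s+1}, \dots, p_k\}$ and sinks $\{l_{s+1}, \dots, l_k\}$. The number of edges counted with multiplicity is at least $k - s$, and since $\prec$ is homogeneous (lower total degree ranks first), only degree-$(k-s)$ exponents compete for the minimum; these are exactly the $a$'s supported on direct edges realizing a bijection $\sigma: \{p_{s+1}, \dots, p_k\} \to \{l_{s+1}, \dots, l_k\}$.

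Finally, I distinguish the bijective candidates by induction on the number of inversions of $\sigma$ relative to the identity. Given $\sigma \ne \mathrm{id}$, pick $t$ with $\sigma(t) > \sigma(t+1)$ and let $\sigma'$ be the result of swapping these two values. The four lengths at which $a_\sigma$ and $a_{\sigma'}$ differ are $l_{\sigma(t)} - p_t$, $l_{\sigma(t+1)} - p_{t+1}$, $l_{\sigma(t+1)} - p_t$, and $l_{\sigma(t)} - p_{t+1}$; since $\sigma(t) > \sigma(t+1)$ and $p_t < p_{t+1}$, the strictly shortest among them is $l_{\sigma(t+1)} - p_{t+1}$, a root appearing in $a_\sigma$ but not in $a_{\sigma'}$. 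Because the good ordering on $\mathcal{S}$ places shorter roots on the right, this is the rightmost position at which $a_\sigma$ and $a_{\sigma'}$ differ, so $a_{\sigma'} \prec a_\sigma$ in right lex. Iterating gives $a^* = a_{\mathrm{id}} \preceq a_\sigma$ for every bijective $\sigma$. The main obstacle is reconciling the sign computation in the validity step with the implicit sign convention in the definition of $m_L$; a clean resolution is to invoke the fact that $\{\mathbf{f}^a v_{\omega_k}\}$ over FFLV patterns $a$ forms a basis of $V_{\omega_k}$, forcing the canonical pattern to be the unique PBW-type witness reaching $e_{l_1} \wedge \dots \wedge e_{l_k}$ with the correct sign.
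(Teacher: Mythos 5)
Your proof is correct and follows the same basic strategy as the paper's: use the homogeneity of $\prec$ to reduce to exponent vectors of minimal total degree (which are exactly perfect matchings from sources $\{p_{s+1},\dots,p_k\}$ to sinks $\{l_{s+1},\dots,l_k\}$), then use the good ordering of $\mathcal S$ together with the right-lexicographic comparison to pin down the monotone matching. You fill in substantially more detail than the paper, which only gestures at each step: you verify the commutativity of the operators $f_{p_t,l_t}$ (so the ordering imposed by $\mathcal S$ is harmless), derive the degree lower bound $k-s$ from an $\mathfrak h$-weight comparison, and carry out the right-lex comparison via an explicit inversion-reducing induction on adjacent transpositions. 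The one soft spot is the sign discussion at the end: you correctly notice that applying the commuting lowering operators and then sorting can introduce a sign $(-1)^{\epsilon}$, which the paper's literal definition of $m_L$ does not visibly accommodate, but your appeal to ``FFLV patterns forming a basis'' does not by itself force the sign to come out $+1$ -- it only shows uniqueness of the degree-$(k-s)$ witness, not that the witness lands on $+e_{l_1}\wedge\dots\wedge e_{l_k}$ rather than $-e_{l_1}\wedge\dots\wedge e_{l_k}$. The cleaner resolution is that the condition in the definition of $m_L$ should be read ``up to nonzero scalar'' (equivalently, that $p_L$ is nonvanishing on $\mathbf f_{\mathcal S}^a(e_1\wedge\dots\wedge e_k)$), as is standard in the FFL birational-sequences framework; once read that way, your validity step is exact and your minimality argument is complete. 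The paper itself silently ignores this sign issue, so you are being more careful, not less.
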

\begin{proof}
In the definition of $m_L$, note that every operator $f_{i_r,j_r}$ for which $(m_L)_{i_r,j_r}$ is non-zero acts on one exactly one of the $e_{l_i}$'s.
In particular, $m_L$ encodes which $e_{l_i}$ get send to which $e_i$ for $1\le i\le k$.
As $\prec$ is an homogeneous order, every $m_J$ has as few non-zero entries as possible.
Hence, if $l_i\le k$ the corresponding $e_{l_i}$ gets send to itself.
The good ordering of $S$ in combination with the right lexicographic order implies that if $l_t<l_q$ then $e_{l_t}$ gets send to $e_{i_t}$ and $e_{l_q}$ to $e_{i_q}$ with $1\le i_t<i_q\le k$.
\end{proof}

\begin{Corollary}\label{cor:3-term initial}
For $[I]\subset [n]$ with $\vert I\vert=j-2$ and $1\le i<j<k\le n$ we have
\[
\init_M(R_{i,j,k}^J) = p_{\{i\}\cup J}p_{\{j,k\}\cup J} + p_{\{k\}\cup J}p_{\{i,j\}\cup J}.
\]
\end{Corollary}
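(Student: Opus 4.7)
The plan is to compute the three weight vectors $m_{L_1}+m_{L_2}$, $m_{L_3}+m_{L_4}$, $m_{L_5}+m_{L_6}$ in $\mathbb{Z}^N$ directly from Lemma~\ref{lem:val FFLV pluecker}, where $L_1=\{i\}\cup J$, $L_2=\{j,k\}\cup J$, $L_3=\{j\}\cup J$, $L_4=\{i,k\}\cup J$, $L_5=\{k\}\cup J$, $L_6=\{i,j\}\cup J$. The goal is to show the outer two sums coincide as vectors in $\mathbb{Z}^N$, while the middle sum is distinct and lies strictly below (or above, depending on the order convention) under $\prec$.

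The cleanest route exploits a pairing observation: I would first establish the equalities $m_{L_2}=m_{L_5}$ and $m_{L_1}=m_{L_6}$, which immediately yield $m_{L_1}+m_{L_2}=m_{L_5}+m_{L_6}$. To see $m_{L_2}=m_{L_5}$, note that $L_2=L_5\cup\{j\}$ with $|L_2|=j$ and $|L_5|=j-1$. Since $j\notin J$, the set of elements of $L_5$ exceeding $|L_5|=j-1$ equals $\{k\}\cup(J\cap\{j+1,\dots,n\})$, and the set of elements of $L_2$ exceeding $|L_2|=j$ is the same set. Meanwhile the missing positions $[j-1]\setminus L_5$ and $[j]\setminus L_2$ both reduce to $[j-1]\setminus(J\cap[j-1])$. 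Hence the pairing from Lemma~\ref{lem:val FFLV pluecker} is identical for $L_2$ and $L_5$, so $m_{L_2}=m_{L_5}$. The same reasoning with $i$ in place of $k$ gives $m_{L_1}=m_{L_6}$.

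To distinguish $m_{L_3}+m_{L_4}$, I would set $a=|J\cap[j-1]|$, $b=|J\cap\{j+1,\dots,n\}|=j-2-a$, and count supports. A bookkeeping computation via Lemma~\ref{lem:val FFLV pluecker} shows that $m_{L_1}+m_{L_2}=m_{L_5}+m_{L_6}$ has exactly $2b+1$ nonzero coordinates, whereas $m_{L_3}+m_{L_4}$ has $2b+2$ nonzero coordinates. Geometrically, the extra entry arises because $L_3$ places $j$ in the excess relative to $[j-1]$, producing a pair $(\gamma_1,j)$ that splits a longer pair $(\gamma_1,k)$ (or similar) present in $m_{L_5}+m_{L_6}$ into two shorter roots meeting at $j$; in the root lattice the two sums coincide, but in $\mathbb{Z}^N$ they differ by a root-splitting relation $e_{(\gamma_1,j)}+e_{(j,k)}-e_{(\gamma_1,k)}$ (modulo reshuffling). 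Under the homogeneous right lexicographic order $\prec$ with the convention making the FFLV basis minimal, this extra splitting places $m_{L_3}+m_{L_4}$ strictly off the $\prec$-extremum achieved by the equal pair $m_{L_1}+m_{L_2}=m_{L_5}+m_{L_6}$. Since $p_{L_1}p_{L_2}$ and $p_{L_5}p_{L_6}$ both appear with coefficient $+1$ in $R^J_{i,j,k}$, the claimed formula follows.

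The main obstacle is the case analysis when computing $m_{L_4}$: one must track where $k$ sits among the sorted elements of $J\cap\{j+1,\dots,n\}$, giving subcases (with the convention $\mu_{b+1}=j$). The equalities $m_{L_2}=m_{L_5}$ and $m_{L_1}=m_{L_6}$ hold uniformly across these cases, so the key identity $m_{L_1}+m_{L_2}=m_{L_5}+m_{L_6}$ requires no case split, but the comparison with $m_{L_3}+m_{L_4}$ and the matching of the $\prec$-direction with the sign convention of $\init_M$ does require care.
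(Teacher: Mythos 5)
Your core approach matches the paper's: both reduce the claim to the identity $m_{\{i\}\cup J}+m_{\{j,k\}\cup J}=m_{\{k\}\cup J}+m_{\{i,j\}\cup J}$ via Lemma~\ref{lem:val FFLV pluecker}. Your stronger pairwise equalities $m_{L_2}=m_{L_5}$ and $m_{L_1}=m_{L_6}$ are correct (the excess sets and missing-position sets literally coincide, as you argue) and give a cleaner derivation of the sum equality than what the paper leaves implicit. Where you diverge is the final step: you try to compare $m_{L_3}+m_{L_4}$ directly against the common outer weight via a degree/support count plus a $\prec$-direction argument, and you rightly flag that matching the $\prec$-direction with the $\init_M$ sign convention ``requires care.'' The paper sidesteps this entirely: it is already established that $I_{\rm FFLV}=\init_M(I_n)$ is prime (hence contains no monomials), so $\init_M(R^J_{i,j,k})$ cannot reduce to the middle term alone; once you have the outer two at equal $M$-weight and your degree count $2b+1\ne 2b+2$ ruling out the trinomial case, the binomial conclusion is forced with no reference to the orientation of $\prec$. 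I would therefore keep your pairing argument for the equality, keep your degree count to exclude the all-equal case, but replace the $\prec$-bookkeeping at the end with the primality observation; that closes the only soft spot in your write-up and aligns it with the paper's (terser) logic.
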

\begin{proof}
It suffices to verify that $m_{\{i\}\cup J}+m_{\{j,k\}\cup J}=m_{\{k\}\cup J}+m_{\{i,j\}\cup J}$ which is a consequence of Lemma~\ref{lem:val FFLV pluecker}.
\end{proof}

\begin{Corollary}\label{cor:3-term initials ijk}
Let $1\le i<j<k\le n$ and recall the relations defined in \eqref{eq:3 term rel}. Then for $J=\varnothing$
\[
\init_M(R_{i,j,k})=\left\{\begin{matrix}
p_{i}p_{jk}+p_kp_{ij} & \text{ if } i=1,j=2\\
p_{i}p_{jk}-p_{j}p_{ik} &\text{ if } i=1,j\not =2 \text{ or } i>2\\
-p_jp_{ik}+p_kp_{ij} & \text{ if } $i=2$.
\end{matrix}\right.
\]
\end{Corollary}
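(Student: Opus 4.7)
The plan is to prove the corollary by substituting into Lemma~\ref{lem:val FFLV pluecker} and comparing the three resulting $\mathbb{Z}^N$-vectors in the homogeneous right lex order. Since each of the three monomials in $R_{i,j,k}$ has the same $\Lambda$-multidegree $\omega_1+\omega_2$, $\init_M(R_{i,j,k})$ is determined by comparing
\[
A:=m_{\{i\}}+m_{\{j,k\}}, \qquad B:=m_{\{j\}}+m_{\{i,k\}}, \qquad C:=m_{\{k\}}+m_{\{i,j\}}.
\]
The case $i=1,\, j=2$ is immediate from Corollary~\ref{cor:3-term initial} with $J=\varnothing$; moreover, that corollary pins down the sign convention of the order, namely that strictly smaller total degree is maximal (there the degree-$1$ vector $e_{1,k}$ beats the degree-$2$ vector $e_{1,2}+e_{2,k}$).

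For $i=2$ (so $j,k\ge 3$), Lemma~\ref{lem:val FFLV pluecker} yields $A=e_{1,2}+e_{1,j}+e_{2,k}$ of total degree $3$ and $B=C=e_{1,j}+e_{1,k}$ of total degree $2$. By the sign convention above, $B$ and $C$ tie for the maximum, so $\init_M(R_{2,j,k})=-p_j p_{2k}+p_k p_{2j}$.

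For the combined case $i=1,\, j\ge 3$ or $i\ge 3$, the three sums all have the same total degree, with $A=B$ and $C$ differing; after cancelling the shared terms the comparison reduces to $e_{1,j}+e_{2,k}$ against $e_{1,k}+e_{2,j}$. The tie-break uses the good ordering of $\mathcal{S}$: roots of smaller difference sit further to the right in $\mathcal{S}$, and $j-2$ is the unique minimum of $\{j-1,k-1,k-2,j-2\}$, so $(2,j)$ is the strictly rightmost position at which the two vectors differ. At this coordinate $e_{1,j}+e_{2,k}$ is $0$ and $e_{1,k}+e_{2,j}$ is $1$; the right lex tie-break (with the sign convention already fixed) then makes the former maximal, giving $\init_M(R_{i,j,k})=p_ip_{jk}-p_jp_{ik}$.

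The main obstacle I anticipate is pinning down the direction of the right lex step, since the sign convention of the order is used only implicitly in Corollary~\ref{cor:3-term initial}; once that convention is made explicit, the rest is direct substitution into Lemma~\ref{lem:val FFLV pluecker}.
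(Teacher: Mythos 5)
The paper itself states this Corollary without proof (it follows Corollary~\ref{cor:3-term initial} with no \texttt{proof} environment), so your proposal is filling a gap rather than paralleling an existing argument. Your strategy---substitute into Lemma~\ref{lem:val FFLV pluecker}, and compare the resulting vectors $A$, $B$, $C$ in the homogeneous right lex order---is the natural one, and I have checked that every case-by-case computation is correct: for $i=2$ you get $A=e_{1,2}+e_{1,j}+e_{2,k}$ and $B=C=e_{1,j}+e_{1,k}$; for $i=1,\ j\ge 3$ you get $A=B=e_{1,j}+e_{2,k}$ and $C=e_{1,k}+e_{2,j}$; for $i\ge 3$ the comparison reduces to the same pair after cancelling $e_{1,i}$. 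The case split is exhaustive, and your inference that ``smaller total degree is maximal'' from the $i=1,j=2$ instance of Corollary~\ref{cor:3-term initial} (where $A=C=e_{1,k}$ of degree $1$ beats $B=e_{1,2}+e_{2,k}$ of degree $2$) is correct.

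The genuine soft spot is the one you yourself flag: you establish the degree direction empirically, but then simply assert the direction of the right lex tie-break when degrees agree. These are two independent pieces of data about the order $\prec$, and the degree convention alone does not determine the lex convention. To close this cleanly, use the proof of Lemma~\ref{lem:val FFLV pluecker} itself: it pins down both directions simultaneously. For instance, the lemma forces $m_{\{3,4\}}=e_{1,3}+e_{2,4}$ rather than the competing assignment $e_{2,3}+e_{1,4}$; both candidates have degree $2$, their rightmost differing position in $\mathcal S$ is $(2,3)$ (difference $1$ is uniquely minimal among $\{2,2,3,1\}$), and the selected minimum has entry $0$ there while the rejected one has entry $1$. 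Thus $\prec$ declares the vector with the \emph{smaller} entry at the rightmost differing position to be $\prec$-smaller, and since you have already established from Corollary~\ref{cor:3-term initial} that $\init_M$ picks the $\prec$-minimum (smaller degree wins), the same rule applies to the tie-break: $A=B$ with entry $0$ at $(2,j)$ beats $C$ with entry $1$. This completes the argument. One caveat to be aware of when cross-checking against Example~\ref{exp:initial forms for proof}: the displayed form $\init_M(R_{2,4,5}^1)=p_{12}p_{145}+p_{15}p_{124}$ there is inconsistent with both this computation and with the form $p_{12}p_{145}-p_{14}p_{125}$ actually used in the proof of Theorem~\ref{thm:FFLV not pos}; the Example appears to contain a typo, so do not let that particular line undermine your confidence in the convention you extracted.
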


\begin{Example}\label{exp:initial forms for proof}
Let $n\ge 5$. Then by Corollary~\ref{cor:3-term initials ijk} we have the following initial forms in $I_{\rm FFLV}$:
\begin{eqnarray*}
\init_M(R_{1,2,3}) &=& p_1p_{23}+p_3p_{12}, \quad
\init_M(R_{1,2,4}) = p_1p_{24}+p_4p_{12}, \\
\init_M(R_{1,3,4}) &=& p_1p_{34}-p_3p_{14}, \quad
\init_M(R_{1,4,5}) = p_1p_{45}-p_4p_{15}.
\end{eqnarray*}
Notice that the first initial forms imply that $I_{\rm FFLV}$ is not totally positive, hence it does not correspond to a maximal prime cone in $\trop^+(\Flag_n)$.
By Corollary~\ref{cor:3-term initial} and Lemma~\ref{lem:val FFLV pluecker} we get $\init_M(R_{2,3,4}^1) = p_{12}p_{134}+p_{14}p_{123}$,
$\init_M(R_{2,4,5}^1) = p_{12}p_{145}+p_{15}p_{124}$ and $\init_M(R_{1,2,3}^4) = p_{14}p_{234}-p_{24}p_{134}$.
\end{Example}

The main result of this section can now be stated as follows.

\begin{Theorem}\label{thm:FFLV not pos}
For $n\ge 5$ the symmetric group orbit of the maximal prime cone $C_{\rm FFLV}\subset \trop(\Flag_n)$ does not intersect the totally positive part $\trop^+(\Flag_n)$.
Said differently, there is no $\sigma\in S_n$ such that $\sigma(I_{FFLV})$ is totally positive.
\end{Theorem}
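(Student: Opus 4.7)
The plan is to show, for every $\sigma\in S_n$ with $n\ge 5$, that $\sigma(I_{\rm FFLV})$ contains a nonzero polynomial all of whose coefficients have the same sign. By Proposition~\ref{prop:tot pos and vanishing} this forces $\sigma(I_{\rm FFLV})$ to fail total positivity, so $\sigma(C_{\rm FFLV})$ cannot meet $\trop^+(\Flag_n)$.

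The witnesses are the positive binomials supplied by Corollary~\ref{cor:3-term initial}: for every triple $i<j<k$ in $[n]$ and every $J\subset[n]\setminus\{i,j,k\}$ with $|J|=j-2$,
\[
b^J_{i,j,k}\;:=\;p_{\{i\}\cup J}\,p_{\{j,k\}\cup J}\;+\;p_{\{k\}\cup J}\,p_{\{i,j\}\cup J}\;\in\;I_{\rm FFLV}.
\]
Using the signed symmetric-group action $\sigma(p_X)=(-1)^{\text{sgn}(X,\sigma)}p_{\sigma(X)}$, a direct computation shows that $\sigma(b^J_{i,j,k})$ remains a same-sign binomial (and hence, up to an overall factor of $\pm 1$, an all-positive element of $\sigma(I_{\rm FFLV})$) precisely when
\[
\text{sgn}(\{i\}\!\cup\! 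J,\sigma)+\text{sgn}(\{j,k\}\!\cup\! J,\sigma)+\text{sgn}(\{k\}\!\cup\! J,\sigma)+\text{sgn}(\{i,j\}\!\cup\! J,\sigma)\equiv 0\pmod 2.
\]

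I would then cascade through a case analysis. For the baseline family $b^{\varnothing}_{1,2,k}$, $k=3,\dots,n$, the criterion simplifies to the elementary statement \emph{``$\sigma(2)$ lies strictly between $\sigma(1)$ and $\sigma(k)$''}; demanding mixed-sign for every such $k$ forces $\sigma(2)\in\{1,n\}$. In the sub-case $\sigma(2)=1$ (the case $\sigma(2)=n$ being analogous via the order-reversal symmetry of the family), applying the parity criterion to $b^{\{1\}}_{2,3,k}=p_{12}\,p_{\{1,3,k\}}+p_{\{1,k\}}\,p_{\{1,2,3\}}$ for $k=4,\dots,n$ (which belong to $I_{\rm FFLV}$ since here $|J|=1=j-2$) reduces preservation to ``$\sigma(3)<\sigma(k)$''. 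Requiring mixed-sign for every such $k$ forces $\sigma(3)$ to be the maximum of $\{\sigma(3),\dots,\sigma(n)\}=\{2,\dots,n\}\setminus\{\sigma(1)\}$. Iterating with binomials $b^J_{i,j,k}$ in which $J$ absorbs the previously pinned indices---always choosing $|J|=j-2$ so that Corollary~\ref{cor:3-term initial} applies---each stage pins $\sigma(j)$ to an extreme of the remaining values. For $n\ge 5$ the accumulated constraints by the time one reaches $\sigma(4)$ are inconsistent with $\sigma$ being a permutation, producing the required contradiction.

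The main obstacle is keeping the combinatorial bookkeeping in this iteration under control. At each stage one must (i) exhibit a subset $J$ of the prescribed cardinality $j-2$ containing the previously fixed indices, so that Corollary~\ref{cor:3-term initial} produces a bona fide positive binomial in $I_{\rm FFLV}$, and (ii) carry out the parity computation explicitly to verify that the newly imposed condition genuinely sharpens the constraint on $\sigma$ rather than restating an earlier one. The threshold $n\ge 5$ enters precisely here: only then are there enough ``free'' indices for the cascade to run to a contradiction. This is consistent with the explicit computations in \S\ref{sec:extended F4}, which show that for $n=4$ the supply of positive binomials in $I_{\rm FFLV}$ is too small to eliminate every element of the $S_4$-orbit of $C_{\rm FFLV}$.
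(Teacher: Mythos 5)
Your overall strategy---locate an element of $\sigma(I_{\rm FFLV})$ with all coefficients of the same sign and invoke Proposition~\ref{prop:tot pos and vanishing}---is the same as the paper's. But the execution diverges in two substantial ways, and the divergence is precisely where the paper does its real work, so what you have is an incomplete sketch rather than a proof.

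First, you restrict your pool of witnesses to the positive binomials $b^J_{i,j,k}$ of Corollary~\ref{cor:3-term initial}, which requires $|J|=j-2$. The paper uses a strictly larger supply of initial forms. In the two ``cascading'' cases $\sigma((1,2,3))=(j,i,k)$ and $(j,k,i)$ (i.e., $\sigma(2)$ extreme, exactly the $\sigma$ that survive your stage~1), the paper extracts intermediate constraints from \emph{mixed-sign} binomials such as $\init_M(R_{1,3,4})=p_1p_{34}-p_3p_{14}$ and $\init_M(R_{1,4,5})=p_1p_{45}-p_4p_{15}$: requiring that $\sigma$ keep these mixed-sign forces inequalities like $j>x$ and $y<x$ which, combined with the positive-binomial constraints, eventually become incompatible. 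Your family of witnesses never produces a direct constraint between $\sigma(1)$ and $\sigma(4)$ at stage~2 (as you can verify from the parity computation, $b^{\{1\}}_{2,3,k}$ constrains only $\sigma(3)$ versus $\sigma(k)$ once $\sigma(2)$ is pinned), so it is not clear that your cascade closes the same loop.

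Second, and more seriously, your claimed endpoint is unjustified. You write that ``each stage pins $\sigma(j)$ to an extreme of the remaining values'' and that ``by the time one reaches $\sigma(4)$'' the constraints are ``inconsistent with $\sigma$ being a permutation.'' But pinning each $\sigma(j)$ to an extreme of the remaining values is perfectly consistent with being a permutation (alternating extremes always produces one). A contradiction requires two conflicting inequalities on the \emph{same} pair $(\sigma(a),\sigma(b))$, and you have not exhibited which pair of binomials produces that conflict, nor verified it by the parity computation you yourself identify as the ``main obstacle.'' In the paper this step is explicit: in case $(j,i,k)$, the chain $i<x<j<k$, then $i<y<j<k$, then $y<x$ (from $R_{1,4,5}$), feeds into a final positive-binomial witness that is provably all-positive. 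Until you name the analogous terminal witness in your framework and check it, the argument is a plan, not a proof. The paper's structure---six cases on $\sigma((1,2,3))$, each closed with an explicitly computed element of $\sigma(I_{\rm FFLV})$---is what you would need to reproduce or replace.
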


\begin{Example}
For $n=4$ the theorem is false. Below on the left is the FFLV-ideal and on the right its image after applying the permutation $\sigma=(1342)\in S_4$ which is totally positive:
\begin{eqnarray*}
\left(
\begin{matrix}
p_{24}p_{134} - p_{14}p_{234}, & p_{14}p_{123} + p_{12}p_{134},\\
p_{4}p_{23} - p_{3}p_{24}, &p_{3}p_{12} + p_{1}p_{23},\\
p_{13}p_{24} - p_{12}p_{34} &p_{34}p_{123} + p_{13}p_{2},\\
p_{4}p_{13} + p_{1}p_{34}, &p_{24}p_{123} + p_{12}p_{234},\\
p_{4}p_{12} + p_{1}p_{24}, &p_{4}p_{123} - p_{1}p_{234}.
\end{matrix}\right),
\left(
\begin{matrix}
p_{34}p_{123} - p_{23}p_{134}, & p_{24}p_{123} - p_{23}p_{124}, \\
p_{3}p_{14} - p_{1}p_{34}, &p_{2}p_{14} - p_{1}p_{24}, \\
p_{13}p_{24} - p_{12}p_{34}, &p_{13}p_{124} - p_{12}p_{134}, \\
p_{3}p_{12} - p_{2}p_{13}, &p_{34}p_{124} - p_{24}p_{134}, \\
p_{3}p_{24} - p_{2}p_{34}, &p_{3}p_{124} - p_{2}p_{134}.
\end{matrix}\right)
.
\end{eqnarray*}
\end{Example}

\begin{proof}[Proof of Theorem~\ref{thm:FFLV not pos}]
Assume there exists $\sigma\in S_n$ such that $\sigma(I_{FFLV})$ is totally positive. 
There are six cases for the the image of the sequence $(1,2,3)$ under $\sigma$ that we distinguish. 
In each case we show that $\sigma(I_{\rm FFLV})$ contains a nonzero element of $\mathbb R_{\ge 0}[p_J:J\subset [n]]$ which leads to a contradiction.
We will use the initial forms computed in Example~\ref{exp:initial forms for proof}. 
Let $1\le i<j<k\le n$, denote $\sigma(4)=x$ and $\sigma(5)=y$.
\begin{itemize}
    \item[$\sigma((1,2,3))=(i,j,k)$] In this case 
    $\sigma(p_{3}p_{12}+p_1p_{23})=p_{k}p_{ij}+p_ip_{kj}\in \sigma(I_{\rm FFLV})$,
    a contradiction.
    \item[$\sigma((1,2,3))=(i,k,j)$] Then $\sigma(p_{24}p_{134}-p_{14}p_{234}) = \pm (p_{kx}p_{ijx} + p_{ix}p_{jkx}) \in \sigma(I_{\rm FFLV})$,
    a contradiction.
    \item[$\sigma((1,2,3))=(j,i,k)$] Compute $\sigma(p_1p_{24}+p_4p_{12})=p_jp_{ix}-p_xp_{ij}$, which implies $i<x$. Further, $\sigma(p_1p_{34}-p_3p_{14})=p_jp_{kx}-p_kp_{jx}$ implies either $k<x$ or $j>x$. But if $k<x$ then $\sigma(p_{14}p_{123}+p_{12}p_{134})= -p_{jx}p_{ijk}-p_{ij}p_{jkx}$, a contradiction. So we have $i<x<j<k$. Repeating the computation for $5$ insted of $4$ we get $i<y<j<k$.
    Next we consider $\sigma(p_1p_{45}-p_4p_{15})=-p_jp_{yx}+p_xp_{yj}$, which implies $y<x$. However, in this case $\sigma(p_{12}p_{145}-p_{14}p_{125})= p_{ij}p_{yxj}+p_{xj}p_{iyj}$, a contradiction.
    \item[$\sigma((1,2,3))=(j,k,i)$] We compute $\sigma(p_1p_{24}+p_4p_{12})=-p_jp_{kx}+p_xp_{jk}$ which implies $x<k$. Then $\sigma(p_1p_{34}-p_3p_{14})=p_jp_{ix}-p_ip_{jx}$ so we need either $j<x$ or $i>x$ for the assumption to hold.
    But if $i<x$ then $\sigma(p_{14}p_{123}+p_{12}p_{134})=-p_{xj}p_{ijk}-p_{jk}p_{xij}$, a contradiction. So $i<j<x<k$. Similarly, repeating the computations for $5$ instead of $4$ we obtain $i<j<y<k$.
    Next consider $\sigma(p_1p_{45}+p_4p_{15})=-p_jp_{yx}+p_xp_{iy}$ which implies $y<x$. However in this case we have $\sigma(p_3p_{45}-p_4p_{35})=-p_ip_{yx}-p_xp_{iy}$, a contradiction.
    \item[$\sigma((1,2,3))=(k,i,j)$] We have $\sigma(-p_{24}p_{134}+p_{14}p_{234})=\pm(p_{ix}p_{jkx} + p_{kx}p_{ijx}) \in \sigma(I_{\rm FFLV})$,
    a contradiction.          
    \item[$\sigma((1,2,3))=(k,j,i)$] In this case $\sigma(p_{3}p_{12}+p_1p_{23})=-p_{i}p_{jk}-p_kp_{ij}\in \sigma(I_{\rm FFLV})$,
    a contradiction.
\end{itemize}
\end{proof}

Combining Theorem~\ref{thm:FFLV not pos} and Theorem~\ref{thm:main} an immediate consequence is Corollary~\ref{cor:FFLV not gv sym}.

\footnotesize{
\newcommand{\etalchar}[1]{$^{#1}$}

}

\end{document}